\newtheorem{theorem}{Theorem}[section]
\newtheorem{lemma}{Lemma}[section]
\newtheorem{example}{Example}[section]
\newtheorem{definition}{Definition}[section]
\newtheorem{remark}{Remark}[section]
\newtheorem{proposition}{Proposition}[section]
\newcommand{\beq}{\begin{equation}}
\newcommand{\eeq}{\end{equation}}
\newcommand{\beqn}{\begin{eqnarray}}
\newcommand{\eeqn}{\end{eqnarray}}
\def\to{{\rightarrow}}
\begin{document}

\allowdisplaybreaks
\title{Weyl double-measure pseudo almost automorphic functions  and  Weyl double-measure pseudo almost automorphic solutions to a semilinear abstract differential equations\thanks{This work is supported by the National Natural Science Foundation of China
under Grant No. 12261098.}
}
\author{Yongkun Li\thanks{ Email: yklie@ynu.edu.cn.}\\
 $^b$Department of Mathematics, Yunnan University,\\
 Kunming, Yunnan 650091,\\
 People's Republic of China}
\date{}
\maketitle{}
\begin{abstract}
This paper first propose a concept of Weyl double-measure pseudo-almost automorphic functions and examines their fundamental characteristics. Subsequently, employing fixed point theorems, we systematically investigate the existence and uniqueness of both Weyl almost automorphic solutions and Weyl double-measure pseudo-almost automorphic solutions for a class of semilinear abstract differential equations. Finally, through the application of inequality-based analytical methods, we establish the global exponential stability of these solutions.
\end{abstract}
\textbf{Keywords:} Weyl almost automorphic functions;  Weyl pseudo almost automorphic functions;  Weyl almost automorphic solution; Weyl pseudo almost automorphic solution; global exponential stability.

\textbf{2020 Mathematics Subject Classification:} 34C27, 35B15.

\section{ Introduction }
\setcounter{equation}{0}
 \indent

Almost automorphic functions, as a natural generalization of Bohr almost periodic functions\cite{bo}, were introduced by Bocher in 1955 during his investigation of a differential geometry problem \cite{bc}. Since then, the existence problem of almost automorphic solutions for differential equations and dynamical systems has become one of the central topics in these research fields. Concurrently, various extensions of almost automorphic functions have been continuously proposed, including Stepanov almost automorphic functions \cite{zs1}, Weyl almost automorphic functions \cite{weyl1}, and Besicovitch almost automorphic functions \cite{kos1,kos2}. Following Zhang's groundbreaking introduction of Bohr-type pseudo almost periodic functions \cite{zhang1,zhang2}, mathematical research has witnessed an influx of pseudo almost periodic and pseudo almost automorphic function concepts under different frameworks. Notably, the study of pseudo almost automorphy in differential equations \cite{xiao} and their applications to realistic mathematical models has emerged as a significant research focus \cite{ddd1,w1,w2,w3,w4,w5,w6}. In the realm of pseudo almost automorphic functions, numerous variants have been developed, such as weighted pseudo almost automorphic functions \cite{jq}, single-measure pseudo almost automorphic functions \cite{dd1}, double-measure pseudo almost periodic functions \cite{dd2,dd3}, Stepanov pseudo almost automorphic functions \cite{dia}, Stepanov weighted pseudo almost automorphic functions \cite{xia}, Stepanov measure pseudo almost automorphic functions \cite{uk} and Weyl pseudo almost automorphic functions \cite{abb}. It is particularly noteworthy that among these pseudo almost automorphic concepts, the double-measure pseudo almost automorphy represents the most comprehensive framework proposed to date. However, there currently exist no published articles addressing Weyl weighted pseudo almost automorphic functions, Weyl measure pseudo almost automorphic functions, and the corresponding Weyl pseudo almost automorphic solutions of differential equations.

Motivated by these observations, this work primarily aims to put forward a concept of Weyl double-measure pseudo almost automorphic functions and systematically investigate their fundamental properties. Subsequently, as an application of our theoretical framework, we will examine the existence and global exponential stability of Weyl double-measure pseudo almost automorphic solutions for a class of semilinear abstract differential equations. This investigation bridges current theoretical gaps while providing new analytical tools for studying Weyl measure pseudo almost automorphic dynamics of differential equations.

The organization of this paper is as follows: Section 2 recalls essential concepts, introduces necessary notations, and presents the definition and fundamental properties of Weyl almost automorphic functions. We define the working spaces required for subsequent analysis and provide a rigorous proof of their completeness. Section 3 proposes a concept of Weyl double-measure pseudo almost automorphic functions and systematically investigates their core characteristics. Section 4 establishes the existence and uniqueness of Weyl pseudo almost automorphic solutions and Weyl double-measure pseudo almost automorphic solutions for a class of semilinear abstract differential equations through fixed-point theorem applications. Section 5 conducts an analysis of the global exponential stability of these solutions. Section 6 provides an example to validate the theoretical framework developed in this study.

\section{Preliminaries and some properties of Weyl almost automorphic functions}
\setcounter{equation}{0}
\indent

Let $(\mathbb{X},\|\cdot\|)$ and $(\mathbb{Y},\|\cdot\|_\mathbb{Y})$ be Banach spaces.
 For exponents $1\le p <\infty$, we write $\mathcal{L}_{loc}^{p}(\mathbb{R},\mathbb{X})$ for the collection  of all mappings $f:\mathbb{R} \rightarrow\mathbb{X}$ whose restrictions to any compact interval are $p$-integrable.  The symbol $\mathcal{L}^\infty(\mathbb{R},\mathbb{X})$ comprises all measurable functions  $g:\mathbb{R} \rightarrow\mathbb{X}$ satisfying $\|g\|_\infty:=\text{ess} \sup_{t\in \mathbb{R}}\|g(t)\|$,  endowed with the essential supremum norm $\|\cdot\|_\infty$.

\begin{definition}\label{bo1} \cite{a4} A continuous mapping $\varphi: \mathbb{R}\to\mathbb{X}$ is termed Bohr almost periodic if it satisfies the following uniform approximation criterion: for every $\epsilon>0$, there corresponds a positive number $\ell_\epsilon$ such that every real interval of length $\ell_\epsilon$ contains at least one $\tau=\tau_\epsilon$ (called an $\epsilon$-translation number) fulfilling
\begin{align*}
\sup_{t\in\mathbb{R}} \|\varphi(t+\tau) - \varphi(t)\| < \epsilon.
\end{align*}
The space constituted by all such Bohr almost periodic functions will be symbolized as $AP(\mathbb{R},\mathbb{X})$.
\end{definition}

\begin{definition}\label{bo2}\cite{ddd1} A continuous mapping $\varphi:\mathbb{R}\to\mathbb{X}$ is declared Bochner almost automorphic if it satisfies the double limit criterion: given any real sequence $\{\gamma'_n\}\subset \mathbb{R}$, there exists a refined subsequence $\{\gamma_n\}\subset\{\gamma'_n\}$ such that the two-stage limit process
\begin{align*}
\lim_{n\to\infty} \varphi(t+\gamma_n) := \widetilde{\varphi}(t) \quad \text{(pointwise convergence)}
\end{align*}
establishes a well-defined function $\widetilde{\varphi}:\mathbb{R}\to\mathbb{X}$, and furthermore satisfies the reciprocal convergence
\begin{align*}
\lim_{n\to\infty} \widetilde{\varphi}(t-\gamma_n) = \varphi(t) \quad \text{(pointwise restoration)}
\end{align*}
for all $t\in\mathbb{R}$. The collection of all such mappings forms the Bochner almost automorphic function space, denoted by $AA(\mathbb{R},\mathbb{X})$.
\end{definition}

\begin{definition}\cite{bb} A continuous mapping $\varphi:\mathbb{R}\times\mathbb{R}\to\mathbb{Y}$ is defined as bi-almost automorphic if it satisfies the following dual convergence mechanism: for any real sequence $\{\gamma'_n\}\subset\mathbb{R}$, one can extract a refined subsequence $\{\gamma_n\}\subset\{\gamma'_n\}$ such that the coupled limits
\begin{align*}
\lim_{n\to\infty} \varphi(t+\gamma_n, s+\gamma_n) := \widetilde{\varphi}(t,s) \quad \text{(simultaneous convergence)}
\end{align*}
determine a well-defined function $\widetilde{\varphi}:\mathbb{R}^2\to\mathbb{Y}$ for each pair $(t,s)\in\mathbb{R}^2$, and reciprocally satisfy the restoration condition
\begin{align*}
\lim_{n\to\infty} \widetilde{\varphi}(t-\gamma_n, s-\gamma_n) = \varphi(t,s) \quad \text{(symmetric restoration)}
\end{align*}
for every pair $(t,s)\in\mathbb{R}^2$.
\end{definition}

For $u\in \mathcal{L}_{loc}^{p}(\mathbb{R},\mathbb{X})$ and $h\in \mathbb{R}^+$, the Stepanov norm of $u$ is defined as follows:
\begin{align*}
\|u\|_{S_h^p}=\sup\limits_{t\in\mathbb{R}}\bigg(\frac{1}{h}\int_{t}^{t+h}\|u(s)\|^p\mathrm{d}s\bigg)^{\frac{1}{p}}.
\end{align*}
From the definition $\|\cdot\|_{S_h^p}$, it seems that for different $h\in \mathbb{R}^+$, we have different Stepanov norm. However, for all $h\in \mathbb{R}$, all the Stepanov norms are equivalent. Hence, for convenience, we have the following definition:
\begin{definition}\label{def24}\cite{zs1} A function $u\in\mathcal{L}^p_{loc}(\mathbb{R},\mathbb{X})$ is characterized as Stepanov almost automorphic if it satisfies that for given any real sequence $\{\gamma'_n\}\subset\mathbb{R}$, one can extract a subsequence $\{\gamma_n\}\subset\{\gamma'_n\}$ and identify a function $\widetilde{u}\in\mathcal{L}^p_{loc}(\mathbb{R},\mathbb{X})$ such that
\begin{align*}
\lim_{n\to\infty} \left(\int_t^{t+1} \|u(s+\gamma_n) - \widetilde{u}(s)\|^p ds\right)^{1/p} = 0
\end{align*}
holds for each $t\in\mathbb{R}$, and reciprocally satisfies
\begin{align*}
\lim_{n\to\infty} \left(\int_t^{t+1} \|\widetilde{u}(s-\gamma_n) - u(s)\|^p ds\right)^{1/p} = 0
\end{align*}
for every $t\in\mathbb{R}$. The  space constituted by all Stepanov-type almost automorphic functions is symbolized by $S^p_{AA}(\mathbb{R},\mathbb{X})$.
\end{definition}

 For $u\in \mathcal{L}_{loc}^{p}(\mathbb{R},\mathbb{X})$, the Weyl  seminorm of $u$ is defined as follows:
\begin{align*}
\|u\|_{W^p}=\lim\limits_{h\rightarrow\infty}\|u\|_{S_h^p}=\lim\limits_{h\rightarrow\infty}\sup\limits_{\theta\in\mathbb{R}}\bigg(\frac{1}{h}\int_{\theta}^{\theta+h}\|u(s)\|^p\mathrm{d}s\bigg)^{\frac{1}{p}}.
\end{align*}

\begin{definition}\label{def11}
 A function $u\in \mathcal{L}_{loc}^p(\mathbb{R},\mathbb{X})$ is called $W^p$-bounded if  $||u||_{W^p}<\infty.$
We denote by  $W^p_{B}(\mathbb{R},\mathbb{X})$ the family of all $W^p$-bounded functions.
\end{definition}

From Definition \ref{def11}, one can easily show the following two lemmas.

\begin{lemma}\label{lem21}
If $u\in \mathcal{L}_{loc}^{p}(\mathbb{R},\mathbb{X})$, then for every $\sigma\in \mathbb{R}$, it holds that
\begin{align*}
\|u(\cdot)\|_{W^p}=\|u(\cdot+\sigma)\|_{W^p}.
\end{align*}
\end{lemma}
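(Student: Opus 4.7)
The plan is to reduce the statement to translation invariance of the Stepanov-type quantity at each fixed window length $h>0$, and then pass to the limit $h\to\infty$. The key observation is that for the translated function $v(s):=u(s+\sigma)$, the integral $\int_\theta^{\theta+h}\|v(s)\|^p\,ds$ can be rewritten as an integral of $\|u\|^p$ over a shifted window via a linear change of variables, and the supremum over the base point $\theta$ then absorbs that shift.

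First I would fix $h>0$ and apply the substitution $r=s+\sigma$ in the inner integral to obtain
\[
\frac{1}{h}\int_\theta^{\theta+h}\|u(s+\sigma)\|^p\,\mathrm{d}s \;=\; \frac{1}{h}\int_{\theta+\sigma}^{\theta+\sigma+h}\|u(r)\|^p\,\mathrm{d}r.
\]
Since the map $\theta\mapsto \theta+\sigma$ is a bijection of $\mathbb{R}$ onto itself, taking $\sup_{\theta\in\mathbb{R}}$ on both sides gives
\[
\sup_{\theta\in\mathbb{R}}\frac{1}{h}\int_\theta^{\theta+h}\|u(s+\sigma)\|^p\,\mathrm{d}s \;=\; \sup_{\theta'\in\mathbb{R}}\frac{1}{h}\int_{\theta'}^{\theta'+h}\|u(r)\|^p\,\mathrm{d}r,
\]
which upon taking $p$-th roots yields $\|u(\cdot+\sigma)\|_{S_h^p}=\|u(\cdot)\|_{S_h^p}$ for every $h>0$.

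Passing to the limit $h\to\infty$ on both sides, which is exactly the definition of $\|\cdot\|_{W^p}$, produces the desired identity $\|u(\cdot+\sigma)\|_{W^p}=\|u(\cdot)\|_{W^p}$. I do not anticipate any genuine obstacle: the entire argument rests on translation invariance of Lebesgue measure, and the only point meriting a brief comment is the interchange between the shift of the integration interval and the shift of the supremum variable, which is immediate because $\sigma$ is a fixed real number.
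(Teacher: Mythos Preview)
Your argument is correct and is exactly the natural one: the paper itself does not spell out a proof but simply remarks that the lemma follows easily from the definition, and what you have written is precisely that routine verification via the change of variables $r=s+\sigma$ and translation invariance of the supremum. There is nothing to add or compare.
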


\begin{lemma}\label{lem11}
If  $u,v\in W^p_{B}(\mathbb{R},\mathbb{X})$ and $\alpha$ is a scalar,  then  $u+v, \alpha u\in W^p_{B}(\mathbb{R},\mathbb{X})$.
\end{lemma}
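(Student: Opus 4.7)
The plan is to verify both closure properties by tracing the inequalities through the chain $\|\cdot\| \to \|\cdot\|_{S_h^p} \to \|\cdot\|_{W^p}$, relying on two classical facts: Minkowski's integral inequality and the scalar homogeneity of the $L^p$-integral. Since $\mathcal{L}^p_{loc}(\mathbb{R},\mathbb{X})$ is itself a vector space, the measurability and local $p$-integrability of $u+v$ and $\alpha u$ are automatic, so the whole task reduces to showing $\|u+v\|_{W^p}$ and $\|\alpha u\|_{W^p}$ are finite.

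First I would handle the scalar case. For every fixed $h>0$ and $\theta\in\mathbb{R}$, pulling the constant $|\alpha|^p$ out of the integral gives
\begin{align*}
\left(\frac{1}{h}\int_\theta^{\theta+h}\|\alpha u(s)\|^p\,\mathrm{d}s\right)^{1/p}=|\alpha|\left(\frac{1}{h}\int_\theta^{\theta+h}\|u(s)\|^p\,\mathrm{d}s\right)^{1/p},
\end{align*}
so taking the supremum over $\theta$ and then the limit $h\to\infty$ yields $\|\alpha u\|_{W^p}=|\alpha|\,\|u\|_{W^p}<\infty$, whence $\alpha u\in W^p_B(\mathbb{R},\mathbb{X})$.

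For the sum, I would apply Minkowski's inequality pointwise in $\theta$ to obtain
\begin{align*}
\left(\frac{1}{h}\int_\theta^{\theta+h}\|u(s)+v(s)\|^p\,\mathrm{d}s\right)^{1/p}\le \|u\|_{S_h^p}+\|v\|_{S_h^p},
\end{align*}
and then take the supremum over $\theta$ on the left to conclude $\|u+v\|_{S_h^p}\le \|u\|_{S_h^p}+\|v\|_{S_h^p}$. Letting $h\to\infty$ and invoking the definition of $\|\cdot\|_{W^p}$ together with the fact that the limit of a sum is the sum of limits gives $\|u+v\|_{W^p}\le \|u\|_{W^p}+\|v\|_{W^p}<\infty$, so $u+v\in W^p_B(\mathbb{R},\mathbb{X})$.

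There is no real obstacle here; the only subtle point to check is that both $\|u\|_{S_h^p}$ and $\|v\|_{S_h^p}$ actually possess limits (finite or not) as $h\to\infty$, which is already guaranteed by the hypothesis $u,v\in W^p_B(\mathbb{R},\mathbb{X})$ encoded in Definition \ref{def11}. Hence the additivity on the level of $S_h^p$-norms transfers cleanly to the $W^p$-seminorm.
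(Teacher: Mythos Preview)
Your proof is correct and is precisely the direct verification the paper has in mind when it asserts the lemma follows ``easily'' from Definition~\ref{def11}; the paper gives no written argument, and your use of Minkowski's inequality at the $S_h^p$-level followed by passage to the limit is the natural route. One small remark: in your closing paragraph you reassure yourself that $\|u\|_{S_h^p}$ and $\|v\|_{S_h^p}$ have limits, but the point that actually needs comment is the existence of $\lim_{h\to\infty}\|u+v\|_{S_h^p}$---this is guaranteed by the standard subadditivity/Fekete argument underlying the Weyl seminorm (implicit in the paper's definition), or alternatively your inequality already bounds the $\limsup$, which suffices for $W^p$-boundedness.
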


\begin{theorem}\label{lem12}
The space $W^p_{B}(\mathbb{R},\mathbb{X})$ is a complete space regarding the seminorm $\|\cdot\|_{W^p}$.
\end{theorem}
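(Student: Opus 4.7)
Plan: Given a Cauchy sequence $\{u_n\}\subset W^p_B$ with respect to $\|\cdot\|_{W^p}$, the plan is to construct a limit $u\in W^p_B$ by a subsequence-and-Fatou argument. First I will extract a subsequence $\{u_{n_k}\}$ satisfying $\|u_{n_{k+1}}-u_{n_k}\|_{W^p}<2^{-k}$. Since $\|v\|_{W^p}=\lim_{h\to\infty}\|v\|_{S_h^p}$, for each $k$ one may choose $h_k\nearrow\infty$ such that $\|u_{n_{k+1}}-u_{n_k}\|_{S_{h_k}^p}<2^{1-k}$, i.e.,
\begin{align*}
\sup_{\theta\in\mathbb{R}}\frac{1}{h_k}\int_\theta^{\theta+h_k}\|u_{n_{k+1}}(s)-u_{n_k}(s)\|^p\,ds<2^{p(1-k)}.
\end{align*}

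Second, on any compact $[a,b]$ with $b-a\le h_k$, the Stepanov bound gives $\|u_{n_{k+1}}-u_{n_k}\|_{L^p([a,b])}\le h_k^{1/p}\cdot 2^{1-k}$; by choosing the subsequence so the increments decay fast enough to dominate the growth of $h_k^{1/p}$ (i.e.\ so that $\sum_k h_k^{1/p}\|u_{n_{k+1}}-u_{n_k}\|_{S_{h_k}^p}$ converges), Minkowski's inequality yields that $\{u_{n_k}\}$ is Cauchy in $L^p([a,b])$ for every compact $[a,b]$. Completeness of $L^p$ together with a diagonal argument then produce $u\in L^p_{loc}(\mathbb{R},\mathbb{X})$ with $u_{n_k}\to u$ in $L^p_{loc}$ and, after extracting once more, pointwise almost everywhere.

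Third, I will verify that $u$ is the Weyl limit. For any fixed $h>0$ and $k$, Fatou's lemma applied to $\|u_{n_k}(s)-u_{n_j}(s)\|^p\to\|u_{n_k}(s)-u(s)\|^p$ a.e., together with the elementary inequality $\sup_\theta\liminf_j F_j(\theta)\le\liminf_j\sup_\theta F_j(\theta)$, yields
\begin{align*}
\|u_{n_k}-u\|_{S_h^p}\le\liminf_{j\to\infty}\|u_{n_k}-u_{n_j}\|_{S_h^p}.
\end{align*}
Letting $h\to\infty$ and using $\|\cdot\|_{S_h^p}\to\|\cdot\|_{W^p}$ gives $\|u_{n_k}-u\|_{W^p}\le\liminf_j\|u_{n_k}-u_{n_j}\|_{W^p}$, which tends to $0$ as $k\to\infty$ by the Cauchy property. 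A triangle-inequality step extends convergence from the subsequence to the full sequence, and the bound $\|u\|_{W^p}\le\|u-u_{n_k}\|_{W^p}+\|u_{n_k}\|_{W^p}<\infty$ places $u$ in $W^p_B$.

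The main obstacle is the coupling between the scales $h_k$ and the decay of $\|u_{n_{k+1}}-u_{n_k}\|_{W^p}$: the minimum $h_k$ needed to bring the Stepanov seminorm within a prescribed tolerance of the Weyl seminorm depends on the individual functions and is not controlled a priori. Consequently the subsequence and the scales $h_k$ must be chosen simultaneously and inductively, making the $W^p$-increments small enough (relative to $h_k^{1/p}$) to preserve summability of the $L^p_{loc}$ estimates. The interchange of $\sup_\theta$ with $\liminf_j$ in the Fatou step is a minor but necessary technical point, justified by the elementary sup/liminf inequality above.
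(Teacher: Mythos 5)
Your construction of the limit breaks down at the second step, and the device you introduce to repair it cannot work. You propose to deduce that the extracted subsequence is Cauchy in $\mathcal{L}^p([a,b],\mathbb{X})$ for every compact interval by arranging that $\sum_k h_k^{1/p}\|u_{n_{k+1}}-u_{n_k}\|_{S^p_{h_k}}$ converges. But
\begin{align*}
h^{1/p}\|v\|_{S_h^p}=\Big(\sup_{\theta\in\mathbb{R}}\int_\theta^{\theta+h}\|v(s)\|^p\,ds\Big)^{1/p}
\end{align*}
is \emph{nondecreasing} in $h$, and for every $h\ge b-a$ it is bounded below by the fixed quantity $\|v\|_{\mathcal{L}^p([a,b])}$; enlarging $h_k$ only worsens your estimate. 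At the same time $\|\cdot\|_{W^p}$ is a seminorm whose kernel contains every compactly supported element of $\mathcal{L}^p_{loc}$, so no smallness of the Weyl increments, however extreme, controls the local $\mathcal{L}^p$ increments. Concretely, for $p=2$ take $u_n(t)=\sin(nt)\,\mathbf{1}_{(0,1)}(t)$. Every difference $u_n-u_m$ is supported in $[0,1]$, hence $\|u_n-u_m\|_{W^2}=0$ and the sequence is $\|\cdot\|_{W^2}$-Cauchy with all increments already equal to zero in the Weyl seminorm; yet no subsequence converges in $\mathcal{L}^2([0,1])$ (the functions tend weakly to $0$ while $\|u_{n}\|_{\mathcal{L}^2(0,1)}^2\to 1/2$), and $h^{1/2}\|u_n-u_m\|_{S^2_h}\ge\|u_n-u_m\|_{\mathcal{L}^2(0,1)}$ stays bounded away from $0$ for large $n\neq m$ no matter how $h_k$ is chosen. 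So the simultaneous inductive choice you describe is impossible, the $\mathcal{L}^p_{loc}$ limit $u$ need not exist along any subsequence, and Steps 2 and 3 collapse.

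The theorem survives only because completeness with respect to a seminorm asks for \emph{some} $u$ with $\|u_n-u\|_{W^p}\to0$, and such a $u$ may differ from any local limit by a seminorm-null function (in the example above $u=0$ works even though $u_n\not\to0$ in $\mathcal{L}^2_{loc}$). A correct argument must exploit this freedom rather than produce $u$ as a pointwise or local $\mathcal{L}^p$ limit --- for instance by assembling a representative from the $u_{n_k}$ on expanding windows whose lengths are tied to the scales at which the Stepanov means approximate the Weyl seminorm. Your Step 3 also contains a secondary unjustified interchange: after Fatou you pass from $\|u_{n_k}-u\|_{S^p_h}\le\liminf_j\|u_{n_k}-u_{n_j}\|_{S^p_h}$ to the corresponding inequality for $\|\cdot\|_{W^p}$ by letting $h\to\infty$, which requires commuting $\lim_{h\to\infty}$ with $\liminf_j$; since the scale at which $\|u_{n_k}-u_{n_j}\|_{S^p_h}$ approaches $\|u_{n_k}-u_{n_j}\|_{W^p}$ depends on $j$, this is exactly the same non-uniformity that defeats Step 2. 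For what it is worth, the paper's own proof follows the same local-$\mathcal{L}^p$ strategy and is terse at precisely these points; your added summability requirement has the merit of making the obstruction explicit, but as written it shows the route cannot be completed rather than completing it.
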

\begin{proof}
Let $\{\varphi_n\}\in W^p_{B}(\mathbb{R},\mathbb{X})$  be a Cauchy sequence of $W^p_{B}(\mathbb{R},\mathbb{X})$. Then for any $\epsilon>0$, there is a positive integer $N(\epsilon)$ such that for any $n,m\geq N$, it holds
\begin{align*}
 \|\varphi_n-\varphi_m\|_{W^p}= \lim\limits_{h\rightarrow\infty}\sup\limits_{\theta\in\mathbb{R}}\bigg(\frac{1}{h}\int_{\theta}^{\theta+h}\|\varphi_n(s)-\varphi_m(s)\|^p\mathrm{d}s\bigg)^{\frac{1}{p}}<\epsilon,
\end{align*}
which implies that there exists a number $h_0(\epsilon)>0$ such that for all $h\geq h_0$,
\begin{align*}
  \sup\limits_{\theta\in\mathbb{R}}\bigg(\frac{1}{h}\int_{\theta}^{\theta+h}\|\varphi_n(s)-\varphi_m(s)\|^p\mathrm{d}s\bigg)^{\frac{1}{p}}<2\epsilon.
\end{align*}
In the above inequality, let $h(\geq h_0)$ be fixed, then  we have, for all $m,n\geq N$,
\begin{align*}
\int_{\theta}^{\theta+h}\|\varphi_n(s)-\varphi_m(s)\|^p\mathrm{d}s<h (2\epsilon)^p,\,\, \forall \, \theta\in \mathbb{R}.
\end{align*}
This inequality indicates that for every $\theta\in \mathbb{R}$,  $\{\varphi_n(t)\}$ is a Cauchy sequence of $\mathcal{L}^p([\theta,\theta+h],\mathbb{X})$ when it is restricted on $[\theta,\theta+h_0]$. Since $\mathcal{L}^p([\theta,\theta+h],\mathbb{X})$ is a Banach space, there is a $\varphi\in \mathcal{L}^p([\theta,\theta+h],\mathbb{X})$ such that $\lim\limits_{n\rightarrow \infty}\varphi_n(t)=\varphi(t)$.
Since the limit is unique (a.e.) in any space $\mathcal{L}^p([\theta,\theta+h],\mathbb{X})$, where $h\geq h_0$ and $\theta\in \mathbb{R}$,  we obtain that there is a  $\varphi\in \mathcal{L}_{loc}^p(\mathbb{R},\mathbb{X})$ satisfying $\lim\limits_{n\rightarrow \infty}\varphi_n(t)=\varphi(t)$ in $\mathcal{L}^p_{loc}(\mathbb{R},\mathbb{X})$.

For the $\varphi$ above, for each $h\geq h_0$ and each $n\geq N$, there holds
\begin{align*}
 \sup\limits_{\theta\in\mathbb{R}} \bigg(\frac{1}{h}\int_{\theta}^{\theta+h}\|\varphi_n(s)-\varphi(s)\|^p\mathrm{d}s\bigg)^{\frac{1}{p}}\leq 2\epsilon.
\end{align*}
Hence, for $n\geq N$,
\begin{align*}
  \lim\limits_{h\rightarrow\infty} \sup\limits_{\theta\in\mathbb{R}}\bigg(\frac{1}{h}\int_{\theta}^{\theta+h}\|\varphi_n(s)-\varphi(s)\|^p\mathrm{d}s\bigg)^{\frac{1}{p}}\leq 2\epsilon.
\end{align*}

Since $\varphi-\varphi_n, \varphi_n$ are $W^p$-bounded and $\varphi=(\varphi-\varphi_n)+\varphi_n$, we conclude that $\varphi$ is $W^p$-bounded.
 The proof is complete.
\end{proof}

Now, we introduce the definition of Weyl almost automorphic functions as follows.
\begin{definition}\label{zs} \cite{lh2}
A function $\varphi\in W^p_B(\mathbb{R},\mathbb{X})$ is said to be $p$-th Weyl almost automorphic, if for every sequence  $\{\gamma'_{n}\}\subset \mathbb{R}$, there exist a subsequence $\{\gamma_{n}\}\subset \{\gamma'_{n}\}$ and a function $\tilde{\varphi}: \mathbb{R}\rightarrow \mathbb{X}$ such that
$$\lim_{n\rightarrow\infty}\|\varphi(\cdot+\gamma_{n})-\tilde{\varphi}(\cdot)\|_{W^{p}}=0\quad and \quad \lim_{n\rightarrow\infty}\|\tilde{\varphi}(\cdot-\gamma_{n})-\varphi(\cdot)\|_{W^{p}}=0.$$
 We denote by $W^p_{AA}(\mathbb{R},\mathbb{X})$ the collection of all such functions.
\end{definition}
From Definitions \ref{bo1}, \ref{bo2}, \ref{def24} and \ref{zs}, it is easy to see that
$$AP(\mathbb{R},\mathbb{X})\subset AA(\mathbb{R},\mathbb{X}) \subset S^p_{AA}(\mathbb{R},\mathbb{X})\subset W^p_{AA}(\mathbb{R},\mathbb{X}).
$$

\begin{example}\label{ex1a}
According to Example 4.4 in \cite{a4}, we see that
\begin{equation*}
 f(t):=\cos \left(\frac{1}{2+\sin \pi t+\sin 3 t}\right) \in A A(\mathbb{R}, \mathbb{R}).
\end{equation*}
Since $\|\alpha\|_{W^p}=0$ and $\|\beta\|_{W^p}=0$, where $\alpha(t)=3e^{-|t|}$ and $\beta(t)=\frac{2}{1+t^2}$, we have
\begin{equation*}
 f(t)+\alpha(t)+\beta(t) \in W_{AA}^p(\mathbb{R}, \mathbb{R}).
\end{equation*}
\end{example}

\begin{example}\label{ex1b}
It is easy to check that function
\begin{align*}
  f(t)=\left\{\begin{array}{lll} 1, & 0<t<\frac{1}{2},\\
  0, & \mathrm{elsewhere}\end{array}\right.
\end{align*}
belongs to $W^p_{AA}(\mathbb{R},\mathbb{R})$ with $\tilde{f}=0$.
\end{example}

\begin{proposition}\label{lem23}
If $f\in W^{p}_{AA}(\mathbb{R},\mathbb{X})$, then $\tilde{f}\in W^p_B$ and $\|\tilde{f}\|_{W^p}=\|f\|_{W^p}$, where $\tilde{f}$ is mentioned in Definition \ref{zs}.
\end{proposition}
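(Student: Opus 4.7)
The plan is to bracket $\|\tilde f\|_{W^p}$ by $\|f\|_{W^p}$ from both sides, exploiting the two $W^p$-limits supplied by Definition~\ref{zs} together with the translation invariance established in Lemma~\ref{lem21}. The two tools I need are: (i) the triangle inequality for the Weyl seminorm, which is immediate from Minkowski's inequality applied inside $\bigl(\tfrac{1}{h}\int_{\theta}^{\theta+h}\|u(s)+v(s)\|^{p}\,\ud s\bigr)^{1/p}$ before taking $\sup_\theta$ and then the limit $h\to\infty$; and (ii) Lemma~\ref{lem21}, which yields $\|f(\cdot+\g_n)\|_{W^p}=\|f\|_{W^p}$ and $\|\tilde f(\cdot-\g_n)\|_{W^p}=\|\tilde f\|_{W^p}$ for every $n$.

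First I would fix a subsequence $\{\g_n\}$ and the limit function $\tilde f$ furnished by Definition~\ref{zs}. Writing $\tilde f(\cdot)=\bigl[\tilde f(\cdot)-f(\cdot+\g_n)\bigr]+f(\cdot+\g_n)$ and applying the triangle inequality for $\|\cdot\|_{W^p}$ followed by Lemma~\ref{lem21} yields
$\|\tilde f\|_{W^p}\le\|\tilde f(\cdot)-f(\cdot+\g_n)\|_{W^p}+\|f\|_{W^p}.$
Letting $n\to\infty$ and invoking the first convergence in Definition~\ref{zs} gives $\|\tilde f\|_{W^p}\le\|f\|_{W^p}<\infty$, which in particular confirms $\tilde f\in W^p_B$; alternatively, the same conclusion follows from Lemma~\ref{lem11} once $n$ is chosen so that $\|\tilde f-f(\cdot+\g_n)\|_{W^p}$ is finite, since then $\tilde f$ is the sum of two $W^p$-bounded functions.

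For the reverse inequality I would use the second convergence in Definition~\ref{zs}: decomposing $f(\cdot)=\bigl[f(\cdot)-\tilde f(\cdot-\g_n)\bigr]+\tilde f(\cdot-\g_n)$ and applying the triangle inequality together with Lemma~\ref{lem21} gives
$\|f\|_{W^p}\le\|f(\cdot)-\tilde f(\cdot-\g_n)\|_{W^p}+\|\tilde f\|_{W^p},$
so sending $n\to\infty$ produces $\|f\|_{W^p}\le\|\tilde f\|_{W^p}$. Combining the two estimates delivers the desired equality $\|\tilde f\|_{W^p}=\|f\|_{W^p}$. I do not anticipate any substantive obstacle: the argument is essentially a two-sided squeeze, and the only ingredient not explicitly stated in the excerpt is the triangle inequality for $\|\cdot\|_{W^p}$, which as noted above follows from Minkowski's inequality by a routine $\sup$-then-limit passage.
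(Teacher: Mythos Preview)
Your proposal is correct and follows essentially the same route as the paper: both arguments use the triangle inequality for $\|\cdot\|_{W^p}$ together with the translation invariance of Lemma~\ref{lem21} to sandwich $\|\tilde f\|_{W^p}$ between $\|f\|_{W^p}\pm\epsilon$ via the two limits in Definition~\ref{zs}. The only cosmetic difference is that the paper phrases the passage to the limit as ``for any $\epsilon>0$ choose $n>N$'' rather than ``let $n\to\infty$'', but the logic is identical.
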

\begin{proof}
For any sequence of  $\{\gamma'_{n}\}\subset \mathbb{R}$, there exists a subsequence $\{\gamma_{n}\}$ such that
$$\lim_{n\rightarrow\infty}\|f(\cdot+\gamma_{n})-\tilde{f}(\cdot)\|_{W^{p}}=0\quad \mathrm{and} \quad \lim_{n\rightarrow\infty}\|\tilde{f}(\cdot-\gamma_{n})-f(\cdot)\|_{W^{p}}=0.$$
Hence, for any $\epsilon>0$, there exists an $N>0$ such that for $n>N$,
$$\|f(\cdot+\gamma_{n})-\tilde{f}(\cdot)\|_{W^{p}}<\epsilon\quad \mathrm{and} \quad \|\tilde{f}(\cdot-\gamma_{n})-f(\cdot)\|_{W^{p}}<\epsilon.$$
Therefore, for $n>N$, we have
\begin{align*}
  \|\tilde{f}(\cdot)\|_{W^p}\leq \|f(\cdot+\gamma_{n})-\tilde{f}(\cdot)\|_{W^{p}}+\|f(\cdot+\gamma_{n})\|_{W^{p}}<\epsilon+\|f(\cdot)\|_{W^{p}}
\end{align*}
and
\begin{align*}
 \|f(\cdot)\|_{W^p}\leq  \|\tilde{f}(\cdot-\gamma_{n})-f(\cdot)\|_{W^{p}}+\|\tilde{f}(\cdot-\gamma_{n})\|_{W^{p}}<\epsilon+\|\tilde{f}(\cdot)\|_{W^{p}}.
\end{align*}
From the arbitrariness of $\epsilon$, the conclusion of the proposition follows immediately. This ends the proof.
\end{proof}

\begin{lemma}\label{lem1}
If $u,v\in W^p_{AA}(\mathbb{R},\mathbb{X})$ and  $\alpha\in \mathbb{R},$ then we have $u+v,\alpha u\in W^p_{AA}(\mathbb{R},\mathbb{X}).$
\begin{proof}
Since $u,v\in W^p_{AA}(\mathbb{R},\mathbb{X}),$  for any sequence  $\{\gamma''_{n}\}\subset \mathbb{R},$ one can select a subsequence $\{\gamma'_{n}\}$ such that
for  any $\epsilon>0,$ there exists
 $N_{1}(\epsilon)\in \mathbb{N},$ when $n>N_{1},$ it holds
 $$\| u(\cdot+\gamma'_{n})-\tilde{u}(\cdot)\|_{W^p}<\frac{\epsilon}{2}\quad \mathrm{and} \quad \|\tilde{u}(\cdot-\gamma'_{n})-u(\cdot)\|_{W^p}<\frac{\epsilon}{2}.$$
Meanwhile,   there is a subsequence $\{\gamma_{n}\}$ of $\{\gamma'_{n}\}$ and a number
$N_{2}(\epsilon)\in \mathbb{N}$ such that, when $n>N_{2},$
$$
\|v(\cdot+\gamma_{n})-\tilde{v}(\cdot)\|_{W^p}<\frac{\epsilon}{2} \quad \mathrm{and} \quad\|\tilde{v}(\cdot-\gamma_{n})-v(\cdot)\|_{W^p}<\frac{\epsilon}{2}.$$
Take $N_{0}=\max(N_{1},N_{2}),$ for $n>N_{0},$ we have
$$\| u(\cdot+\gamma_{n})-\tilde{u}(\cdot)\|_{W^p}<\frac{\epsilon}{2} \quad \mathrm{and} \quad\| v(\cdot+\gamma_{n})-\tilde{v}(\cdot)\|_{W^p}<\frac{\epsilon}{2}.$$
Consequently, we arrive at, for $n>N_0$,
\begin{align*}
   &\| (u(\cdot+\gamma_{n})+v(\cdot+\gamma_{n}))-\tilde{u}(\cdot)-\tilde{v}(\cdot)\|_{W^p}\\
   \leq&\| u(\cdot+\gamma_{n})-\tilde{u}(\cdot)\|_{W^p}+\| v(\cdot+\gamma_{n})-\tilde{v}(\cdot)\|_{W^p}<\epsilon.
\end{align*}

Similarly,  for $n>N_0$, one can gain
$$\|\tilde{u}(\cdot-\gamma_{n})+\tilde{v}(\cdot-\gamma_{n})-u(\cdot)-v(\cdot)\|_{W^p}<\epsilon.$$
So $u+v\in W^p_{AA}(\mathbb{R},\mathbb{X})$ is proved.  The proof of $\alpha u\in W^p_{AA}(\mathbb{R},\mathbb{X})$ is trivial and we will omit it here. The proof of Lemma \ref{lem1} is complete.
\end{proof}
\end{lemma}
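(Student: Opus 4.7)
The plan is to show that $W^p_{AA}(\mathbb{R},\mathbb{X})$ is closed under finite linear combinations by combining the definitional extraction of subsequences with the triangle inequality for $\|\cdot\|_{W^p}$. Since Definition \ref{zs} demands a single subsequence $\{\gamma_n\}$ producing both the forward convergence $u(\cdot+\gamma_n)\to\tilde{u}$ and the reverse convergence $\tilde{u}(\cdot-\gamma_n)\to u$ in Weyl seminorm, the natural first move is to construct one sequence that works simultaneously for $u$ and $v$.

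Given an arbitrary sequence $\{\gamma_n''\}\subset\mathbb{R}$, I would first apply Definition \ref{zs} to $u$ to extract a subsequence $\{\gamma_n'\}\subset\{\gamma_n''\}$ together with a limit function $\tilde{u}$ satisfying $\|u(\cdot+\gamma_n')-\tilde{u}\|_{W^p}\to 0$ and $\|\tilde{u}(\cdot-\gamma_n')-u\|_{W^p}\to 0$. Then, applying the same definition to $v$ along $\{\gamma_n'\}$ instead of an arbitrary sequence, I would extract a further subsequence $\{\gamma_n\}\subset\{\gamma_n'\}$ with a limit $\tilde{v}$ giving the analogous two convergences. Since $\{\gamma_n\}$ is itself a subsequence of $\{\gamma_n'\}$, the Weyl-seminorm convergences already obtained for $u$ remain valid along $\{\gamma_n\}$, so all four convergences hold for the single sequence $\{\gamma_n\}$.

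With a common sequence in hand, the conclusion for $u+v$ follows from subadditivity of $\|\cdot\|_{W^p}$ (inherited from the Stepanov norms by passing to the limit $h\to\infty$, and reflected in Lemma \ref{lem11}):
\begin{align*}
\|(u+v)(\cdot+\gamma_n)-(\tilde{u}+\tilde{v})(\cdot)\|_{W^p}\leq \|u(\cdot+\gamma_n)-\tilde{u}(\cdot)\|_{W^p}+\|v(\cdot+\gamma_n)-\tilde{v}(\cdot)\|_{W^p},
\end{align*}
so choosing $n$ large enough that each term on the right is less than $\epsilon/2$ yields the forward Weyl convergence with candidate limit $\widetilde{u+v}=\tilde{u}+\tilde{v}$; the reverse convergence is established by the same estimate with the roles swapped. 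For $\alpha u$, absolute homogeneity $\|\alpha f\|_{W^p}=|\alpha|\,\|f\|_{W^p}$ reduces the two conditions to multiplying the bounds obtained for $u$ by $|\alpha|$, so $\widetilde{\alpha u}=\alpha\tilde{u}$ does the job.

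I do not expect any genuine obstacle: the whole argument is structural, relying only on the seminorm axioms for $\|\cdot\|_{W^p}$ and on the fact that nested subsequence extractions preserve any convergence already obtained. The one bookkeeping point worth emphasizing is that the second extraction must be performed on the already-thinned sequence $\{\gamma_n'\}$ rather than on the original $\{\gamma_n''\}$; this is precisely what makes the triangle inequality applicable at a common index $n$.
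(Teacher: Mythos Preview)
Your proposal is correct and essentially identical to the paper's own proof: both perform the two-stage subsequence extraction (first for $u$, then for $v$ along the already-thinned sequence), then invoke the triangle inequality for $\|\cdot\|_{W^p}$ with $\epsilon/2$ on each summand, and dismiss the $\alpha u$ case as trivial via homogeneity. Your write-up is in fact slightly more explicit than the paper's about why the convergences for $u$ persist along the further subsequence $\{\gamma_n\}$.
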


The following proposition shows that the Weyl automorphic function is translation invariant.
\begin{proposition}
If $u\in W^p_{AA}(\mathbb{R},\mathbb{X})$,  then $u(\cdot+\alpha)\in W^p_{AA}(\mathbb{R},\mathbb{X})$ for each $\alpha\in \mathbb{R}$.
\end{proposition}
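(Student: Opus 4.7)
The plan is straightforward and relies essentially on Lemma \ref{lem21} (translation invariance of the Weyl seminorm). Define $v(t) := u(t+\alpha)$. I first note that $v \in W^p_B(\mathbb{R},\mathbb{X})$ because Lemma \ref{lem21} gives $\|v\|_{W^p} = \|u(\cdot+\alpha)\|_{W^p} = \|u\|_{W^p} < \infty$, and clearly $v \in \mathcal{L}^p_{loc}(\mathbb{R},\mathbb{X})$ since $u$ is.

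Given an arbitrary sequence $\{\gamma'_n\}\subset\mathbb{R}$, I would apply the Weyl almost automorphy of $u$ to extract a subsequence $\{\gamma_n\}\subset\{\gamma'_n\}$ and a limit function $\tilde{u}$ such that
\begin{align*}
\lim_{n\to\infty}\|u(\cdot+\gamma_n)-\tilde{u}(\cdot)\|_{W^p}=0 \quad\text{and}\quad \lim_{n\to\infty}\|\tilde{u}(\cdot-\gamma_n)-u(\cdot)\|_{W^p}=0.
\end{align*}
The natural candidate for the limit function of the shifted sequence is $\tilde{v}(t) := \tilde{u}(t+\alpha)$.

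Then the key computation is: since $v(t+\gamma_n)-\tilde{v}(t) = u(t+\alpha+\gamma_n)-\tilde{u}(t+\alpha)$, applying Lemma \ref{lem21} with shift $\sigma=\alpha$ yields
\begin{align*}
\|v(\cdot+\gamma_n)-\tilde{v}(\cdot)\|_{W^p} = \|u(\cdot+\gamma_n)-\tilde{u}(\cdot)\|_{W^p} \longrightarrow 0.
\end{align*}
The reciprocal identity $\|\tilde{v}(\cdot-\gamma_n)-v(\cdot)\|_{W^p} = \|\tilde{u}(\cdot-\gamma_n)-u(\cdot)\|_{W^p}\to 0$ follows by the same translation trick. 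Both conditions of Definition \ref{zs} are therefore satisfied by $v$ with limit $\tilde{v}$, which proves $v\in W^p_{AA}(\mathbb{R},\mathbb{X})$.

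There is no real obstacle here; the entire proof reduces to a single invocation of Lemma \ref{lem21}. The only thing that might warrant a short sentence of justification is confirming that the translation invariance of the seminorm passes through the difference of two functions (which is automatic, since one applies the lemma to the function $w(t) := u(t+\gamma_n)-\tilde{u}(t)$ and shifts by $\alpha$).
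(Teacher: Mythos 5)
Your proof is correct and follows essentially the same route as the paper's: both take the same subsequence and limit function $\tilde u$ from the almost automorphy of $u$, set the shifted limit $\tilde u(\cdot+\alpha)$ as the candidate, and reduce both convergence conditions to the originals via the translation invariance of $\|\cdot\|_{W^p}$ from Lemma \ref{lem21}. No gaps; your closing remark about applying the lemma to the difference function is exactly the (implicit) step in the paper's argument.
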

\begin{proof}
Since $u\in W^p_{AA}(\mathbb{R},\mathbb{X}),$ for any sequence $\{\gamma'_{n}\}\subset \mathbb{R},$ one can choose a subsequence $\{\gamma_{n}\}$ such that
$$\lim\limits_{n\rightarrow\infty}\|u(\cdot+\gamma_{n})-\tilde u(\cdot)\|^p_{W^p}=0\quad
\mathrm{and}
\quad \lim\limits_{n\rightarrow\infty}\|\tilde u(\cdot-\gamma_{n})-u(\cdot)\|^p_{W^p}=0.$$

Thereupon, by Lemma \ref{lem21}, we have
$$\lim\limits_{n\rightarrow\infty}\|u(\cdot+\alpha+\gamma_{n})-\tilde u(\cdot+\alpha)\|^p_{W^p}=0\quad
\mathrm{and}
\quad \lim\limits_{n\rightarrow\infty}\|\tilde u(\cdot+\alpha-\gamma_{n})-u(\cdot+\alpha)\|^p_{W^p}=0.$$

Consequently, $u(\cdot+\alpha)\in W^p_{AA}(\mathbb{R},\mathbb{X}).$
The proof is completed.
\end{proof}

We state and prove the composition theorem of Weyl almost automorphic functions as follows.

 \begin{theorem}\label{aaa1}
If $f:\mathbb{X}\rightarrow \mathbb{X}$ satisfies the Lipschitz condition and $x\in W^p_{AA}(\mathbb{R},\mathbb{X}),$ then $f(x(\cdot))$ belongs to $W^p_{AA}(\mathbb{R},\mathbb{X}).$
\end{theorem}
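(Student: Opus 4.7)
The plan is to take the candidate limit $\widetilde{f\circ x}(t):=f(\tilde{x}(t))$, where $\tilde{x}$ is the function produced from $x$ via Definition \ref{zs}, and to transport both defining Weyl limits for $x$ directly to $f\circ x$ using the Lipschitz inequality pointwise inside the integrals that build the Weyl seminorm. Denote by $L>0$ a Lipschitz constant of $f$, so that $\|f(u)-f(v)\|^p\le L^p\|u-v\|^p$ for all $u,v\in\mathbb{X}$. This pointwise bound is the only analytic input I will need; it passes through each of the monotone operations $\tfrac{1}{h}\int$, $\sup_{\theta}$ and $\lim_{h\to\infty}$ that define $\|\cdot\|_{W^p}$.

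The main mechanics would run as follows. Given a sequence $\{\gamma'_n\}\subset\mathbb{R}$, extract a subsequence $\{\gamma_n\}$ and a function $\tilde{x}$ via the hypothesis $x\in W^p_{AA}(\mathbb{R},\mathbb{X})$. For every $\theta\in\mathbb{R}$ and every $h>0$, the pointwise Lipschitz bound gives
\begin{align*}
\frac{1}{h}\int_\theta^{\theta+h}\|f(x(s+\gamma_n))-f(\tilde{x}(s))\|^p\,\mathrm{d}s\le L^p\,\frac{1}{h}\int_\theta^{\theta+h}\|x(s+\gamma_n)-\tilde{x}(s)\|^p\,\mathrm{d}s.
\end{align*}
Taking the $1/p$-th power, the supremum over $\theta$, and then the limit as $h\to\infty$ produces $\|f(x(\cdot+\gamma_n))-f(\tilde{x}(\cdot))\|_{W^p}\le L\,\|x(\cdot+\gamma_n)-\tilde{x}(\cdot)\|_{W^p}\to 0$. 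An identical computation with $\cdot-\gamma_n$ in place of $\cdot+\gamma_n$ (and with the roles of $x$ and $\tilde{x}$ swapped) gives the reciprocal convergence $\|f(\tilde{x}(\cdot-\gamma_n))-f(x(\cdot))\|_{W^p}\to 0$. These are precisely the two limits required by Definition \ref{zs}.

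Before invoking that definition I would check the ambient membership $f\circ x\in W^p_B(\mathbb{R},\mathbb{X})$. From $\|f(x(t))\|\le L\|x(t)\|+\|f(0)\|$, the triangle inequality inside $\bigl(\tfrac{1}{h}\int_\theta^{\theta+h}(\cdot)^p\,\mathrm{d}s\bigr)^{1/p}$ together with $\sup_\theta$ and $\lim_{h\to\infty}$ yields $\|f\circ x\|_{W^p}\le L\|x\|_{W^p}+\|f(0)\|<\infty$, using that $x\in W^p_{AA}\subset W^p_B$ and that the constant $\|f(0)\|$ has finite Weyl seminorm. Measurability of $f\circ x$ follows from continuity of $f$ and from $x\in\mathcal{L}^p_{loc}(\mathbb{R},\mathbb{X})$. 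I do not anticipate a genuine obstacle: the Lipschitz assumption makes Weyl seminorm estimates behave monotonically under composition, and the only mildly delicate point is that $f\circ\tilde{x}$ itself lies in $W^p_B$, which follows from Proposition \ref{lem23} (giving $\tilde{x}\in W^p_B$) together with the same Lipschitz bound applied to $\tilde{x}$ in place of $x$.
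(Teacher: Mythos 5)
Your proposal is correct and follows essentially the same route as the paper: both take $f(\tilde{x}(\cdot))$ as the limit function and push the pointwise Lipschitz bound through the monotone operations defining $\|\cdot\|_{W^p}$ to obtain $\|f(x(\cdot+\gamma_n))-f(\tilde{x}(\cdot))\|_{W^p}\le L\|x(\cdot+\gamma_n)-\tilde{x}(\cdot)\|_{W^p}$ and its reciprocal counterpart. Your explicit verification of $f\circ x\in W^p_B(\mathbb{R},\mathbb{X})$ via $\|f(x(t))\|\le L\|x(t)\|+\|f(0)\|$ is a small addition the paper dismisses as ``easy to see,'' but it is the same argument in substance.
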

\begin{proof}
It is easy to see that $f(x(\cdot))\in W^p_B(\mathbb{R},\mathbb{X})$. For any sequence  $(\gamma_{n})_{n\in \mathbb{N}}\subset \mathbb{R}$, note the fact that
\begin{align*}
\|f(x(t+\gamma_{n}))-f(\tilde{x}(t))\|_{W^p}=&\bigg(\lim\limits_{h\rightarrow\infty} \sup\limits_{\theta\in\mathbb{R}}\bigg(\frac{1}{h}\int_{\theta}^{\theta+h}\|f(x(t+\gamma_{n}))-f(\tilde{x}(t))\|^{p}dt\bigg)^{1/p}\\
&\leq \bigg(\lim\limits_{h\rightarrow\infty} \sup\limits_{\theta\in\mathbb{R}}\bigg(\frac{1}{h}\int_{\theta}^{\theta+h}L^p_f\|x(t+\gamma_{n})-\tilde{x}(t)\|^{p}dt\bigg)^{1/p}\\
&\leq L_f\|x(t+\gamma_{n})-\tilde{x}(t)\|_{W^p}
\end{align*}
and
 $$\|f(\tilde{x}(t-\gamma_{n}))-f({x}(t))\|_{W^p}\leq L_f\|\tilde{x}(t-\gamma_{n})-{x}(t)\|_{W^p},$$
 where $L_f$ is the Lipschitz constant and $\tilde{x}$ is mentioned in Definition \ref{zs}. Then, the conclusion follows immediately.
The proof is completed.
\end{proof}

\begin{proposition}\label{lemxl}
Let $\{\phi_n\}\subset W^p_{AA}(\mathbb{R},\mathbb{X})$ be a sequence such that $\lim\limits_{n\rightarrow\infty} \phi_n= \phi$  regarding the seminorm $\|\cdot\|_{W^p}$.
Then, $\phi\in W^p_{AA}(\mathbb{R},\mathbb{X})$.
\end{proposition}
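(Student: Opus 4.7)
The plan is to establish closure of $W^p_{AA}(\mathbb{R},\mathbb{X})$ under $\|\cdot\|_{W^p}$ by a diagonal extraction argument, exploiting the Banach-like properties of $W^p_B(\mathbb{R},\mathbb{X})$ proved in Theorem \ref{lem12} together with the norm-preservation identity from Proposition \ref{lem23}.

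First I would observe that $\phi\in W^p_B(\mathbb{R},\mathbb{X})$: each $\phi_n$ is $W^p$-bounded (Definition \ref{zs}), and convergence in $\|\cdot\|_{W^p}$ combined with Lemma \ref{lem11} and Theorem \ref{lem12} forces the limit into $W^p_B(\mathbb{R},\mathbb{X})$. Then, given an arbitrary sequence $\{\gamma'_n\}\subset\mathbb{R}$, I would build nested subsequences: apply Definition \ref{zs} to $\phi_1$ to extract $\{\gamma^{(1)}_n\}\subset\{\gamma'_n\}$ and obtain a limit $\widetilde{\phi}_1$; apply it to $\phi_2$ along $\{\gamma^{(1)}_n\}$ to extract $\{\gamma^{(2)}_n\}$ and obtain $\widetilde{\phi}_2$; continue. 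Finally, set $\gamma_n=\gamma^{(n)}_n$ (Cantor diagonal). Because of the nesting, for every fixed $k$ the tail $\{\gamma_n\}_{n\geq k}$ is a subsequence of $\{\gamma^{(k)}_n\}$, so $\|\phi_k(\cdot+\gamma_n)-\widetilde{\phi}_k(\cdot)\|_{W^p}\to 0$ and $\|\widetilde{\phi}_k(\cdot-\gamma_n)-\phi_k(\cdot)\|_{W^p}\to 0$.

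The central step is to construct the candidate limit $\widetilde{\phi}$ for $\phi$. Using Lemma \ref{lem1}, each $\phi_k-\phi_m$ belongs to $W^p_{AA}(\mathbb{R},\mathbb{X})$, and its translation limit along the common diagonal $\{\gamma_n\}$ equals $\widetilde{\phi}_k-\widetilde{\phi}_m$. Proposition \ref{lem23} then yields
\begin{align*}
\|\widetilde{\phi}_k-\widetilde{\phi}_m\|_{W^p}=\|\phi_k-\phi_m\|_{W^p},
\end{align*}
so $\{\widetilde{\phi}_k\}$ inherits the Cauchy property of $\{\phi_k\}$ in the seminorm. By Theorem \ref{lem12}, there is $\widetilde{\phi}\in W^p_B(\mathbb{R},\mathbb{X})$ with $\|\widetilde{\phi}_k-\widetilde{\phi}\|_{W^p}\to 0$.

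It remains to verify the two Weyl limits with this $\widetilde{\phi}$. Using the triangle inequality and Lemma \ref{lem21} (translation invariance of $\|\cdot\|_{W^p}$),
\begin{align*}
\|\phi(\cdot+\gamma_n)-\widetilde{\phi}(\cdot)\|_{W^p}\leq \|\phi-\phi_k\|_{W^p}+\|\phi_k(\cdot+\gamma_n)-\widetilde{\phi}_k(\cdot)\|_{W^p}+\|\widetilde{\phi}_k-\widetilde{\phi}\|_{W^p}.
\end{align*}
For given $\epsilon>0$ I would first fix $k$ large to make the first and third terms below $\epsilon/3$, then push $n\to\infty$ to drive the middle term below $\epsilon/3$. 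The symmetric estimate for $\|\widetilde{\phi}(\cdot-\gamma_n)-\phi(\cdot)\|_{W^p}$ is handled identically. The main obstacle is the justification that $\widetilde{\phi}_k-\widetilde{\phi}_m$ really is the limit function associated with $\phi_k-\phi_m$ along $\{\gamma_n\}$; this is where the nested diagonal construction is essential, since only a common subsequence lets us add the two limits. Everything else is a standard three-$\epsilon$ argument.
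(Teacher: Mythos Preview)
Your proof is correct and follows essentially the same strategy as the paper's: diagonal extraction of a common subsequence, showing the auxiliary sequence $\{\widetilde{\phi}_k\}$ is Cauchy in $W^p_B(\mathbb{R},\mathbb{X})$, invoking Theorem \ref{lem12} to obtain a limit $\widetilde{\phi}$, and finishing with a three-$\epsilon$ argument. The only minor difference is that you appeal to Proposition \ref{lem23} applied to $\phi_k-\phi_m$ to get the equality $\|\widetilde{\phi}_k-\widetilde{\phi}_m\|_{W^p}=\|\phi_k-\phi_m\|_{W^p}$, whereas the paper derives the (sufficient) inequality $\|\widetilde{\phi}_n-\widetilde{\phi}_m\|_{W^p}\leq\|\phi_n-\phi_m\|_{W^p}$ directly from the triangle inequality and translation invariance.
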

\begin{proof}
Let $\{\gamma'_k\}\subset \mathbb{R}$ be an arbitrary sequence. By the
diagonal procedure we can choose a subsequence $\{\gamma_k\}$ such that
\begin{align}\label{abc}
  \lim\limits_{k\rightarrow \infty}\|\phi_n(\cdot+\gamma_k)-\tilde{\phi}_n(\cdot)\|_{W^p}=0
\end{align}
for each $n\in \mathbb{N} $. Noting that
\begin{align}\label{e22a}
&\|\tilde{\phi}_n(\cdot)-\tilde{\phi}_m(\cdot)\|_{W^p}\nonumber\\
\leq&\|\tilde{\phi}_n(\cdot)-\phi_n(\cdot+\gamma_k)\|_{W^p}+\|\phi_n(\cdot+\gamma_k)-\phi_m(\cdot+\gamma_k)\|_{W^p}+\|\phi_m(\cdot+\gamma_k)-\tilde{\phi}_m(\cdot)\|_{W^p}.
\end{align}
For any $\epsilon> 0$, based on the convergence of $\{\phi_n\}$,  we can find an integer $N>0$ such that when $n,m > N,$ for all $k\in \mathbb{N}$, it holds
$$\|\phi_n(\cdot+\gamma_k)-\phi_m(\cdot+\gamma_k)\|_{W^p}<\epsilon,$$
this combined with  \eqref{abc} and \eqref{e22a} implies that $\{\tilde{\phi}_n\}$ is a Cauchy sequence. Hence, by Theorem \ref{lem12} and Proposition \ref{lem23},  we can infer that there is a function $\tilde{\phi}\in W^p_B(\mathbb{R},\mathbb{X})$
such that $\lim\limits_{n\rightarrow \infty}\tilde{\phi}_n=\tilde{\phi}$.

Let us prove now that
$$\lim\limits_{k\rightarrow \infty}\lim \|\phi(\cdot + \gamma_k)-\tilde{\phi}(\cdot)\|_{W^p}=0$$
and
$$\lim\limits_{k\rightarrow \infty}\lim \|\tilde{\phi}(\cdot - \gamma_k)-\phi(\cdot)\|_{W^p}=0.$$

Indeed, for each $n\in \mathbb{N}$, we get
\begin{align*}
 & \|\phi(\cdot+\gamma_k)-\tilde{\phi}(\cdot)\|_{W^p}\\
 \leq& \|\phi(\cdot+\gamma_k)-\phi_n(\cdot+\gamma_k)\|_{W^p}+\|\phi_n(\cdot+\gamma_k)-\tilde{\phi}_n(\cdot)\|_{W^p}+\|\tilde{\phi}_n(\cdot)-\tilde{\phi}(\cdot)\|_{W^p}.
\end{align*}
Again, by the  convergence of $\{\phi_n\}$,
for any $\epsilon > 0,$ we can find some integer $N_0>0$ such that  for every  $k\in \mathbb{N}$,
$$\|\phi(\cdot+\gamma_k)-\phi_{N_0}(\cdot+\gamma_k)\|_{W^p}<\epsilon\quad \mathrm{and} \quad \|\tilde{\phi}_{N_0}(\cdot)-\tilde{\phi}(\cdot)\|_{W^p}<\epsilon.$$
Consequently,  for every  $k\in \mathbb{N}$, we have
$$\|\phi(\cdot+\gamma_k)-\tilde{\phi}(\cdot)\|<2\epsilon+\|\phi_{N_0}(\cdot+\gamma_k)-\tilde{\phi}_{N_0}(\cdot)\|_{W^p}.$$

Obviously, we can find some integer $N_1 = N(N_0)>0$
such that
$$\|\phi_{N_0}(\cdot+\gamma_k)-\tilde{\phi}_{N_0}(\cdot)\|_{W^p}<\epsilon$$
for every $k > N_1$.

Hence,
$$\|\phi(\cdot+\gamma_k)-\tilde{\phi}(\cdot)\|_{W^p}<3\epsilon$$
for $k>N_1$. That is, we have proven that
$$\|\phi(\cdot+\gamma_k)-\tilde{\phi}(\cdot)\|_{W^p}=0.$$

By the same method, one can show that
$$\lim\limits_{k\rightarrow \infty}\|\tilde{\phi}(\cdot-\gamma_k)-\phi(\cdot)\|_{W^p}=0.$$
 The proof is complete.
\end{proof}

\begin{theorem}\label{th2}
$((W^p_{AA}(\mathbb{R},\mathbb{X}),\|\cdot\|_{{W^p}})$ is a linear space, which is complete regarding the seminorm $\|\cdot\|_{{W^{p}}}$.
\end{theorem}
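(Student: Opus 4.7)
The plan is to reduce this theorem to the three substantive results already proved in the excerpt: Lemma \ref{lem1} (linear-space structure), Theorem \ref{lem12} (completeness of the ambient space $W^p_B(\mathbb{R},\mathbb{X})$), and Proposition \ref{lemxl} (closedness of $W^p_{AA}$ under $W^p$-seminorm limits). There is essentially no new analytic content; the task is bookkeeping.

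First I would dispatch the linear-space claim: by Definition \ref{zs}, $W^p_{AA}(\mathbb{R},\mathbb{X})\subset W^p_B(\mathbb{R},\mathbb{X})$, and Lemma \ref{lem1} gives closure under addition and scalar multiplication, so $W^p_{AA}(\mathbb{R},\mathbb{X})$ is a linear subspace of the seminormed space $(W^p_B(\mathbb{R},\mathbb{X}),\|\cdot\|_{W^p})$. In particular the seminorm $\|\cdot\|_{W^p}$ restricts to a seminorm on $W^p_{AA}(\mathbb{R},\mathbb{X})$.

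Second, I would establish completeness. Let $\{\phi_n\}\subset W^p_{AA}(\mathbb{R},\mathbb{X})$ be a Cauchy sequence with respect to $\|\cdot\|_{W^p}$. Since $W^p_{AA}(\mathbb{R},\mathbb{X})\subset W^p_B(\mathbb{R},\mathbb{X})$, $\{\phi_n\}$ is also Cauchy in $W^p_B(\mathbb{R},\mathbb{X})$, and Theorem \ref{lem12} produces a limit $\phi\in W^p_B(\mathbb{R},\mathbb{X})$ satisfying $\lim_{n\to\infty}\|\phi_n-\phi\|_{W^p}=0$. To upgrade $\phi$ from merely being $W^p$-bounded to being Weyl almost automorphic, I invoke Proposition \ref{lemxl}: a $W^p$-seminorm limit of a sequence in $W^p_{AA}(\mathbb{R},\mathbb{X})$ lies in $W^p_{AA}(\mathbb{R},\mathbb{X})$. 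Hence $\phi\in W^p_{AA}(\mathbb{R},\mathbb{X})$, and $\{\phi_n\}$ converges to $\phi$ in this space, proving completeness.

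Since every step is a direct citation of a previously proved result, I do not anticipate any obstacle; the only mild subtlety to flag is that $\|\cdot\|_{W^p}$ is a genuine seminorm (not a norm, as already hinted by Example \ref{ex1a} where functions with $\|\alpha\|_{W^p}=0$ appear), so ``complete with respect to the seminorm'' must be understood in the usual sense that every Cauchy sequence has a limit in the space, with uniqueness only up to the null subspace $\{u:\|u\|_{W^p}=0\}$. I would state this briefly to avoid any confusion but would not elaborate further.
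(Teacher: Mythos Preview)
Your proposal is correct and matches the paper's own argument essentially verbatim: the paper likewise invokes the completeness of $W^p_B(\mathbb{R},\mathbb{X})$ (Theorem \ref{lem12}) and then observes via Proposition \ref{lemxl} that $W^p_{AA}(\mathbb{R},\mathbb{X})$ is closed in it, hence complete. Your explicit mention of Lemma \ref{lem1} for the linear-space claim and your remark on the seminorm-vs-norm subtlety are welcome clarifications that the paper leaves implicit.
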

\begin{proof}
According to Lemma \ref{lem12}, $W^p_{B}(\mathbb{R},\mathbb{X})$  is complete regarding  the seminorm $\|\cdot\|_{{W^{p}}}$.
Since $W^p_{AA}(\mathbb{R},\mathbb{X})
\subset W^p_{B}(\mathbb{R},\mathbb{X})$, by Proposition \ref{lemxl}, $W^p_{AA}(\mathbb{R},\mathbb{X})$ is a closed subset of $W^p_{B}(\mathbb{R},\mathbb{X})$. Consequently, $W^p_{AA}(\mathbb{R},\mathbb{X})$ is complete regarding the seminorm $\|\cdot\|_{{W^{p}}}$. The proof is completed.
\end{proof}

The following is the definition of a Weyl almost automorphic function that depends on a  parameter.
\begin{definition} \label{def5}
A function $f:\mathbb{R}\times \mathbb{X}\rightarrow \mathbb{Y},(t,x)\mapsto f(t,x)$ with $f(\cdot,x)\in W^p_B(\mathbb{R},\mathbb{Y})$ for each $x\in \mathbb{X}$, is said to be $p$-th Weyl almost automorphic in $t\in \mathbb{R}$ uniformly in $x\in \mathbb{X}$ if for every bounded subset $\mathbb{B}\subset \mathbb{X}$ and every sequence $\{\gamma'_{n}\}\subset \mathbb{R}$, there exist a subsequence $\{\gamma_{n}\}\subset \{\gamma'_{n}\}$ and a function $\tilde{f}:\mathbb{R}\times \mathbb{X} \rightarrow \mathbb{Y}$ such that
$$\lim_{n\rightarrow\infty}\|f(\cdot+\gamma_{n},x)-\tilde{f}(\cdot,x)\|_{W^p}=0$$
 and
$$\lim_{n\rightarrow\infty}\|\tilde{f}(\cdot-\gamma_{n},x)-f(\cdot,x)\|_{W^p}=0$$
uniformly in $x\in \mathbb{B}$.
The collection of these functions will be  denoted by $W^p_{AA}(\mathbb{R}\times \mathbb{X},\mathbb{Y})$.
\end{definition}

\begin{lemma}\label{lm28}
If $f\in W^p_{AA}(\mathbb{R}\times \mathbb{X},\mathbb{Y})$ and there exists a constant $L_f>0$ such that $\|f(t,x)-f(t,y)\|_\mathbb{Y}\leq L_f\|x-y\|_\mathbb{Y}$ for all $t\in \mathbb{R}, x,y\in \mathbb{X}$, then $\| \tilde{f}(t,x)- \tilde{f}(t,y)\|_{W^p}\leq L_f\|x-y\|_{W^p}$,
where $\tilde{f}$ is mentioned in Definition \ref{def5}.
\end{lemma}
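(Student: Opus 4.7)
The strategy is a routine three-term triangle inequality built around the approximating subsequence supplied by Definition \ref{def5}.

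First, fix arbitrary $x,y\in\mathbb{X}$ and take $\mathbb{B}=\{x,y\}$, which is a bounded subset of $\mathbb{X}$. Choose any sequence $\{\gamma'_n\}\subset\mathbb{R}$; invoking the hypothesis $f\in W^p_{AA}(\mathbb{R}\times\mathbb{X},\mathbb{Y})$ for this $\mathbb{B}$, extract a \emph{common} subsequence $\{\gamma_n\}\subset\{\gamma'_n\}$ along which
$\|f(\cdot+\gamma_n,x)-\tilde f(\cdot,x)\|_{W^p}\to 0$ and $\|f(\cdot+\gamma_n,y)-\tilde f(\cdot,y)\|_{W^p}\to 0$ as $n\to\infty$. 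The fact that one subsequence works simultaneously at both parameter values is exactly the uniformity-on-bounded-sets clause in Definition \ref{def5}, and is the one point where a bit of care is required.

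Next, apply the triangle inequality for the seminorm $\|\cdot\|_{W^p}$ by inserting $f(\cdot+\gamma_n,x)$ and $f(\cdot+\gamma_n,y)$:
\begin{align*}
\|\tilde f(\cdot,x)-\tilde f(\cdot,y)\|_{W^p}
\leq{}& \|\tilde f(\cdot,x)-f(\cdot+\gamma_n,x)\|_{W^p}\\
&{}+\|f(\cdot+\gamma_n,x)-f(\cdot+\gamma_n,y)\|_{W^p}\\
&{}+\|f(\cdot+\gamma_n,y)-\tilde f(\cdot,y)\|_{W^p}.
\end{align*}
For the middle term, the pointwise Lipschitz bound $\|f(s+\gamma_n,x)-f(s+\gamma_n,y)\|\leq L_f\|x-y\|$ holds for every $s\in\mathbb{R}$; substituting into the definition of the Weyl seminorm (and, if desired, invoking Lemma \ref{lem21} to strip off the translation by $\gamma_n$) yields
\begin{align*}
\|f(\cdot+\gamma_n,x)-f(\cdot+\gamma_n,y)\|_{W^p}\leq L_f\|x-y\|_{W^p},
\end{align*}
where $x-y$ is understood as the constant-valued function, whose Weyl seminorm coincides with the $\mathbb{X}$-norm of $x-y$.

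Finally, pass to the limit $n\to\infty$ in the three-term inequality; the first and third terms vanish by the choice of $\{\gamma_n\}$ while the middle estimate is independent of $n$, leaving the desired conclusion $\|\tilde f(\cdot,x)-\tilde f(\cdot,y)\|_{W^p}\leq L_f\|x-y\|_{W^p}$. The main (mild) obstacle is the initial step of guaranteeing a common extracting subsequence for both $x$ and $y$; once that is in hand, the Lipschitz transfer to $\tilde f$ is immediate.
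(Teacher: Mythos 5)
Your proof is correct and follows essentially the same route as the paper's: a three-term triangle inequality inserting the translates $f(\cdot+\gamma_n,x)$ and $f(\cdot+\gamma_n,y)$, the Lipschitz bound on the middle term, and passage to the limit. You are in fact slightly more careful than the paper (which argues pointwise in the $\mathbb{Y}$-norm and leaves the passage to the $W^p$ seminorm and the common-subsequence point implicit), but the underlying argument is identical.
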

\begin{proof}For any sequence $\{\gamma_n\}$, since
\begin{align*}
&\| \tilde{f}(t,x)- \tilde{f}(t,y)\|_\mathbb{Y}\\
\leq &\| f(t+\gamma_{n},x)- \tilde{f}(t,x)\|_\mathbb{Y}+\| f(t+\gamma_{n},x)- f(t+\gamma_{n},y)\|_\mathbb{Y}+\| f(t+\gamma_{n},y)- \tilde{f}(t,y)\|_\mathbb{Y}\\
\leq &\| f(t+\gamma_{n},x)- \tilde{f}(t,x)\|_\mathbb{Y}+L_f\|x- y\|+\| f(t+\gamma_{n},y)- \tilde{f}(t,y)\|_\mathbb{Y},
\end{align*}
the conclusion of the lemma follows immediately. The proof is done.
\end{proof}

Now we state and prove the composition theorem for Weyl almost automorphic functions depending on a parameter.

 \begin{theorem}\label{lem212}
Under the conditions of Lemma \ref{lm28}, if $g\in W^p_{AA}(\mathbb{R},\mathbb{X})\cap \mathcal{L}^\infty(\mathbb{R},\mathbb{X})$, then $f(\cdot,g(\cdot))$ belongs to $W^p_{AA}(\mathbb{R},\mathbb{Y}).$
\end{theorem}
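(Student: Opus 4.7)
The plan is to adapt the classical three-way triangle-inequality argument used in composition theorems for Stepanov-type almost automorphic functions to the Weyl seminorm setting. Fix an arbitrary sequence $\{\gamma'_n\} \subset \mathbb{R}$; since $g \in \mathcal{L}^\infty(\mathbb{R}, \mathbb{X})$ its essential range lies in a closed ball $\mathbb{B} \subset \mathbb{X}$, so applying Definition \ref{def5} with this $\mathbb{B}$, Definition \ref{zs}, and a diagonal procedure, one extracts a common subsequence $\{\gamma_n\}$ along which $\|g(\cdot+\gamma_n) - \tilde{g}\|_{W^p} \to 0$ and $\|f(\cdot+\gamma_n, x) - \tilde{f}(\cdot, x)\|_{W^p} \to 0$ uniformly for $x \in \mathbb{B}$, together with their reverse restorations; a standard sub-subsequence argument on compact intervals also ensures $\tilde{g}$ takes values in $\mathbb{B}$ essentially. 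The aim is to show $\|f(\cdot+\gamma_n, g(\cdot+\gamma_n)) - \tilde{f}(\cdot, \tilde{g}(\cdot))\|_{W^p} \to 0$, which I would split pointwise as $A_n(t) + B_n(t)$ with $A_n(t) = f(t+\gamma_n, g(t+\gamma_n)) - f(t+\gamma_n, \tilde{g}(t))$ and $B_n(t) = f(t+\gamma_n, \tilde{g}(t)) - \tilde{f}(t, \tilde{g}(t))$. The uniform Lipschitz hypothesis immediately gives $\|A_n(t)\|_\mathbb{Y} \leq L_f\|g(t+\gamma_n) - \tilde{g}(t)\|_\mathbb{X}$, so $\|A_n\|_{W^p} \to 0$.

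The nontrivial term is $B_n$, since Definition \ref{def5} provides uniform Weyl convergence only for \emph{constant} parameters, whereas here $\tilde{g}(t)$ is $t$-dependent. I would bridge the gap by a finite-cover approximation: for $\delta > 0$, cover $\mathbb{B}$ by finitely many balls $\{B(x_k, \delta)\}_{k=1}^{K(\delta)}$, form the measurable partition $E_k = \tilde{g}^{-1}(B(x_k, \delta)) \setminus \bigcup_{j<k} E_j$, and set $g_\delta(t) = \sum_k x_k \mathbf{1}_{E_k}(t)$, so that $\|\tilde{g} - g_\delta\|_\infty \leq \delta$. Further split $B_n = B_n^{(1)} + B_n^{(2)} + B_n^{(3)}$ through $g_\delta$; the Lipschitz hypothesis gives $\|B_n^{(1)}(t)\|_\mathbb{Y} \leq L_f \delta$ pointwise, while partitioning the integral along $\{E_k\}$ yields
\begin{align*}
\bigl(\|B_n^{(2)}\|_{S_h^p}\bigr)^p \leq \sum_{k=1}^{K(\delta)} \bigl(\|f(\cdot+\gamma_n, x_k) - \tilde{f}(\cdot, x_k)\|_{S_h^p}\bigr)^p,
\end{align*}
so the uniform-in-$\mathbb{B}$ Weyl convergence forces $\|B_n^{(2)}\|_{W^p} \to 0$ as $n \to \infty$.

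The third piece $B_n^{(3)} = \tilde{f}(\cdot, g_\delta) - \tilde{f}(\cdot, \tilde{g})$ is $n$-independent and is the crux of the argument: a naive triangle estimate through an auxiliary $f(\cdot+\gamma_n, \cdot)$ (in the spirit of Lemma \ref{lm28}) returns a circular inequality $\|B_n\|_{W^p} \leq 2L_f\delta + 2\|B_n^{(2)}\|_{W^p} + \|B_n\|_{W^p}$ and does not yield any useful bound. The hard part of the proof will be to preface the composition argument by an upgrade of Lemma \ref{lm28} to the pointwise a.e.\ bound $\|\tilde{f}(t, x) - \tilde{f}(t, y)\|_\mathbb{Y} \leq L_f\|x - y\|_\mathbb{X}$ for every $x, y \in \mathbb{X}$ and a.e.\ $t$; this can be achieved by fixing the Weyl-limit representative $\tilde{f}(\cdot, x)$ on a common full-measure set for $x$ running through a countable dense subset of $\mathbb{X}$ and then Lipschitz-extending in $x$, consistency with the original Weyl limit being guaranteed by the uniform Lipschitz property of $f$. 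With this pointwise bound in hand, $\|B_n^{(3)}\|_{W^p} \leq L_f \delta$, hence $\limsup_n \|B_n\|_{W^p} \leq 2 L_f \delta$; letting $\delta \to 0$ completes the forward convergence, and the symmetric restoration $\|\tilde{f}(\cdot-\gamma_n, \tilde{g}(\cdot-\gamma_n)) - f(\cdot, g(\cdot))\|_{W^p} \to 0$ follows by running the same scheme with the roles of $(f, g)$ and $(\tilde{f}, \tilde{g})$ interchanged.
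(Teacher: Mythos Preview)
Your two-term split $A_n + B_n$ matches the paper's exactly; the divergence is in how $B_n = f(\cdot+\gamma_n,\tilde g(\cdot)) - \tilde f(\cdot,\tilde g(\cdot))$ is handled. The paper does \emph{not} introduce any step-function approximation: it simply reads off $\|B_n\|_{W^p} < \epsilon$ as a direct instance of the uniform bound $\|f(\cdot+\gamma_n,x) - \tilde f(\cdot,x)\|_{W^p} < \epsilon$ for $x \in \mathbb{B}$ supplied by Definition~\ref{def5}, substituting the time-dependent value $\tilde g(t)$ for the constant parameter $x$; in the reverse direction it similarly invokes Lemma~\ref{lm28} with the variable arguments $\tilde g(\cdot-\gamma_n)$ and $g(\cdot)$ in the two slots. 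You are right that neither substitution is literally licensed by Definition~\ref{def5} or Lemma~\ref{lm28}, both of which speak only of constant second arguments; your finite-cover route is the standard way to make such steps honest, and your treatment of $B_n^{(1)}$ and $B_n^{(2)}$ is correct.

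The gap in your own outline is the handling of $B_n^{(3)}$. Your plan requires a pointwise-a.e.\ Lipschitz inequality $\|\tilde f(t,x)-\tilde f(t,y)\|_{\mathbb Y}\le L_f\|x-y\|$, but Lemma~\ref{lm28} only yields the seminorm bound $\|\tilde f(\cdot,x)-\tilde f(\cdot,y)\|_{W^p}\le L_f\|x-y\|$, and these are genuinely different because $\|\cdot\|_{W^p}$-null functions need not vanish a.e.\ (any compactly supported $L^p$ function has zero Weyl seminorm). Consequently, for $x_m,x_{m'}$ in your countable dense set the chosen representatives $\tilde f(\cdot,x_m)$ may fail to be $L_f$-Lipschitz in $m$ on any common full-measure set, so the ``fix representatives, then Lipschitz-extend'' step does not go through as stated. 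You would need a separate argument showing that, among all Weyl-equivalent choices of $\tilde f(\cdot,x)$, a coherent pointwise-Lipschitz selection exists; neither your sketch nor the paper supplies one.
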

\begin{proof}
By the definition of the seminorm $\|\cdot\|_{W^p}$ and  the Lipschitz condition, one can easily see that $f(\cdot,g(\cdot))\in W^p_B(\mathbb{R},\mathbb{Y})$. For every sequence of $\{\gamma'_{n}\}\subset \mathbb{R},$  one can extract a subsequence $\{\gamma_{n}\}$ such that
for every $\epsilon>0$ and every bounded subset $\mathbb{B}\subset \mathbb{X}$ with $\{g(t): t\in \mathbb{R}\}\subset \mathbb{B}$, there exists a positive number $N(\epsilon,\mathbb{B})$ satisfying for $n>N,$
\begin{align}
 \| g(\cdot+\gamma_{n})-\tilde{g}(\cdot)\|_{W^p}<\epsilon,\quad \|f(\cdot+\gamma_{n},x)-\tilde{f}(\cdot,x)\|_{W^p}<\epsilon,\label{g1} \\
 \|\tilde{g}(\cdot-\gamma_{n})-g(\cdot)\|_{W^p}<\epsilon,\quad \|\tilde{f}(\cdot-\gamma_{n},x)-f(\cdot,x)\|_{W^p}<\epsilon,\label{g2}
\end{align}
for all  $x\in \mathbb{B}$.
Therefore, in view of \eqref{g1}, we infer that
\begin{align*}
&\|f(\cdot+\gamma_{n}, g(\cdot+\gamma_{n}))-\tilde{f}(\cdot,\tilde{g}(\cdot))\|_{W^p}\\
\leq&\|f(\cdot+\gamma_{n}, g(\cdot+\gamma_{n}))-f(\cdot+\gamma_{n},\tilde{g}(\cdot))\|_{W^p}+ \|f(\cdot+\gamma_{n}, \tilde{g}(\cdot))-\tilde{f}(\cdot,\tilde{g}(\cdot))\|_{W^p}\\
=&\bigg(\lim\limits_{h\rightarrow\infty} \sup\limits_{\theta\in\mathbb{R}}\frac{1}{h}\int_{\theta}^{\theta+h}\|f(\cdot+\gamma_{n}, g(\cdot+\gamma_{n}))-f(\cdot+\gamma_{n},\tilde{g}(\cdot))\|_\mathbb{Y}^{p}dt\bigg)^{1/p}\\
&+ \|f(\cdot+\gamma_{n}, \tilde{g}(\cdot))-\tilde{f}(\cdot,\tilde{g}(\cdot))\|_{W^p} \\
&\leq L_f\bigg(\lim\limits_{h\rightarrow\infty} \sup\limits_{\theta\in\mathbb{R}}\frac{1}{h}\int_{\theta}^{\theta+h}\|g(\cdot+\gamma_{n})-\tilde{g}(\cdot)\|_\mathbb{Y}^{p}dt\bigg)^{1/p}+ \|f(\cdot+\gamma_{n}, \tilde{g}(\cdot))-\tilde{f}(\cdot,\tilde{g}(\cdot))\|_{W^p}^{p}\\
<&(L_f+1)\epsilon,
\end{align*}
that is, $\lim\limits_{n\rightarrow\infty}\|f(\cdot+\gamma_{n}, g(\cdot+\gamma_{n}))-\tilde{f}(\cdot,\tilde{g}(\cdot))\|_{W^p}=0$.

Similarly, by Lemma \ref{lm28},  \eqref{g2}  and
the fact that
\begin{align*}
& \|\tilde{f}(\cdot-\gamma_{n},\tilde{g}(\cdot-\gamma_{n}))-f(\cdot,{g}(\cdot))\|_{W^p}\\
 \leq&\|\tilde{f}(\cdot-\gamma_{n},\tilde{g}(\cdot-\gamma_{n}))-\tilde{f}(\cdot-\gamma_{n},{g}(\cdot))\|_{W^p}+\|\tilde{f}(\cdot-\gamma_{n},{g}(\cdot))-f(\cdot,{g}(\cdot))\|_{W^p}\\
  \leq&L_f\|\tilde{g}(\cdot-\gamma_{n})-{g}(\cdot)\|_{W^p}+\|\tilde{f}(\cdot-\gamma_{n},{g}(\cdot))-f(\cdot,{g}(\cdot))\|_{W^p},
\end{align*}
one can obtain that
$$\lim\limits_{n\rightarrow\infty}\|\tilde{f}(\cdot-\gamma_{n},\tilde{g}(\cdot-\gamma_{n}))-f(\cdot,{g}(\cdot))\|_{W^p}=0.$$
The proof is completed.
\end{proof}

Before concluding this section, we give the definition of a working space that will be used in Section 4 of this paper and provide the proof of its completeness.

Let $$\mathbb{W}=\{y\in W^{p}_{AA}(\mathbb{R},\mathbb{X})\cap \mathcal{L}^{\infty}(\mathbb{R},\mathbb{X})\}$$
 with the norm $\|\cdot\|_{\mathbb{W}}:=\|\cdot\|_{\infty}$.

\begin{theorem}\label{lem41} The space $(\mathbb{W},\|\cdot\|_{\mathbb{W}})$ is a Banach one.
\end{theorem}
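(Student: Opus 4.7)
The plan is to reduce the completeness of $(\mathbb{W}, \|\cdot\|_{\mathbb{W}})$ to the already established completeness of $(W^p_{AA}(\mathbb{R},\mathbb{X}), \|\cdot\|_{W^p})$ from Theorem \ref{th2} together with the standard completeness of $(\mathcal{L}^\infty(\mathbb{R},\mathbb{X}), \|\cdot\|_\infty)$. First I would verify that $\mathbb{W}$ is a linear space: closure under addition and scalar multiplication in $W^p_{AA}$ is Lemma \ref{lem1}, and the corresponding statement for $\mathcal{L}^\infty$ is elementary.

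Next, let $\{y_n\} \subset \mathbb{W}$ be a Cauchy sequence with respect to $\|\cdot\|_\infty$. By completeness of $\mathcal{L}^\infty(\mathbb{R},\mathbb{X})$, there exists $y \in \mathcal{L}^\infty(\mathbb{R},\mathbb{X})$ such that $\|y_n - y\|_\infty \to 0$. The key comparison is the pointwise inequality
\begin{align*}
\|u\|_{W^p} = \lim_{h\to\infty}\sup_{\theta\in\mathbb{R}}\bigg(\frac{1}{h}\int_\theta^{\theta+h}\|u(s)\|^p\,\mathrm{d}s\bigg)^{1/p} \leq \|u\|_\infty,
\end{align*}
valid for every $u \in \mathcal{L}^\infty(\mathbb{R},\mathbb{X})$. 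Applying this to $u = y_n - y$ yields $\|y_n - y\|_{W^p} \leq \|y_n - y\|_\infty \to 0$, so $y_n \to y$ in the Weyl seminorm as well.

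Since each $y_n$ belongs to $W^p_{AA}(\mathbb{R},\mathbb{X})$, Proposition \ref{lemxl} (closedness of $W^p_{AA}$ under limits in the Weyl seminorm) guarantees that $y \in W^p_{AA}(\mathbb{R},\mathbb{X})$. Combined with $y \in \mathcal{L}^\infty(\mathbb{R},\mathbb{X})$, this gives $y \in \mathbb{W}$, and the convergence $\|y_n - y\|_{\mathbb{W}} = \|y_n - y\|_\infty \to 0$ is already established. This proves $(\mathbb{W}, \|\cdot\|_{\mathbb{W}})$ is a Banach space.

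I do not expect any serious obstacle here: the only non-routine ingredient is the bound $\|\cdot\|_{W^p} \leq \|\cdot\|_\infty$, which transports $\|\cdot\|_\infty$-Cauchy-ness into $\|\cdot\|_{W^p}$-Cauchy-ness and thereby allows Proposition \ref{lemxl} to close the argument. The subtlety to keep in mind is that $\|\cdot\|_{W^p}$ is merely a seminorm, so the $W^p$-limit of $\{y_n\}$ is only determined up to a $W^p$-null function, but here the candidate limit $y$ is already singled out as the genuine $\|\cdot\|_\infty$-limit, so no ambiguity arises.
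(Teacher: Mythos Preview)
Your proof is correct and follows essentially the same route as the paper: obtain the limit $y$ in $\mathcal{L}^\infty$, use the inequality $\|\cdot\|_{W^p}\le\|\cdot\|_\infty$ to pass to convergence in the Weyl seminorm, and then invoke Proposition~\ref{lemxl} to conclude $y\in W^p_{AA}(\mathbb{R},\mathbb{X})$. Your additional remarks on the linear structure and the seminorm ambiguity are fine but not needed for the argument.
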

\begin{proof}
Let $\{\phi_{n}\}$ be  a Cauchy sequence in $\mathbb{W}$.
Since $(\mathcal{L}^{\infty}(\mathbb{R},\mathbb{X}),\|\cdot\|_{\infty})$ is a Banach space and $\{\phi_n\} \subset \mathcal{L}^{\infty}(\mathbb{R},\mathbb{X})$ it follows that there exists $\phi\in \mathcal{L}^{\infty}(\mathbb{R},\mathbb{X})$ such that
\begin{align*}
\lim\limits_{n\rightarrow\infty}\|\phi_{n}-\phi\|_{\infty}=0.
\end{align*}
 Hence, to show $(\mathbb{W},\|\cdot\|_{\mathbb{W}})$ is a Banach space, it suffices to show $\phi\in W^{p}_{AA}(\mathbb{R},\mathbb{X})\}$.
Noting that $\|\phi_{n}-\phi\|_{W^p}\leq\|\phi_{n}-\phi\|_{\infty}$, consequently,
\begin{equation*}
\lim\limits_{n\rightarrow\infty} \|\phi_{n}-\phi\|_{W^p}=0.
\end{equation*}
 By Proposition \ref{lemxl}, we conclude that $\phi\in W^p_{AA}(\mathbb{R},\mathbb{X})$.
 The proof is complete.
\end{proof}

\section{Weyl double-measure pseudo almost automorphic functions}
\setcounter{equation}{0}
\indent

In this section, we will introduce the concept of Weyl double-measure pseudo almost automorphic functions.

Let $\Sigma$  indicate the Lebesgue $\sigma$-field of $\mathbb{R}$ and  $\mathcal{M}_P$ be the set of all positive measures $\mu$ on $\Sigma$ meeting $\mu(\mathbb{R})=+\infty$ and $\mu([a,b])<+\infty$
 for all $a,b\in \mathbb{R}\ \ (a\leq b)$.

Two measures $\mu,\nu \in \mathcal{M}_P$ are said to be equivalent, denoted as $\mu\sim\nu$,   if and only if there exist constants $\alpha,\beta> 0$ and a bounded interval $A$ (eventually $\emptyset$) such that
$\alpha \nu(B) \leq \mu(B)\leq\beta\nu(B)$  for all $B\in \Sigma$ with $B\cap A=\emptyset$.

For $\mu\in \mathcal{M}_P$ and $\sigma\in \mathbb{R}$, we define the measure $\mu_\sigma$ by $\mu_\sigma(A)=\mu(\{a+\sigma, a\in A\})$ for $A\in \Sigma$.

To introduce the Weyl double-measure pseudo almost automorphy, we need the following two fundamental assumptions:
 \begin{itemize}
 \item [$(B_1)$]  Let $\mu,\nu \in \mathcal{M}_P$ meet
 $\limsup\limits_{l\rightarrow\infty}\frac{\mu([-l,l])}{\nu([-l,l])}<\infty.$
  \item [$(B_2)$] For $\mu \in \mathcal{M}_P$ and all $\omega\in \mathbb{R}$, there are $\rho > 0$ and a bounded interval $\mathcal{J}\subset\mathbb{R}$ such that
 $$\mu({a+\omega : a\in A})\leq\rho\mu(A),$$
 provided $A\in \Sigma$ meets $A\cap \mathcal{J}=\emptyset$.
\end{itemize}

\begin{lemma}\cite{dd1}\label{lem210}
Let $\mu\in \mathcal{M}_P$, then $\mu$ satisfies $(B_2)$ if and only if $\mu \sim\mu_\sigma$ for all $\sigma\in \mathbb{R}$.
\end{lemma}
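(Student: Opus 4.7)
The plan is to prove the biconditional by unpacking each definition: $\mu\sim\mu_\sigma$ requires existence of constants $\alpha,\beta>0$ and a bounded interval giving two-sided control $\alpha\mu_\sigma(B)\le\mu(B)\le\beta\mu_\sigma(B)$ off that interval, while $(B_2)$ gives only a one-sided control $\mu(A+\omega)\le\rho\mu(A)$ off some (possibly $\omega$-dependent) bounded interval. The key observation is that $\mu_\sigma(B)=\mu(B+\sigma)$, so $(B_2)$ applied with $\omega=\sigma$ directly yields a one-sided inequality between $\mu$ and $\mu_\sigma$, and reversing the sign of $\omega$ together with a translation of the test set will yield the other inequality.

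For the forward direction, I would fix $\sigma\in\mathbb{R}$ and apply $(B_2)$ twice. With $\omega=\sigma$ there are $\rho_1>0$ and a bounded interval $\mathcal{J}_1$ with $\mu_\sigma(B)=\mu(B+\sigma)\le\rho_1\mu(B)$ whenever $B\cap\mathcal{J}_1=\emptyset$; this gives the lower bound $\mu(B)\ge\rho_1^{-1}\mu_\sigma(B)$. With $\omega=-\sigma$ there are $\rho_2>0$ and a bounded interval $\mathcal{J}_2$ with $\mu(C-\sigma)\le\rho_2\mu(C)$ whenever $C\cap\mathcal{J}_2=\emptyset$; substituting $C=B+\sigma$ transforms the condition $C\cap\mathcal{J}_2=\emptyset$ into $B\cap(\mathcal{J}_2-\sigma)=\emptyset$ and yields $\mu(B)\le\rho_2\mu(B+\sigma)=\rho_2\mu_\sigma(B)$. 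Choosing $\mathcal{J}:=\mathcal{J}_1\cup(\mathcal{J}_2-\sigma)$, which is still bounded, both inequalities hold for every $B\in\Sigma$ with $B\cap\mathcal{J}=\emptyset$, so $\mu\sim\mu_\sigma$ with $\alpha=\rho_1^{-1}$ and $\beta=\rho_2$.

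For the reverse direction, I would fix $\omega\in\mathbb{R}$ and invoke $\mu\sim\mu_\omega$ to obtain $\alpha,\beta>0$ and a bounded interval $A_\omega$ with $\alpha\mu_\omega(B)\le\mu(B)\le\beta\mu_\omega(B)$ for all $B\cap A_\omega=\emptyset$. The left inequality, rewritten as $\mu(B+\omega)=\mu_\omega(B)\le\alpha^{-1}\mu(B)$, is precisely $(B_2)$ with $\rho=\alpha^{-1}$ and $\mathcal{J}=A_\omega$.

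The only point that requires mild care is the forward direction, where the naive choice of exceptional set from $(B_2)$ with $\omega=-\sigma$ must be translated by $-\sigma$ so that it controls the test set $B$ rather than its shift $B+\sigma$; once this translation step is handled, the rest is bookkeeping. I do not anticipate serious measure-theoretic obstacles, since both $\mu$ and $\mu_\sigma$ lie in $\mathcal{M}_P$ by construction and all manipulations are on sets, not on limits.
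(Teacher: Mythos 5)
Your argument is correct. Note that the paper itself gives no proof of this lemma --- it is quoted from the reference \cite{dd1}, where the standard argument is exactly the double application of $(B_2)$ with $\omega=\sigma$ and $\omega=-\sigma$ that you carry out, so your route coincides with the source. The only cosmetic point is that $\mathcal{J}_1\cup(\mathcal{J}_2-\sigma)$ need not be an \emph{interval} when the two pieces are disjoint, whereas the definition of $\mu\sim\nu$ asks for a bounded interval; replacing the union by its convex hull (the smallest interval containing both pieces, still bounded) repairs this instantly, since avoiding the hull forces avoidance of both exceptional sets.
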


\begin{definition}\label{def31}
 Let $\mu,\nu\in \mathcal{M}_P$. A function $\varphi\in W^p_{B}(\mathbb{R}, \mathbb{X})$ belongs to $\mathcal{O}^p(\mathbb{R},\mathbb{X},\mu,\nu)$ if and only if it satisfies
$$ \lim\limits_{l\rightarrow +\infty}\frac{1}{\nu([-l,l])}\int_{-l}^{l} \bigg(\lim\limits_{h\rightarrow +\infty}\frac{1}{h}\int_{\theta}^{\theta+h}\|\varphi(s)\|^pds \bigg)^{\frac{1}{p}}d\mu(\theta)=0.$$
\end{definition}

\begin{definition}\label{def31}
 Let $\mu,\nu\in \mathcal{M}_P$. A function $\phi\in W^p_{B}(\mathbb{R}\times\mathbb{X}, \mathbb{Y})$ belongs to $\mathcal{O}^p(\mathbb{R}\times\mathbb{X},\mathbb{Y},\mu,\nu)$ if and only if it satisfies
$$ \lim\limits_{l\rightarrow +\infty}\frac{1}{\nu([-l,l])}\int_{-l}^{l} \bigg(\lim\limits_{h\rightarrow +\infty}\frac{1}{h}\int_{\theta}^{\theta+h}\|\phi(s,x)\|_\mathbb{Y}^pds \bigg)^{\frac{1}{p}}d\mu(\theta)=0$$
uniformly in $x\in \mathbb{B}$, where $\mathbb{B}$ is any bounded subset of $\mathbb{X}$.
\end{definition}

\begin{definition}
 Let $\mu,\nu\in \mathcal{M}_P$. A function $\varphi\in  \mathcal{L}^\infty(\mathbb{R},\mathbb{X})$ belongs to $\mathcal{E}(\mathbb{R},\mathbb{X},\mu,\nu)$ if and only if it satisfies
$$\lim\limits_{l\rightarrow +\infty}\frac{1}{\nu([-l,l])}\int_{-l}^{l}\|\varphi(t)\|d\mu(t)=0.$$
\end{definition}

\begin{remark}
Let $BC(\mathbb{R},\mathbb{X})$ be the class of all bounded and continuous functions from $\mathbb{R}$ to $\mathbb{X}$, then it is obvious that $BC(\mathbb{R},\mathbb{X})\subset \mathcal{L}^\infty(\mathbb{R},\mathbb{X})$.
\end{remark}

\begin{theorem}
Let $\mu,\nu\in \mathcal{M}_P$ satisfy $(B_1)$. Then $\mathcal{O}^p(\mathbb{R},\mathbb{X}, \mu,\nu)$ is a linear space and
complete regarding seminorm $\|\cdot\|_{W^p}$.
\end{theorem}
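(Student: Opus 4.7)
The plan is to handle linearity by a direct Minkowski estimate and then derive completeness from Theorem \ref{lem12} combined with the growth hypothesis $(B_1)$. Throughout, membership in $W^p_B(\mathbb{R},\mathbb{X})$ is already guaranteed by Lemma \ref{lem11} or by Theorem \ref{lem12}, so the real work is to control the iterated integral defining $\mathcal{O}^p$.

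For linearity, let $\varphi,\psi\in\mathcal{O}^p(\mathbb{R},\mathbb{X},\mu,\nu)$ and $\alpha\in\mathbb{R}$. Minkowski's inequality gives
$$\left(\frac{1}{h}\int_\theta^{\theta+h}\|\alpha\varphi(s)+\psi(s)\|^p\,ds\right)^{1/p}\le|\alpha|\left(\frac{1}{h}\int_\theta^{\theta+h}\|\varphi(s)\|^p\,ds\right)^{1/p}+\left(\frac{1}{h}\int_\theta^{\theta+h}\|\psi(s)\|^p\,ds\right)^{1/p}.$$
I would pass to $h\to\infty$, integrate against $d\mu(\theta)$ over $[-l,l]$, divide by $\nu([-l,l])$, and then let $l\to\infty$; the two limits on the right vanish by hypothesis, proving $\alpha\varphi+\psi\in\mathcal{O}^p$.

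For completeness, let $\{\varphi_n\}$ be a Cauchy sequence in $\mathcal{O}^p$ with respect to $\|\cdot\|_{W^p}$. Since $\mathcal{O}^p\subset W^p_B(\mathbb{R},\mathbb{X})$, Theorem \ref{lem12} yields $\varphi\in W^p_B(\mathbb{R},\mathbb{X})$ with $\|\varphi_n-\varphi\|_{W^p}\to 0$; the task is to check that $\varphi$ satisfies the defining vanishing condition of $\mathcal{O}^p$. Again by Minkowski and the observation that $\bigl(\frac{1}{h}\int_\theta^{\theta+h}\|\varphi-\varphi_n\|^p\,ds\bigr)^{1/p}\le\sup_{\theta\in\mathbb{R}}\bigl(\frac{1}{h}\int_\theta^{\theta+h}\|\varphi-\varphi_n\|^p\,ds\bigr)^{1/p}$ for every $h$, passing to $h\to\infty$ gives the uniform-in-$\theta$ bound by $\|\varphi-\varphi_n\|_{W^p}$. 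Consequently
$$\frac{1}{\nu([-l,l])}\int_{-l}^{l}\left(\lim_{h\to\infty}\frac{1}{h}\int_\theta^{\theta+h}\|\varphi(s)\|^p\,ds\right)^{1/p}d\mu(\theta)\le\frac{\mu([-l,l])}{\nu([-l,l])}\|\varphi-\varphi_n\|_{W^p}+I_n(l),$$
where $I_n(l)$ is the analogous integral with $\varphi$ replaced by $\varphi_n$. Given $\epsilon>0$, I would first use $(B_1)$ (which bounds $\mu([-l,l])/\nu([-l,l])$ for large $l$) to fix $n$ so large that the first term is below $\epsilon/2$; then, with this $n$ frozen, the fact that $\varphi_n\in\mathcal{O}^p$ makes $I_n(l)<\epsilon/2$ for $l$ sufficiently large.

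The main obstacle I anticipate is the subtlety that the inner expression $\lim_{h\to\infty}\frac{1}{h}\int_\theta^{\theta+h}\|\varphi(s)\|^p\,ds$ is a pointwise-in-$\theta$ limit whose existence is not automatic for arbitrary elements of $W^p_B(\mathbb{R},\mathbb{X})$, while the Weyl seminorm takes a supremum in $\theta$ before the limit in $h$. To dominate the inner limit for $\varphi-\varphi_n$ by $\|\varphi-\varphi_n\|_{W^p}$ rigorously, one should either read the inner limit as a $\limsup$ (or as a $\liminf$, since both are dominated by the Weyl seminorm via the sup-before-limit inequality above), or restrict to representatives where the limit exists. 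This interpretive point must be nailed down at the start, after which the rest of the argument proceeds by the two-step $\epsilon/2$ splitting described above.
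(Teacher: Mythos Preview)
Your proposal is correct and follows essentially the same route as the paper: reduce completeness to closedness in $W^p_B(\mathbb{R},\mathbb{X})$ via Theorem~\ref{lem12}, then split the $\mathcal{O}^p$-integral using a triangle-type inequality and control the difference term by $\|\varphi-\varphi_n\|_{W^p}$ together with the ratio bound from $(B_1)$. The only cosmetic difference is that the paper uses the convexity estimate $\|\varphi\|^p\le 2^{p-1}(\|\varphi-\varphi_N\|^p+\|\varphi_N\|^p)$ where you invoke Minkowski directly, and your explicit discussion of the pointwise-versus-supremum limit issue is in fact more careful than the paper, which silently makes the same identification.
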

\begin{proof}
It is easy to see that $\mathcal{O}^p(\mathcal{R},\mathbb{X}, \mu,\nu)$ is a linear space. To finish the proof, it suffices to prove  that $\mathcal{O}^p(\mathcal{R},\mathbb{X}, \mu,\nu)$ is closed in $W^p_{B}(\mathbb{R},\mathbb{X})$.

 Let $\{\varphi_n\}\subset \mathcal{O}^p(\mathbb{R},\mathbb{X}, \mu,\nu)$ such that $\lim\limits_{n\rightarrow \infty}\|\varphi_n-\varphi\|_{W^p}=0$. Then, we have for any $\epsilon>0$, there exists an $N>0$ such that for any $n\geq N$,
 \begin{align*}
\|\varphi(\cdot)-\varphi_n(\cdot)\|^p_{W^p}<\epsilon.
\end{align*}

 In addition, since $\varphi_n\in \mathcal{O}^p(\mathbb{R},\mathbb{X}, \mu,\nu),$ for the above $\epsilon>0$,  there exists  $L_0(n)>0$ such that for any $l\geq L_0$, it holds
\begin{align*}
  \frac{1}{\nu([-l,l])}\int_{-l}^{l}\bigg(\lim\limits_{h\rightarrow \infty}\frac{1}{h}\int_{\theta}^{\theta+h}\|\varphi_n(s)\|^pds \bigg)^{\frac{1}{p}} d\mu(\theta)<\epsilon.
\end{align*}

Consequently, for $l\geq L_0(N)$, by the fact that $(a+b)^{\frac{1}{p}}\leq  a^{\frac{1}{p}}+b^{\frac{1}{p}}$ for all $a,b\geq 0$ and $p\geq 1$,  we deduce that
 \begin{align*}
 & \frac{1}{\nu([-l,l])}\int_{-l}^{l}\bigg(\lim\limits_{h\rightarrow \infty} \frac{1}{h}\int_{\theta}^{\theta+h}\|\varphi(s)\|^pds \bigg)^{\frac{1}{p}}d\mu(\theta)\\
  \leq&  \frac{1}{\nu([-l,l])}\int_{-l}^{l}\bigg(2^{p-1}\lim\limits_{h\rightarrow \infty} \frac{1}{h}\int_{\theta}^{\theta+h}\|\varphi(s)-\varphi_N(s)\|^pds \bigg)^{\frac{1}{p}}d\mu(\theta)\\
  &+ \frac{1}{\nu([-l,l])}\int_{-l}^{l}\bigg(2^{p-1}\lim\limits_{h\rightarrow \infty} \frac{1}{h}\int_{\theta}^{\theta+h}\|\varphi_N(s)\|^pds \bigg)^{\frac{1}{p}} d\mu(\theta)\\
  \leq& 2^{\frac{p-1}{p}} \frac{1}{\nu([-l,l])}\int_{-l}^{l}\|\varphi(\cdot)-\varphi_N(\cdot)\|_{W^p}   d\mu(\theta)\\
  &+  2^{\frac{p-1}{p}}\frac{1}{\nu([-l,l])}\int_{-l}^{l}\bigg(\lim\limits_{h\rightarrow \infty} \frac{1}{h}\int_{\theta}^{\theta+h}\|\varphi_N(s)\|^pds \bigg)^{\frac{1}{p}} d\mu(\theta)\\
   \leq& 2^{\frac{p-1}{p}}\bigg[ \frac{\mu([-l,l])}{\nu([-l,l])}+1\bigg]\epsilon.
 \end{align*}
Hence, $ \varphi \in \mathcal{O}^p(\mathcal{R},\mathbb{X}, \mu,\nu)$. This finishes the proof.
\end{proof}

\begin{theorem}\label{gx}
Let $\mu,\nu\in \mathcal{M}_P$ and $\mathcal{J}$ be a bounded interval (eventually $\emptyset$). Assume that $(B_1)$ holds and
$\varphi\in W^p_{B}(\mathbb{R},\mathbb{X})$. Then the following assertions are equivalent:
\begin{itemize}
  \item [$(i)$] Function $\varphi\in \mathcal{O}^p(\mathbb{R},\mathbb{X},\mu,\nu)$.
  \item [$(ii)$]
  $\lim\limits_{l\rightarrow +\infty}\frac{1}{\nu([-l,l]\setminus \mathcal{J})}\int_{{[-l,l]\setminus \mathcal{J}}}\Big(\lim\limits_{h\rightarrow +\infty}\frac{1}{h}\int_{\theta}^{\theta+h}\|\varphi(s)\|^pds \Big)^{\frac{1}{p}}d\mu(\theta)=0.$
  \item [$(iii)$] For any $\epsilon>0$, $\lim\limits_{l\rightarrow \infty}\frac{\mu(\{t\in [-l,l]\setminus \mathcal{J}: \|\varphi(t)\|> \epsilon\})}{\nu([-l,l]\setminus \mathcal{J})}=0$.
\end{itemize}
\end{theorem}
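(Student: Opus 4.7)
The plan is to establish the three-way equivalence via a short cycle: (i) $\Leftrightarrow$ (ii) by a ``truncate the bounded piece $\mathcal{J}$'' argument built on the uniform bound supplied by $\varphi\in W^p_B$, and (ii) $\Leftrightarrow$ (iii) via a Chebyshev-style inequality combined with a $\{\|\varphi\|\le\epsilon\}/\{\|\varphi\|>\epsilon\}$ splitting of the domain of integration. Throughout, abbreviate the inner integrand of (i) and (ii) by $G(\theta):=\bigl(\lim_{h\to\infty}\tfrac{1}{h}\int_{\theta}^{\theta+h}\|\varphi(s)\|^{p}\,ds\bigr)^{1/p}$. The hypothesis $\varphi\in W^p_B(\mathbb{R},\mathbb{X})$ yields $G(\theta)\le\|\varphi\|_{W^p}<+\infty$ uniformly in $\theta$. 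The measure $\nu\in\mathcal{M}_P$ with $\nu(\mathbb{R})=+\infty$ gives $\nu([-l,l])\to+\infty$ as $l\to\infty$, and the boundedness of $\mathcal{J}$ gives $\mu(\mathcal{J}),\nu(\mathcal{J})<+\infty$.

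For (i) $\Leftrightarrow$ (ii), pick $l_0$ with $\mathcal{J}\subset[-l_0,l_0]$; for $l\ge l_0$,
$$\Bigl|\int_{-l}^{l}G\,d\mu-\int_{[-l,l]\setminus\mathcal{J}}G\,d\mu\Bigr|\le \|\varphi\|_{W^p}\mu(\mathcal{J}),$$
a fixed constant. Normalized by either $\nu([-l,l])$ or $\nu([-l,l]\setminus\mathcal{J})$, this error term vanishes as $l\to\infty$. Moreover, $\nu([-l,l]\cap\mathcal{J})=\nu(\mathcal{J})$ is eventually constant while the denominator diverges, so $\nu([-l,l]\setminus\mathcal{J})/\nu([-l,l])\to 1$. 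Consequently the two normalized integrals in (i) and (ii) share the same limit.

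For (ii) $\Leftrightarrow$ (iii), abbreviate $A_l^\epsilon:=\{t\in[-l,l]\setminus\mathcal{J}:\|\varphi(t)\|>\epsilon\}$ and let $B_l^\epsilon$ denote its complement in $[-l,l]\setminus\mathcal{J}$. For (iii) $\Rightarrow$ (ii), split the integral in (ii) along $A_l^\epsilon$ and $B_l^\epsilon$: on $A_l^\epsilon$ use $G\le \|\varphi\|_{W^p}$, so the contribution is at most $\|\varphi\|_{W^p}\mu(A_l^\epsilon)/\nu([-l,l]\setminus\mathcal{J})\to 0$ by (iii); on $B_l^\epsilon$ use the pointwise bound $\|\varphi\|\le\epsilon$ (together with the $\nu$-negligible contribution of $A_l^\epsilon$ to the inner $h$-averages) to estimate $G(\theta)\le\epsilon$, producing a normalized contribution $\le C\epsilon$ via the uniform bound on $\mu([-l,l])/\nu([-l,l])$ furnished by $(B_1)$. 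Since $\epsilon>0$ is arbitrary, (ii) follows. For (ii) $\Rightarrow$ (iii), apply a Chebyshev-type inequality $\epsilon\cdot\mu(A_l^\epsilon)\le \int_{A_l^\epsilon}\|\varphi(t)\|\,d\mu(t)$, and then aim to dominate the right-hand side by the Weyl-averaged integral in (ii) before normalizing by $\nu([-l,l]\setminus\mathcal{J})$.

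The central obstacle is the bridge from pointwise $\|\varphi(t)\|$ to the long-range Weyl average $G(t)$ needed for (ii) $\Rightarrow$ (iii): no naive pointwise inequality $\|\varphi(t)\|\le G(t)$ holds in general, so Chebyshev cannot be applied directly. I plan to circumvent this by a reduction to bounded $\varphi$ via truncation, then exploit the Lebesgue differentiation identification $\|\varphi(t)\|=\lim_{h\downarrow 0}\bigl(\tfrac{1}{h}\int_{t}^{t+h}\|\varphi\|^{p}\bigr)^{1/p}$ a.e., and finally chain short-scale and long-scale averages together using $W^p_B$-regularity plus $(B_1)$ to transport the Lebesgue-density-style information supplied by $G$ to the $\mu$-density estimate required by (iii).
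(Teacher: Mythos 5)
Your treatment of (i) $\Leftrightarrow$ (ii) is correct and coincides with the paper's: the contribution of the bounded set $\mathcal{J}$ is $O(1)$ in both the numerator and the denominator, so it washes out under the diverging normalization $\nu([-l,l])\to\infty$. The problem is the equivalence (ii) $\Leftrightarrow$ (iii), and there your proposal does not close. You correctly identify the central obstacle --- there is no pointwise comparison between $\|\varphi(\theta)\|$ and the long-range Weyl average $G(\theta)=\bigl(\lim_{h\to\infty}\frac1h\int_\theta^{\theta+h}\|\varphi(s)\|^p\,ds\bigr)^{1/p}$ --- but the workaround you sketch cannot resolve it. Lebesgue differentiation identifies $\|\varphi(t)\|$ with the limit of \emph{short-scale} averages ($h\downarrow 0$), whereas $G$ is the limit of averages as $h\to\infty$; these two limits are completely decoupled (for $\varphi=\mathbf{1}_{(0,1/2)}$, the paper's own Example \ref{ex1b}, one has $\|\varphi(t)\|=1$ on a set of positive measure while $G\equiv 0$, so the inequality $G(\theta)\ge\epsilon$ on $\{\|\varphi(\theta)\|>\epsilon\}$ that your Chebyshev step would ultimately need is simply false). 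No amount of ``chaining short-scale and long-scale averages'' transports one to the other. Symmetrically, in your (iii) $\Rightarrow$ (ii) direction the bound $G(\theta)\le\epsilon$ on $B_l^\epsilon=\{\|\varphi\|\le\epsilon\}$ is unjustified: the inner $h\to\infty$ average at such a $\theta$ sweeps over all of $[\theta,\theta+h]$, which may meet $\{\|\varphi\|>\epsilon\}$ in a set of proportionally large Lebesgue measure; your parenthetical about the ``$\nu$-negligible contribution of $A_l^\epsilon$ to the inner $h$-averages'' conflates $\mu,\nu$-smallness of $A_l^\epsilon$ relative to $\nu([-l,l])$ with Lebesgue-smallness inside a window of length $h$.

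For what it is worth, the paper's own proof of (ii) $\Leftrightarrow$ (iii) uses exactly the same domain splitting as you do and simply \emph{asserts} the two pointwise bounds $G(\theta)\ge\epsilon$ on $D_l^\epsilon$ and $G(\theta)\le\epsilon$ on $E_l^\epsilon$ without justification, i.e., it silently treats $G(\theta)$ as comparable to $\|\varphi(\theta)\|$. So your diagnosis of where the difficulty sits is sharper than the paper's, but your proposal stops at a plan rather than a proof, and the plan as stated would fail for the reasons above. A genuine repair would require either restating (iii) in terms of the level sets of $G$ rather than of $\|\varphi\|$ (after which Chebyshev for the $\mu$-integral of $G$ gives both directions immediately), or adding hypotheses under which $\|\varphi(\theta)\|$ and $G(\theta)$ are actually comparable. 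As written, (ii) $\Leftrightarrow$ (iii) remains a gap in your argument (and, in fairness, in the paper's).
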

\begin{proof}
Let $D=\nu(\mathcal{J})$ and $E(\theta)=\int_\mathcal{J}\big(\lim\limits_{h\rightarrow +\infty}\frac{1}{h}\int_{\theta}^{\theta+h}\|\varphi(s)\|^pds \big)^{\frac{1}{p}} d\mu(\theta)$ and $F=\mu(\mathcal{J})$.

$(i)\Rightarrow(ii)$. Since $\varphi\in W^p(\mathbb{R},\mathbb{X})$ and $\mathcal{J}$ is bounded, $E(\theta)<\infty$ for all $\theta\in \mathbb{R}$. Hence, from  the fact that $\nu(\mathbb{R})=\infty$ and
\begin{align*}
& \frac{1}{\nu([-l,l]\setminus \mathcal{J})}\int_{{[-l,l]\setminus \mathcal{J}}}\bigg(\lim\limits_{h\rightarrow +\infty}\frac{1}{h}\int_{\theta}^{\theta+h}\|\varphi(s)\|^pds \bigg)^{\frac{1}{p}}d\mu(\theta)\\
 = &\frac{1}{\nu([-l,l])-D}\bigg[\int_{{[-l,l]}}\bigg(\lim\limits_{h\rightarrow +\infty}\frac{1}{h}\int_{\theta}^{\theta+h}\|\varphi(s)\|^pds \bigg)^{\frac{1}{p}}d\mu(\theta)-E(\theta)\bigg]\\
  = &\frac{\nu([-l,l])}{\nu([-l,l])-D}\bigg[\frac{1}{\nu([-l,l])}\int_{{[-l,l]}}\bigg(\lim\limits_{h\rightarrow +\infty}\frac{1}{h}\int_{\theta}^{\theta+h}\|\varphi(s)\|^pds \bigg)^{\frac{1}{p}}d\mu(\theta)-\frac{E(\theta)}{\nu([-l,l])}\bigg],
\end{align*}
  we can infer that $(ii)$ is equivalent to
\begin{align*}
\lim\limits_{l\rightarrow\infty} \frac{1}{\nu([-l,l])}\int_{{[-l,l]}}\bigg(\lim\limits_{h\rightarrow +\infty}\frac{1}{h}\int_{\theta}^{\theta+h}\|\varphi(s)\|^pds \bigg)^{\frac{1}{p}}d\mu(\theta)=0,
\end{align*}
that is $(i)$.

$(iii)\Rightarrow(ii)$. Denote $D^\epsilon_l=\{t\in [-l,l]\setminus \mathcal{J}:\|\varphi(t)\|>\epsilon\}$ and $E^\epsilon_l=\{t\in [-l,l]\setminus \mathcal{J}:\|\varphi(t)\|\leq \epsilon\}$.

Suppose that $(iii)$ holds, that is,
\begin{align*}
  \lim\limits_{l\rightarrow \infty}\frac{\mu(D^\epsilon_l)}{\nu([-l,l]\setminus \mathcal{J})}=0.
\end{align*}
From the equality
\begin{align*}
&\int_{{[-l,l]\setminus \mathcal{J}}}\bigg(\lim\limits_{h\rightarrow +\infty}\frac{1}{h}\int_{\theta}^{\theta+h}\|\varphi(s)\|^pds \bigg)^{\frac{1}{p}}d\mu(\theta)\\
=& \int_{{D^\epsilon_l}}\bigg(\lim\limits_{h\rightarrow +\infty}\frac{1}{h}\int_{\theta}^{\theta+h}\|\varphi(s)\|^pds \bigg)^{\frac{1}{p}}d\mu(\theta)+\int_{{E^\epsilon_l}}\bigg(\lim\limits_{h\rightarrow +\infty}\frac{1}{h}\int_{\theta}^{\theta+h}\|\varphi(s)\|^pds \bigg)^{\frac{1}{p}}d\mu(\theta),
\end{align*}
we deduce that for $l$ sufficiently large
\begin{align*}
& \frac{1}{\nu([-l,l]\setminus \mathcal{J})}\int_{{[-l,l]\setminus \mathcal{J}}}\bigg(\lim\limits_{h\rightarrow +\infty}\frac{1}{h}\int_{\theta}^{\theta+h}\|\varphi(s)\|^pds \bigg)^{\frac{1}{p}}d\mu(\theta)\\
 \leq&2\|\varphi\|_{W^p}\frac{\mu(D^\epsilon_l)}{\nu([-l,l]\setminus \mathcal{J})}+\epsilon \frac{\mu(E^\epsilon_l)}{\nu([-l,l]\setminus \mathcal{J})}\\
 \leq&2\|\varphi\|_{W^p}\frac{\mu(D^\epsilon_l)}{\nu([-l,l]\setminus \mathcal{J})}+\epsilon \frac{\mu([-l,l]\setminus \mathcal{J})}{\nu([-l,l]\setminus \mathcal{J})}\\
 \leq&2\|\varphi\|_{W^p}\frac{\mu(D^\epsilon_l)}{\nu([-l,l]\setminus \mathcal{J})}+\epsilon \frac{\mu([-l,l])-F}{\nu([-l,l])-D}\\
 =&2\|\varphi\|_{W^p}\frac{\mu(D^\epsilon_l)}{\nu([-l,l]\setminus \mathcal{J})}+\epsilon \frac{\mu([-l,l])}{\nu([-l,l])}\frac{1-\frac{F}{\mu([-l,l])}}{1-\frac{D}{\nu([-l,l])}}.
\end{align*}
Since $\mu(\mathbb{R})=\nu(\mathbb{R})=\infty$, then for all $\epsilon>0$, there holds
\begin{align*}
   \limsup\limits_{l\rightarrow \infty}\frac{1}{\nu([-l,l]\setminus \mathcal{J})}\int_{{[-l,l]\setminus \mathcal{J}}}\bigg(\lim\limits_{h\rightarrow +\infty}\frac{1}{h}\int_{\theta}^{\theta+h}\|\varphi(s)\|^pds \bigg)^{\frac{1}{p}}d\mu(\theta)\leq C\epsilon,
\end{align*}
where $C=\lim\limits_{l\rightarrow \infty}\frac{\mu([-l,l])}{\nu([-l,l])}$ is a constant by $(B_1)$.

$(ii)\Rightarrow(iii)$. Assume that $(ii)$ holds. From the following inequality:
\begin{align*}
 & \frac{1}{\nu([-l,l]\setminus \mathcal{J})}\int_{{[-l,l]\setminus \mathcal{J}}}\bigg(\lim\limits_{h\rightarrow +\infty}\frac{1}{h}\int_{\theta}^{\theta+h}\|\varphi(s)\|^pds \bigg)^{\frac{1}{p}}d\mu(\theta)\\
  \geq&\frac{1}{\nu([-l,l]\setminus \mathcal{J})}\int_{D^\epsilon_l}\bigg(\lim\limits_{h\rightarrow +\infty}\frac{1}{h}\int_{\theta}^{\theta+h}\|\varphi(s)\|^pds \bigg)^{\frac{1}{p}}d\mu(\theta)\\
  \geq&\epsilon\frac{\mu(D^\epsilon_l)}{\nu([-l,l]\setminus \mathcal{J})},
\end{align*}
for $l$ sufficiently large, we obtain   $(iii)$.
The proof is done.
\end{proof}

\begin{theorem}\label{n2}
Let $\mu_i,\nu_i\in \mathcal{M}_P, i=1,2$ and $\mu_1\sim\mu_2, \nu_1 \sim \nu_2$. If $(B_1)$ holds, then  $\mathcal{O}^p(\mathbb{R},
\mathbb{X},\mu_1,\nu_1)=\mathcal{O}^p(\mathbb{R},\mathbb{X},
\mu_2,\nu_2)$.
\end{theorem}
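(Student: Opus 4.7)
The plan is to leverage Theorem \ref{gx}, which gives a characterization of membership in $\mathcal{O}^p(\mathbb{R},\mathbb{X},\mu,\nu)$ via the integral over $[-l,l]\setminus\mathcal{J}$ for an arbitrary bounded interval $\mathcal{J}$. The point of characterization $(ii)$ is that it allows us to \emph{excise} the bad set where $\mu_1$ and $\mu_2$ (resp.\ $\nu_1$ and $\nu_2$) may fail to be comparable, so the equivalence of the measures outside a bounded set suffices to transfer the limit condition.

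Concretely, by $\mu_1\sim\mu_2$ there exist $\alpha_1,\beta_1>0$ and a bounded interval $A_1$ with $\alpha_1\mu_2(B)\le \mu_1(B)\le \beta_1\mu_2(B)$ for every $B\in\Sigma$ disjoint from $A_1$; analogously $\nu_1\sim\nu_2$ yields $\alpha_2,\beta_2,A_2$. I would set $\mathcal{J}=A_1\cup A_2$, which is bounded, and note that before doing anything else one needs to check that $(B_1)$ transfers from $(\mu_1,\nu_1)$ to $(\mu_2,\nu_2)$: since the measures agree up to multiplicative constants off the bounded set $A_1\cup A_2$ (and are finite on this bounded set), the ratio $\mu_1([-l,l])/\nu_1([-l,l])$ is bounded iff $\mu_2([-l,l])/\nu_2([-l,l])$ is. Hence Theorem \ref{gx} applies to both pairs with the same $\mathcal{J}$.

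Next, for $\varphi\in W^p_B(\mathbb{R},\mathbb{X})$ I set
\begin{equation*}
F(\theta):=\Bigl(\lim_{h\to\infty}\frac{1}{h}\int_{\theta}^{\theta+h}\|\varphi(s)\|^p\,ds\Bigr)^{1/p},
\end{equation*}
a nonnegative measurable function. Approximating $F\cdot\chi_{[-l,l]\setminus\mathcal{J}}$ by simple functions supported off $A_1$, the set-inequalities promote to
\begin{equation*}
\alpha_1\int_{[-l,l]\setminus\mathcal{J}}F\,d\mu_2\le \int_{[-l,l]\setminus\mathcal{J}}F\,d\mu_1\le \beta_1\int_{[-l,l]\setminus\mathcal{J}}F\,d\mu_2,
\end{equation*}
while $\alpha_2\nu_2([-l,l]\setminus\mathcal{J})\le\nu_1([-l,l]\setminus\mathcal{J})\le\beta_2\nu_2([-l,l]\setminus\mathcal{J})$. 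Combining these two two-sided bounds yields
\begin{equation*}
\frac{\alpha_1}{\beta_2}\cdot\frac{1}{\nu_2([-l,l]\setminus\mathcal{J})}\int_{[-l,l]\setminus\mathcal{J}}F\,d\mu_2\le \frac{1}{\nu_1([-l,l]\setminus\mathcal{J})}\int_{[-l,l]\setminus\mathcal{J}}F\,d\mu_1\le\frac{\beta_1}{\alpha_2}\cdot\frac{1}{\nu_2([-l,l]\setminus\mathcal{J})}\int_{[-l,l]\setminus\mathcal{J}}F\,d\mu_2,
\end{equation*}
from which the limit condition of Theorem \ref{gx}$(ii)$ for $(\mu_1,\nu_1)$ holds iff the same condition for $(\mu_2,\nu_2)$ holds. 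Consequently $\mathcal{O}^p(\mathbb{R},\mathbb{X},\mu_1,\nu_1)=\mathcal{O}^p(\mathbb{R},\mathbb{X},\mu_2,\nu_2)$.

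The only delicate step is the passage from the set-valued inequalities $\mu_1(B)\le\beta_1\mu_2(B)$ (valid only for $B$ disjoint from $A_1$) to the integral inequalities above; this is a routine measure-theoretic argument (monotone convergence on simple-function approximations), but one must keep the chosen $\mathcal{J}$ large enough to contain both $A_1$ and $A_2$. Everything else is just bookkeeping with the constants $\alpha_i,\beta_i$.
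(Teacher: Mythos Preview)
Your argument is correct and rests on the same machinery as the paper's proof---both invoke Theorem~\ref{gx} after excising a bounded interval $\mathcal{J}$ containing the ``bad'' sets from the two equivalence relations. The difference is in which characterization you use: the paper applies part~(iii) (the level-set criterion) and simply sandwiches the ratio
\[
\frac{\mu_2(\{t\in [-l,l]\setminus \mathcal{J}: \|\varphi(t)\|> \epsilon\})}{\nu_2([-l,l]\setminus \mathcal{J})}
\]
between $\frac{\alpha_1}{\beta_2}$ and $\frac{\beta_1}{\alpha_2}$ times the corresponding $(\mu_1,\nu_1)$-ratio, which requires only the set inequalities as stated and no passage to integrals. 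Your route via part~(ii) is equally valid but costs you the extra monotone-convergence step to promote the set inequalities $\alpha_1\mu_2(B)\le\mu_1(B)\le\beta_1\mu_2(B)$ to integral inequalities for the function $F$. On the other hand, you are more careful than the paper in two respects: you explicitly take $\mathcal{J}=A_1\cup A_2$ (the paper leaves $\mathcal{J}$ unspecified and even writes the measure comparison as if it held for all $A\in\Sigma$), and you verify that $(B_1)$ transfers from $(\mu_1,\nu_1)$ to $(\mu_2,\nu_2)$ so that Theorem~\ref{gx} is genuinely available for both pairs.
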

\begin{proof}
Because $\mu_1\sim\mu_2, \nu_1 \sim \nu_2$, so, there are positive constants $\alpha_i, \beta_i, i=1,2$ such that
\begin{align*}
 \alpha_1\mu_1(A_1)\leq \mu_2(A_1)\leq \beta_1\mu_1(A_1)\, \text{ and }\, \alpha_2\nu_1(A_2)\leq \nu_2(A_2)\leq \beta_2\nu_1(A_2),\,\, A_1,A_2\in \Sigma.
\end{align*}

For each $\varphi\in \mathcal{O}^p(\mathbb{R},\mathbb{X},\mu_1,\nu_1)$ and any $\epsilon>0$, since $\sum$ is the Lebesgue $\sigma$-field, for $l$ sufficiently large, we obtain
\begin{align*}
 \frac{\alpha_1}{\beta_2}\frac{\mu_1(\{t\in [-l,l]\setminus \mathcal{J}: \|\varphi(t)\|> \epsilon\})}{\nu_1([-l,l]\setminus \mathcal{J})} \leq &\frac{\mu_2(\{t\in [-l,l]\setminus \mathcal{J}: \|\varphi(t)\|> \epsilon\})}{\nu_2([-l,l]\setminus \mathcal{J})}\\
  \leq &\frac{\beta_1}{\alpha_2}\frac{\mu_1(\{t\in [-l,l]\setminus \mathcal{J}: \|\varphi(t)\|> \epsilon\})}{\nu_1([-l,l]\setminus \mathcal{J})},
\end{align*}
which by Theorem \ref{gx} implies that  $\varphi\in \mathcal{O}^p(\mathbb{R},\mathbb{X},\mu_2,\nu_2)$. As a consequence, $\mathcal{O}^p(\mathbb{R},
\mathbb{X},\mu_1,\nu_1)\subseteq\mathcal{O}^p(\mathbb{R},\mathbb{X},
\mu_2,\nu_2)$. Similarly, we can show that $\mathcal{O}^p(\mathbb{R},\mathbb{X},\mu_1,\nu_1)\supseteq\mathcal{O}(\mathbb{R},\mathbb{X},
\mu_2,\nu_2)$. The proof is ended.
\end{proof}

\begin{theorem}
Let $\mu,\nu\in \mathcal{M}_P$ satisfy $(B_1)$. Then $\mathcal{O}^p(\mathbb{R},\mathbb{X}, \mu,\nu)$ is translation invariant.
\end{theorem}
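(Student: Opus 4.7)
Let $\varphi\in\mathcal{O}^p(\mathbb{R},\mathbb{X},\mu,\nu)$ and $\sigma\in\mathbb{R}$, and set $\varphi_\sigma(\cdot):=\varphi(\cdot+\sigma)$. The plan is to verify $\varphi_\sigma\in\mathcal{O}^p(\mathbb{R},\mathbb{X},\mu,\nu)$. Membership of $\varphi_\sigma$ in $W^p_{B}(\mathbb{R},\mathbb{X})$ is immediate from Lemma \ref{lem21}. Writing
$$F_\psi(\theta):=\Bigl(\lim_{h\to\infty}\tfrac{1}{h}\int_\theta^{\theta+h}\|\psi(s)\|^p\,ds\Bigr)^{1/p},$$
the substitution $s\mapsto s+\sigma$ inside the inner Weyl integral gives the pointwise identity $F_{\varphi_\sigma}(\theta)=F_\varphi(\theta+\sigma)$, so the required condition is
$$\lim_{l\to\infty}\frac{1}{\nu([-l,l])}\int_{-l}^{l}F_\varphi(\theta+\sigma)\,d\mu(\theta)=0.$$

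The key manipulation is the outer change of variable $\eta=\theta+\sigma$, which, by the very definition of $\mu_{-\sigma}$, recasts the integral as
$$\int_{-l}^{l}F_\varphi(\theta+\sigma)\,d\mu(\theta)=\int_{-l+\sigma}^{\,l+\sigma}F_\varphi(\eta)\,d\mu_{-\sigma}(\eta).$$
To return to the shape assumed on $\varphi$, I would invoke hypothesis $(B_2)$ on $\mu$ and $\nu$ (a standing hypothesis implicit in this setting, since $(B_1)$ alone does not encode translation compatibility) and apply Lemma \ref{lem210} to conclude $\mu\sim\mu_{-\sigma}$ and $\nu\sim\nu_{-\sigma}$. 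Hence there are constants $\beta_\mu,\beta_\nu>0$ and a bounded exceptional interval $\mathcal{A}$ such that $\mu_{-\sigma}(B)\le\beta_\mu\mu(B)$ and $\nu(B)\le\beta_\nu\nu_{-\sigma}(B)$ whenever $B\cap\mathcal{A}=\emptyset$. Splitting the $\mu_{-\sigma}$-integral into its parts on $\mathcal{A}$ and on $[-l+\sigma,l+\sigma]\setminus\mathcal{A}$, then comparing $[-l+\sigma,l+\sigma]$ with $[-l,l]$ (whose symmetric difference is two intervals of length $|\sigma|$), and dividing by $\nu([-l,l])$, reduces the required limit to $\lim_{l\to\infty}\tfrac{1}{\nu([-l,l])}\int_{-l}^{l}F_\varphi(\eta)\,d\mu(\eta)=0$, which is the hypothesis on $\varphi$.

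A conceptually cleaner alternative combines the identity above with Theorem \ref{n2}: since $\mu\sim\mu_{-\sigma}$ and $\nu\sim\nu_{-\sigma}$, that theorem gives $\mathcal{O}^p(\mathbb{R},\mathbb{X},\mu,\nu)=\mathcal{O}^p(\mathbb{R},\mathbb{X},\mu_{-\sigma},\nu_{-\sigma})$, and the change-of-variable identity then transfers membership of $\varphi$ to $\varphi_\sigma$ directly, modulo only the interval comparison $[-l+\sigma,l+\sigma]$ vs.\ $[-l,l]$. The main obstacle is the bookkeeping around both the exceptional interval $\mathcal{A}$ and the shift tails $[-l,-l+\sigma]\cup[l,l+\sigma]$: one must verify that these contributions are $o(\nu([-l,l]))$, for which $(B_1)$ is used to control the ratio $\mu([-l,l])/\nu([-l,l])$ and the essential boundedness of $F_\varphi$ (by $\|\varphi\|_{W^p}$) keeps the tail integrals under control.
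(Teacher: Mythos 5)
Your proposal is correct and follows essentially the same route as the paper's own proof: change variables in the inner Weyl integral to get $F_\varphi(\theta+\sigma)$, rewrite the outer integral against the translated measure $\mu_{-\sigma}$, and conclude via $\mu\sim\mu_{-\sigma}$, $\nu\sim\nu_{-\sigma}$ (Lemma \ref{lem210}) together with Theorem \ref{n2}. Your explicit remark that $(B_2)$ must be assumed is well taken --- the paper's proof also invokes Lemma \ref{lem210}, which requires $(B_2)$, even though the theorem statement lists only $(B_1)$ --- and your bookkeeping of the shifted integration interval $[-l+\sigma,l+\sigma]$ versus $[-l,l]$ is handled more carefully than in the paper, where that discrepancy is passed over silently.
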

\begin{proof}
For each $\varphi\in\mathcal{O}^p(\mathcal{R},\mathbb{X}, \mu,\nu)$ and $\omega\in \mathbb{R}$,   we infer that
\begin{align*}
  &\frac{1}{\nu([-l,l])}\int_{-l}^{l}\bigg(\lim\limits_{h\rightarrow +\infty}\frac{1}{h}\int_{\theta}^{\theta+h}\|\varphi(s+\omega)\|^pds \bigg)^{\frac{1}{p}}d\mu(\theta)\\
  = & \frac{1}{\nu([-l,l])}\int_{-l}^{l}\bigg(\lim\limits_{h\rightarrow +\infty}\frac{1}{h}\int_{\theta+\omega}^{\theta+\omega+h}\|\varphi(s)\|^pds \bigg)^{\frac{1}{p}}d\mu(\theta)\\
  =&\frac{1}{\nu([-l,l])}\int_{-l}^{l}\bigg(\lim\limits_{h\rightarrow +\infty}\frac{1}{h}\int_{\theta}^{\theta+h}\|\varphi(s)\|^pds \bigg)^{\frac{1}{p}}d\mu_{-\omega}(\theta)\\
  =&\frac{\nu_{-\omega}([-l,l)}{\nu([-l,l])}\frac{1}{\nu_{-\omega}([-l,l)}\int_{-l}^{l}\bigg(\lim\limits_{h\rightarrow +\infty}\frac{1}{h}\int_{\theta}^{\theta+h}\|\varphi(s)\|^pds \bigg)^{\frac{1}{p}}d\mu_{-\omega}(\theta).
\end{align*}
Due to the fact that $\mu\sim \mu_{-\omega}$ and $\nu\sim \nu_{-\omega}$ by Lemma \ref{lem210}, according to Theorem \ref{n2} and the
Lebesgue dominated convergence theorem, there holds:
\begin{align*}
 \lim\limits_{\gamma\rightarrow \infty} \frac{1}{\nu([-\gamma,\gamma])}\int_{-\gamma}^{\gamma} \|\varphi(t+\omega)\|^pd\nu(t)=0.
\end{align*}
The proof is ended.
\end{proof}

Similarly, one can easily show that
\begin{theorem}\label{lempy1}
Let $\mu,\nu\in \mathcal{M}_P$ satisfy $(B_1)$. Then $\mathcal{E}(\mathcal{R},\mathbb{X}, \mu,\nu)$ is translation invariant.
\end{theorem}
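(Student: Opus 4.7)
The plan is to mirror the proof of the preceding translation-invariance theorem for $\mathcal{O}^p(\mathbb{R},\mathbb{X},\mu,\nu)$, adapting the argument to the simpler vanishing-density condition that defines $\mathcal{E}(\mathbb{R},\mathbb{X},\mu,\nu)$. Fix $\varphi \in \mathcal{E}(\mathbb{R},\mathbb{X},\mu,\nu)$ and $\omega \in \mathbb{R}$; the goal is to show
$$\lim_{l\to\infty} \frac{1}{\nu([-l,l])} \int_{-l}^{l} \|\varphi(t+\omega)\|\, d\mu(t) = 0.$$

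First, I would apply the change of variable $s = t+\omega$ in the Lebesgue--Stieltjes integral to obtain
$$\int_{-l}^{l} \|\varphi(t+\omega)\|\, d\mu(t) = \int_{-l+\omega}^{l+\omega} \|\varphi(s)\|\, d\mu_{-\omega}(s),$$
where $\mu_{-\omega}$ is the translate of $\mu$ introduced before Lemma \ref{lem210}. Second, I would invoke Lemma \ref{lem210}, exactly as in the preceding theorem, to obtain $\mu \sim \mu_{-\omega}$ and $\nu \sim \nu_{-\omega}$, which provides positive constants $\alpha, \beta$ and a bounded exceptional interval $\mathcal{J}$ controlling the ratios of these measures off $\mathcal{J}$. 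Third, I would replace the shifted interval $[-l+\omega, l+\omega]$ by the symmetric one $[-l,l]$: the symmetric difference is contained in the bounded set $[-l-|\omega|,-l+|\omega|]\cup[l-|\omega|,l+|\omega|]$, over which $\|\varphi\|_\infty<\infty$ and the finite $\mu_{-\omega}$-measure of a bounded set combine to give a uniformly $O(1)$ contribution. Fourth, I would split $[-l,l]=([-l,l]\cap\mathcal{J})\cup([-l,l]\setminus\mathcal{J})$: the first piece contributes at most $\|\varphi\|_\infty\,\mu_{-\omega}(\mathcal{J})$, another $O(1)$ constant, while the equivalence $\mu\sim\mu_{-\omega}$ on the second piece yields
$$\int_{[-l,l]\setminus\mathcal{J}}\|\varphi(s)\|\,d\mu_{-\omega}(s) \leq \beta\int_{-l}^{l}\|\varphi(s)\|\,d\mu(s).$$
After dividing by $\nu([-l,l])$, all the $O(1)$ boundary terms vanish because $\nu(\mathbb{R})=\infty$ forces $\nu([-l,l])\to\infty$, and the main term is bounded by a constant multiple (absorbing also the bounded ratio between $\nu$ and $\nu_{-\omega}$) of the defining quantity of $\varphi\in\mathcal{E}(\mathbb{R},\mathbb{X},\mu,\nu)$, which tends to $0$ by hypothesis.

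The main obstacle is the bookkeeping around the interval shift and the exceptional bounded set $\mathcal{J}$ arising from Lemma \ref{lem210}: one must verify at each step that every boundary contribution is indeed absorbed into a term that becomes negligible when normalized by the diverging denominator $\nu([-l,l])$. Given $\|\varphi\|_\infty<\infty$ and $\mathcal{J}$ bounded, this is routine. I also note that, exactly as in the preceding theorem, the argument tacitly relies on property $(B_2)$ through Lemma \ref{lem210}, even though the theorem statement only lists $(B_1)$; this seems to be a mild gap in the hypothesis list which would need to be either patched by adding $(B_2)$ or justified by an independent argument.
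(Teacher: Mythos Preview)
Your proposal is correct and follows essentially the same route as the paper intends: the paper gives no proof for this theorem, merely saying ``Similarly, one can easily show that'' after the analogous $\mathcal{O}^p$ result, whose proof proceeds exactly by the change of variable to $\mu_{-\omega}$, the invocation of Lemma~\ref{lem210} to obtain $\mu\sim\mu_{-\omega}$ and $\nu\sim\nu_{-\omega}$, and then the equivalence-of-measures machinery. Your observation that this argument tacitly requires $(B_2)$ via Lemma~\ref{lem210}, despite only $(B_1)$ being listed, is accurate and applies equally to the paper's own $\mathcal{O}^p$ translation-invariance theorem; this is indeed a gap in the stated hypotheses rather than in your reasoning.
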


\begin{theorem}\label{lem01}
Let $\mu,\nu\in \mathcal{M}_P$ satisfy $(B_1)$ and $(B_2)$. Then $\mathcal{E}(\mathcal{R},\mathbb{X}, \mu,\nu)\subset \mathcal{O}^p(\mathcal{R},\mathbb{X}, \mu,\nu)$.
\end{theorem}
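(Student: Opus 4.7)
The plan is to invoke the equivalence in Theorem \ref{gx} so that I only need to control level sets of $\|\varphi\|$ pointwise, which is exactly the sort of thing Markov's inequality is made for. Start by fixing $\varphi\in \mathcal{E}(\mathbb{R},\mathbb{X},\mu,\nu)$ and setting $M:=\|\varphi\|_\infty<\infty$. Then $\frac{1}{h}\int_\theta^{\theta+h}\|\varphi(s)\|^p\,ds\le M^p$ for every $\theta$ and every $h>0$, so $\|\varphi\|_{W^p}\le M$ and in particular $\varphi\in W^p_B(\mathbb{R},\mathbb{X})$. This takes care of the ambient-space requirement in the definition of $\mathcal{O}^p$.

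Next I take $\mathcal{J}=\emptyset$ in Theorem \ref{gx} and verify criterion $(iii)$. By Markov's inequality, for any $\epsilon>0$ and $l>0$,
\begin{align*}
\epsilon\,\mu\bigl(\{t\in[-l,l]:\|\varphi(t)\|>\epsilon\}\bigr)\le \int_{\{t\in[-l,l]:\|\varphi(t)\|>\epsilon\}}\|\varphi(t)\|\,d\mu(t)\le \int_{-l}^{l}\|\varphi(t)\|\,d\mu(t).
\end{align*}
Dividing by $\nu([-l,l])$ and using the defining property of $\mathcal{E}(\mathbb{R},\mathbb{X},\mu,\nu)$,
\begin{align*}
\frac{\mu(\{t\in[-l,l]:\|\varphi(t)\|>\epsilon\})}{\nu([-l,l])}\le \frac{1}{\epsilon}\cdot\frac{1}{\nu([-l,l])}\int_{-l}^{l}\|\varphi(t)\|\,d\mu(t)\longrightarrow 0\qquad(l\to\infty).
\end{align*}
Since $(B_1)$ is in force, Theorem \ref{gx} applies and the implication $(iii)\Rightarrow(i)$ yields $\varphi\in\mathcal{O}^p(\mathbb{R},\mathbb{X},\mu,\nu)$, completing the proof.

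There is essentially no hard step here: the only tension is that the target space $\mathcal{O}^p$ is defined via a Weyl-type $L^p$ average over sliding windows, while the source space $\mathcal{E}$ is defined through a plain $L^1$-type integral against $\mu$. Theorem \ref{gx} absorbs all of this difficulty by reducing $\mathcal{O}^p$-membership to a distributional statement about the level sets of $\|\varphi\|$ themselves, and the essential boundedness from $\mathcal{L}^\infty$ is exactly what is needed to translate a mean-type smallness condition into a level-set smallness condition via Chebyshev/Markov. I note that hypothesis $(B_2)$ is not actually called upon in this argument; $(B_1)$ suffices for the invocation of Theorem \ref{gx} and for ensuring the normalization $\nu([-l,l])$ behaves well. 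It is kept in the statement only because the full framework of Weyl double-measure pseudo almost automorphic functions presupposes both $(B_1)$ and $(B_2)$.
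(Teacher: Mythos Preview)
Your proof is correct, and it takes a genuinely different route from the paper's. The paper argues directly on the defining integral of $\mathcal{O}^p$: it applies H\"older's inequality in the outer $\mu$-integral, peels off $\|\varphi\|_\infty^{p-1}$ from the inner $L^p$ average to reduce it to an $L^1$ average, and then uses Fubini's theorem to swap the $h$-average with the $\mu$-integral, arriving at an expression of the form $\frac{1}{\nu([-l,l])}\int_{-l}^{l}\|\varphi(s+\theta)\|\,d\mu(\theta)$. This is a \emph{translate} of the $\mathcal{E}$-mean, so the paper invokes the translation invariance of $\mathcal{E}(\mathbb{R},\mathbb{X},\mu,\nu)$ (Theorem \ref{lempy1}) together with dominated convergence to conclude. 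It is precisely this appeal to translation invariance that brings $(B_2)$ into play. Your approach bypasses all of that by reducing the question, via Theorem \ref{gx}\,$(iii)$, to a Markov/Chebyshev level-set estimate, which needs only $(B_1)$ and the $\mathcal{L}^\infty$ bound built into the definition of $\mathcal{E}$. So your argument is shorter, avoids both Fubini and the translation-invariance machinery, and confirms your observation that $(B_2)$ is superfluous for this particular inclusion. The paper's route, on the other hand, is more self-contained in the sense that it does not lean on the equivalence theorem \ref{gx}.
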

\begin{proof}
Let $\varphi\in \mathcal{O}(\mathcal{R},\mathbb{X}, \mu,\nu)$, then by the H\"{o}lder inequality and the Fubini theorem, it can be infered that
\begin{align}\label{lb}
&\frac{1}{\nu([-l,l])}\int_{-l}^{l}\bigg(\lim\limits_{h\rightarrow +\infty}\frac{1}{h}\int_{\theta}^{\theta+h}\|\varphi(s)\|^pds \bigg)^{\frac{1}{p}}d\mu(\theta)\nonumber\\
\leq&\frac{1}{\nu([-l,l])}\bigg(\int_{-l}^{l}d\mu(\theta)\bigg)^{1-\frac{1}{p}}\bigg(\int_{-l}^{l}\lim\limits_{h\rightarrow +\infty}\frac{1}{h}\int_{\theta}^{\theta+h}\|\varphi(s)\|^pds d\mu(\theta)\bigg)^{\frac{1}{p}}\nonumber\\
=&\frac{1}{\nu([-l,l])} (\mu([-l,l])^{1-\frac{1}{p}}\bigg(\int_{-l}^{l}\lim\limits_{h\rightarrow +\infty}\frac{1}{h}\int_{\theta}^{\theta+h}\|\varphi(s)\|^{p-1}\|\varphi(s)\|ds d\mu(\theta)\bigg)^{\frac{1}{p}}\nonumber\\
\leq&\frac{1}{\nu([-l,l])} (\mu([-l,l])^{1-\frac{1}{p}}\|\varphi\|^{p-1}_\infty\bigg(\int_{-l}^{l}\lim\limits_{h\rightarrow +\infty}\frac{1}{h}\int_{\theta}^{\theta+h}\|\varphi(s)\|ds d\mu(\theta)\bigg)^{\frac{1}{p}}\nonumber\\
=&\frac{1}{\nu([-l,l])} (\mu([-l,l])^{1-\frac{1}{p}}\|\varphi\|^{p-1}_\infty\bigg(\int_{-l}^{l}\lim\limits_{h\rightarrow +\infty}\frac{1}{h}\int_{0}^{h}\|\varphi(s+\theta)\|ds d\mu(\theta)\bigg)^{\frac{1}{p}}\nonumber\\
=& \bigg(\frac{\mu([-l,l])}{\nu([-l,l])}\bigg)^{1-\frac{1}{p}}\|\varphi\|^{p-1}_\infty\bigg(\lim\limits_{h\rightarrow +\infty}\frac{1}{h}\int_{0}^{h}\frac{1}{\nu([-l,l])}\int_{-l}^{l}\|\varphi(s+\theta)\|d\mu(\theta)ds \bigg)^{\frac{1}{p}}.
\end{align}
Since $\mathcal{E}(\mathcal{R},\mathbb{X}, \mu,\nu)$ is translation invariant by Theorem \ref{lempy1}, we have
\begin{align*}
 \lim\limits_{l\rightarrow\infty} \frac{1}{\nu([-l,l])}\int_{-l}^{l}\|\varphi(s+\theta)\|d\mu(\theta)=0.
\end{align*}
Hence, from \eqref{lb} and the Lebesgue dominated convergence, it follows that
\begin{align*}
\lim\limits_{l\rightarrow\infty}\frac{1}{\nu([-l,l])}\int_{-l}^{l}\bigg(\lim\limits_{h\rightarrow +\infty}\frac{1}{h}\int_{\theta}^{\theta+h}\|\varphi(s)\|^pds \bigg)^{\frac{1}{p}}d\mu(\theta)=0,
\end{align*}
which indicates that $\varphi\in \mathcal{O}^p(\mathcal{R},\mathbb{X}, \mu,\nu)$. This finishes the proof.
\end{proof}

We are currently in a situation where we are able to put forward the concept of Weyl double-measure pseudo almost automorphic functions.
\begin{definition} \label{def1}
Let $\mu,\nu\in \mathcal{M}_P$. A  function $\phi\in W^p_{B}(\mathbb{R},\mathbb{X})$ is called to be $p$-th Weyl $(\mu,\nu)$-pseudo almost automorphic  if  it can be rewritten as
$$\phi = \phi_1 + \phi_2,$$
where $\phi_1\in W^p_{AA}(\mathbb{R},\mathbb{X})$ and $\phi_2\in \mathcal{O}^p(\mathbb{R},\mathbb{X},\mu,\nu)$. The collection of such functions will be denoted by $W^p_{PAA}(\mathbb{R}, \mathbb{X},\mu,\nu)$.
\end{definition}

From Definitions \ref{zs} and \ref{def1},  we have
$$W^p_{AA}(\mathbb{R},\mathbb{X})\subset W^p_{PAA}(\mathbb{R},\mathbb{X},\mu,\nu)\subset W^p_B(\mathbb{R},\mathbb{X}).$$

From Definition \ref{def1} and Theorem \ref{n2}, we have
\begin{theorem}\label{n3}
Let $\mu_i,\nu_i\in \mathcal{M}_P, i=1,2$ and $\mu_1\sim\mu_2, \nu_1 \sim \nu_2$. If $(B_1)$ holds, then  $W^p_{PAA}(\mathbb{R},\mathbb{X},\mu_1,\nu_1)=W^p_{PAA}(\mathbb{R},\mathbb{X},\mu_2,\nu_2)$.
\end{theorem}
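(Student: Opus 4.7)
The plan is to prove both inclusions $W^p_{PAA}(\mathbb{R},\mathbb{X},\mu_1,\nu_1)\subseteq W^p_{PAA}(\mathbb{R},\mathbb{X},\mu_2,\nu_2)$ and the reverse by a direct unpacking of Definition \ref{def1} and an appeal to Theorem \ref{n2}. The crux is that the almost automorphic component $\phi_1$ in the decomposition $\phi=\phi_1+\phi_2$ is defined in a way that does not depend on the measures at all, so only the ergodic component $\phi_2$ needs to be transported from one pair of measures to the other.

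First I would take an arbitrary $\phi \in W^p_{PAA}(\mathbb{R},\mathbb{X},\mu_1,\nu_1)$ and use Definition \ref{def1} to write $\phi=\phi_1+\phi_2$ with $\phi_1\in W^p_{AA}(\mathbb{R},\mathbb{X})$ and $\phi_2\in \mathcal{O}^p(\mathbb{R},\mathbb{X},\mu_1,\nu_1)$. Next I would invoke Theorem \ref{n2}, which under the standing assumptions $\mu_1\sim\mu_2$, $\nu_1\sim\nu_2$ and $(B_1)$ yields $\mathcal{O}^p(\mathbb{R},\mathbb{X},\mu_1,\nu_1)=\mathcal{O}^p(\mathbb{R},\mathbb{X},\mu_2,\nu_2)$, hence $\phi_2\in \mathcal{O}^p(\mathbb{R},\mathbb{X},\mu_2,\nu_2)$. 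Combining these gives a decomposition of $\phi$ witnessing $\phi\in W^p_{PAA}(\mathbb{R},\mathbb{X},\mu_2,\nu_2)$, establishing the first inclusion.

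For the reverse inclusion I would note that the hypotheses are symmetric in the indices $1$ and $2$: indeed, the equivalences $\mu_1\sim\mu_2$ and $\nu_1\sim\nu_2$ are symmetric relations, and assumption $(B_1)$ is invoked only through Theorem \ref{n2}, which itself establishes an equality of function spaces. Therefore the same argument with the roles of the indices swapped gives $W^p_{PAA}(\mathbb{R},\mathbb{X},\mu_2,\nu_2)\subseteq W^p_{PAA}(\mathbb{R},\mathbb{X},\mu_1,\nu_1)$, and the two inclusions together yield the claimed equality.

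There is essentially no obstacle in this argument since all the nontrivial work has been absorbed into Theorem \ref{n2}; the main thing to be careful about is simply to cite Definition \ref{def1} correctly and to note explicitly that the almost automorphic part $\phi_1$ requires no modification when passing between the two measure pairs.
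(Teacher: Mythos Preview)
Your proposal is correct and follows precisely the approach indicated in the paper, which simply states that Theorem~\ref{n3} follows ``From Definition~\ref{def1} and Theorem~\ref{n2}'' without writing out further details. You have faithfully unpacked that one-line justification: the $W^p_{AA}$ component is measure-independent and Theorem~\ref{n2} transports the $\mathcal{O}^p$ component, so there is nothing to add.
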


We give the definition of Weyl double-measure pseudo almost automorphic functions that depend on a parameter as follows.
\begin{definition}\label{def2}
Let $\mu,\nu\in \mathcal{M}_P$. A  function $\varphi \in W^p_{B}(\mathbb{R}\times \mathbb{X},\mathbb{Y})$ is called  $p$-th Weyl $(\mu,\nu)$-pseudo almost automorphic in $t\in\mathbb{R}$  uniformly in $x\in \mathbb{X}$,    if  it can be rewritten as
$$\varphi = \varphi_1 + \varphi_2,$$
where $\varphi_1\in W^p_{AA}(\mathbb{R}\times \mathbb{X}, \mathbb{Y}, \mu, \nu)$ and $\varphi_2\in \mathcal{O}^p(\mathbb{R}\times \mathbb{X},\mathbb{Y},\mu,\nu)$. The collection of such functions will be denoted by $W^p_{PAA}(\mathbb{R}\times \mathbb{X}, \mathbb{Y},\mu,\nu)$.
\end{definition}

\begin{example}\label{ex1a2}
Take the Radon-Nikodym derivatives of $\mu$ and $\nu$ are $\sin t+2$ and $e^{|t|}+3+2\sin^2 t$, respectively. Then, we have
\begin{align*}
 \limsup\limits_{l\rightarrow\infty}\frac{\mu([-l,l])}{\nu([-l,l])}=\limsup\limits_{l\rightarrow\infty}\frac{\int_{-l}^{l}(\sin t+2)\mathrm{d}t}{\int_{-l}^{l}(e^{|t|}+3+2\sin^2 t)\mathrm{d}t}<\infty
\end{align*}
and
\begin{align*}
  \mu_\omega(A)=\int_{\omega+A}(\sin t+2)\mathrm{d}t\leq 3 \mu (A)
\end{align*}
for all $\omega\in \mathbb{R}$ and $A\in \Sigma$. Hence,
  conditions $(B_1)$ and $(B_2)$ are fulfilled.

Consider function
$$ \varphi(t)=\psi(t)+\alpha(t)+\gamma(t),$$ where $\psi\in W_{AA}^p(\mathbb{R}, \mathbb{R}), \gamma(t)=\arctan t, \alpha=e^{-|t|}$. It is easy to see that $\|\alpha\|_{W^p}=0, \|\gamma\|_{W^p}=\frac{\pi}{2}$ and $\alpha\in \mathcal{E}(\mathbb{R},\mathbb{R},\mu,\nu)\subset \mathcal{O}^p(\mathbb{R},\mathbb{R},\mu,\nu)$.

Since
\begin{align*}
\lim\limits_{l\rightarrow +\infty}\frac{1}{\nu([-l,l])}\int_{-l}^{l}|\arctan t |  d\mu(t)
 \leq \frac{\pi}{2}\lim\limits_{l\rightarrow\infty}\frac{\int_{-l}^{l}(\sin t+2)\mathrm{d}t}{\int_{-l}^{l}(e^{|t|}+3+2\sin^2 t)\mathrm{d}t}=0,
\end{align*}
 $\gamma\in \mathcal{E}(\mathbb{R},\mathbb{R},\mu,\nu)\subset \mathcal{O}^p(\mathbb{R},\mathbb{R},\mu,\nu)$. Hence, $\varphi\in W^p{PAA}(\mathbb{R},\mathbb{R},\mu,\nu)$.

If we take $\varphi_1(t)=\psi(t),\varphi_2(t)=\alpha(t)+\gamma(t)$, then   $\varphi_1  \in W^p_{AA}(\mathbb{R},\mathbb{R})$ and $\varphi_2\in \mathcal{E}(\mathbb{R},\mathbb{R},\mu,\nu)$. Moreover, if we take $\varphi_1(t)=\psi(t)+ \alpha(t),\varphi_2(t)=\gamma(t)$, then   $\varphi_1  \in W^p_{AA}(\mathbb{R},\mathbb{R})$ and $\varphi_2\in \mathcal{E}(\mathbb{R},\mathbb{R},\mu,\nu)$. That is to say, the decomposition of $\varphi$ is not unique.

\end{example}

\begin{remark}
From Example \ref{ex1a2}, we see that $W^p_{AA}(\mathbb{R},\mathbb{X})\cap \mathcal{O}^p(\mathbb{R},\mathbb{X},\mu,\nu)\neq \emptyset$. Therefore, although  $W^p_{AA}(\mathbb{R},\mathbb{X})$ and $\mathcal{O}^p(\mathbb{R},\mathbb{X},\mu,\nu)$ are complete regarding the seminorm $\|\cdot\|_{W^p}$, we could not show that $W_{PAA}^p(\mathbb{R},\mathbb{X},\mu,\nu)$ is complete with respect to $\|\cdot\|_{W^p}$.
\end{remark}

\section{Weyl $(\mu,\nu)$-pseudo almost automorphic solution}
\setcounter{equation}{0}
\indent

In this section, we will respectively establish the existence and uniqueness of Weyl almost automorphic solutions and Weyl $(\mu,\nu)$-pseudo almost automorphic solutions for the following abstract semilinear differential equation
\begin{align}\label{e1}
  x'(t)=A(t)x(t)+f(t,x(t)), \,\, t\in \mathbb{R},
\end{align}
where for each $t \in \mathbb{R}$, $A(t): D(A(t))\subset \mathbb{X}\rightarrow \mathbb{X}$  is a closed and densely defined linear operator  satisfying the so-called Acquistapace
and Terreni conditions \cite{at1}:
\begin{itemize}
  \item [$(S_1)$] There are constants $\lambda_0\geq 0, \theta\in (\frac{\pi}{2},\pi)$ and $K_1\geq 0$ such that $\Sigma_\theta \cup\{0\}\subset \rho(A(t)-\lambda_0)$ and for all $x\in \Sigma_0\cup \{0\}, t\in \mathbb{R},$
  $$\|R(\lambda, A(t)-\lambda_0)\| \leq \frac{K_1}{1+|\lambda|}.$$
  \item  [$(S_2)$] There are constants $K_2\geq 0$ and $\alpha,\beta\in (0,1]$ with $\alpha+\beta>1$ such that for all $\lambda\in \Sigma_\theta$ and $t,s\in \mathbb{R}$,
  \begin{align*}
    \|(A(t)-\lambda_0)R(\lambda,A(t)-\lambda_0)[R(\lambda_0,A(t))-R(\lambda_0,A(s))\| \leq \frac{K_2|t-s|^\alpha}{|\lambda|^\beta},
  \end{align*}
\end{itemize}
where $R(\lambda,L)=(\lambda I-L)^{-1}$ for all $\lambda\in \rho(L),  \Sigma_\theta= \{\lambda\in \mathbb{F}/\{0\}: |\arg \lambda|\leq \theta\}$.

Under assumptions $(S_1)$ and $(S_2)$, in view of \cite{at2}, we see that $A(t)$  generates a unique evolution family $\{\mathcal{U}(t,s)\}_{t\geq s}$ satisfying $\mathcal{U}(t,s)x\subset D(A(t))$
for all $t\geq s$ with the following properties:
\begin{itemize}
  \item [$(i)$] $\mathcal{U}(t,\theta)\mathcal{U}(\theta,s)=\mathcal{U}(t,s)$ and $\mathcal{U}(s,s)=I$ for all $t\geq \theta\geq s$.
  \item  [$(ii)$] The mapping $(t,s)\mapsto \mathcal{U}(t,s)x$ is continuous for all $x\in \mathbb{X}$ and $t\geq s$.
  \item  [$(iii)$] $\mathcal{U}(\cdot,s)\in C^1((s,\infty), L(X)), \frac{\partial \mathcal{U}}{\partial t} (t,s)=A(t)\mathcal{U}(t,s)$ and
  $$
  \|A(t)^\jmath \mathcal{U}(t,s\|\leq \jmath(t-s)^{-\jmath},\quad \mathrm{for} \quad 0<t-s<1 \quad \mathrm{and} \quad \jmath=0,1.
  $$
\end{itemize}

 We make the following assumptions:
\begin{itemize}
\item [$(H_1)$]  There exist constant $K\geq1,\lambda>0$ such that $\|\mathcal{U}(t,s)\|\leq K e^{-\lambda (t-s)}, t\geq s$.

\item [$(H_2)$] The evolution family  $\{\mathcal{U}(t,s)\}_{t\geq s}$ is bi-almost automorphic.

\item [$(H_3)$] Function $f: \mathbb{R}\times \mathbb{X} \rightarrow \mathbb{X}$ satisfies that   there are positive constants $L_f$ and $M_f$ such that  for all $x,y\in \mathbb{X}$ and $t\in\mathbb{R}$,
\begin{align*}
\|f(t,x)-f(t,y)\|\leq L_f\|x-y\| \quad \text{and}\quad
\|f(t,x)\|\leq M_f(1+\|x\|).
\end{align*}

 \item [$(H_4)$] Function $f\in W^p_{AA}(\mathbb{R}\times \mathbb{X},\mathbb{X})$ satisfies $(H_3)$.

\item [$(H_5)$] Function $f\in W^p_{PAP}(\mathbb{R}\times \mathbb{X},\mathbb{X})$ can be decomposed as $f=f_1+f_2$, where $f_1\in W^p_{AP}(\mathbb{R}\times \mathbb{X},\mathbb{X})$ and $f_2\in \mathcal{O}^p(\mathbb{R}\times \mathbb{X},\mathbb{X})$, and   there are constants $L_f$ and $M_f$ such that  for all $x,y\in \mathbb{X}$ and $t\in\mathbb{R}$,
\begin{align*}
&\|f(t,x)-f(t,y)\|\leq L_f\|x-y\| \quad \text{and}\quad
\|f(t,x)\|\leq M_f(1+\|x\|),\\
&\|f_1(t,x)-f_1(t,y)\|\leq L_f\|x-y\| \quad \text{and}\quad
\|f_1(t,x)\|\leq M_f(1+\|x\|).
\end{align*}

\item [$(H_6)$]The constant $\Theta:= \frac{K L_f}{\lambda}<1$, where $K$ is mentioned in $(H_1)$.
\end{itemize}

\begin{definition}\label{def41}
A mild solution of equation \eqref{e1} is a  function $x\in \mathcal{L}^\infty( \mathbb{R},\mathbb{X})$ that satisfies
\begin{align*}
  x(t)=\mathcal{U}(t,s)x(s)+\int_s^t\mathcal{U}(t,\theta)f(\tau,x(\theta))d \theta,\,\, t\geq s,\, s\in \mathbb{R}.
\end{align*}
\end{definition}

If $(H_1)$ holds,  then we have
\begin{align*}
  x(t)=\int_{-\infty}^t\mathcal{U}(t,\theta)f(\theta,x(\theta))d \theta,\,\, t\geq s,\, s\in \mathbb{R}.
\end{align*}

\begin{lemma}\label{lemab}
Let $(H_1)$  and $(H_2)$ hold. Then  it holds
\begin{align*}
 \|\tilde{\mathcal{U}}(t,s)\|\leq K e^{-\lambda(t-s)},\,\, t\geq s.
\end{align*}
\end{lemma}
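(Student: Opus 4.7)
The plan is to use the bi-almost automorphy from $(H_2)$ to produce a subsequence along which $\mathcal{U}(t+\gamma_n,s+\gamma_n)$ converges to $\tilde{\mathcal{U}}(t,s)$, then pass to the limit in the exponential bound supplied by $(H_1)$, taking advantage of the crucial cancellation $(t+\gamma_n)-(s+\gamma_n)=t-s$.

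More precisely, fix $(t,s)\in\mathbb{R}^2$ with $t\ge s$ and pick an arbitrary sequence $\{\gamma'_n\}\subset\mathbb{R}$. By $(H_2)$ and the definition of bi-almost automorphy, there is a subsequence $\{\gamma_n\}\subset\{\gamma'_n\}$ and a limit operator $\tilde{\mathcal{U}}(t,s)$ such that
\[
\lim_{n\to\infty}\mathcal{U}(t+\gamma_n,s+\gamma_n)=\tilde{\mathcal{U}}(t,s).
\]
Since $t\ge s$ implies $t+\gamma_n\ge s+\gamma_n$, assumption $(H_1)$ applies to every term in the sequence and yields
\[
\|\mathcal{U}(t+\gamma_n,s+\gamma_n)\|\le K e^{-\lambda((t+\gamma_n)-(s+\gamma_n))}=K e^{-\lambda(t-s)},
\]
uniformly in $n$. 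Taking $n\to\infty$ and invoking the lower semicontinuity of the operator norm under the convergence supplied by $(H_2)$ (strong or uniform convergence suffices, as the norm bound passes to the limit in either case) gives the desired estimate $\|\tilde{\mathcal{U}}(t,s)\|\le K e^{-\lambda(t-s)}$.

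I do not anticipate any real obstacle here: the statement is essentially a translation-invariance observation, because the difference $t-s$ is preserved under the simultaneous shift $(t,s)\mapsto(t+\gamma_n,s+\gamma_n)$ appearing in the definition of bi-almost automorphy. The only point requiring a sentence of care is the passage to the limit in the norm, which is justified by the fact that $(H_1)$ provides a uniform-in-$n$ bound on $\|\mathcal{U}(t+\gamma_n,s+\gamma_n)\|$ that depends only on $t-s$.
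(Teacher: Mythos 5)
Your proposal is correct and follows essentially the same route as the paper: the paper simply writes out the limit passage explicitly via the triangle inequality $\|\tilde{\mathcal{U}}(t,s)\|\leq \|\mathcal{U}(t+\gamma_{n},s+\gamma_{n})-\tilde{\mathcal{U}}(t,s)\|+\|\mathcal{U}(t+\gamma_{n},s+\gamma_{n})\|\leq \|\mathcal{U}(t+\gamma_{n},s+\gamma_{n})-\tilde{\mathcal{U}}(t,s)\|+K e^{-\lambda(t-s)}$ and lets $n\to\infty$, which is exactly your observation that the bound from $(H_1)$ is invariant under the simultaneous shift and survives the limit supplied by $(H_2)$.
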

\begin{proof} Note the fact that for any sequence $\{\gamma_n\}\subset \mathbb{R}$, there holds
\begin{align*}
&\| \tilde{\mathcal{U}}(t,s)\|\leq \|\mathcal{U}(t+\gamma_{n},s+\gamma_{n})-\tilde{\mathcal{U}}(t,s)\|+\|\mathcal{U}(t+\gamma_{n},s+\gamma_{n})\|\\
\leq &\|\mathcal{U}(t+\gamma_{n},s+\gamma_{n})-\tilde{\mathcal{U}}(t,s)\|+K e^{-\lambda(t-s)}.
\end{align*}
The conclusion of the lemma follows immediately. This finishes the proof.
\end{proof}

\begin{lemma}\label{lem42}
Let $(H_4)$  hold. If  $x\in \mathbb{W}$, then  $f(\cdot,x(\cdot))\in \mathbb{W}$.
\end{lemma}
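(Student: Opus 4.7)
The plan is to reduce Lemma 4.2 to a direct application of the composition theorem for parameter-dependent Weyl almost automorphic functions (Theorem \ref{lem212}), after first verifying that $f(\cdot,x(\cdot))$ is essentially bounded.

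First, I would establish that $f(\cdot,x(\cdot))\in \mathcal{L}^\infty(\mathbb{R},\mathbb{X})$. Since $x\in \mathbb{W}$ we have $\|x\|_\infty<\infty$, so the linear growth bound in $(H_3)$ gives
\begin{equation*}
\|f(t,x(t))\|\leq M_f(1+\|x(t)\|)\leq M_f(1+\|x\|_\infty)\quad \text{for a.e. }t\in\mathbb{R}.
\end{equation*}
This is the easy half and requires only the hypothesis $(H_3)$ inherited through $(H_4)$.

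Next, I would verify that $f(\cdot,x(\cdot))\in W^p_{AA}(\mathbb{R},\mathbb{X})$ by invoking Theorem \ref{lem212}. The hypotheses of that theorem are: (a) $f\in W^p_{AA}(\mathbb{R}\times\mathbb{X},\mathbb{X})$ with a uniform Lipschitz constant in its second variable, and (b) $g\in W^p_{AA}(\mathbb{R},\mathbb{X})\cap\mathcal{L}^\infty(\mathbb{R},\mathbb{X})$. Assumption $(H_4)$ gives exactly (a), while the defining property of $\mathbb{W}$ gives exactly (b) upon taking $g=x$. Therefore Theorem \ref{lem212} directly yields $f(\cdot,x(\cdot))\in W^p_{AA}(\mathbb{R},\mathbb{X})$.

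Combining the two pieces, $f(\cdot,x(\cdot))$ lies in $W^p_{AA}(\mathbb{R},\mathbb{X})\cap \mathcal{L}^\infty(\mathbb{R},\mathbb{X})=\mathbb{W}$, which finishes the proof. There is essentially no genuine obstacle here: the lemma is a straightforward corollary, and the only point that requires any care is making sure that the range $\{x(t):t\in\mathbb{R}\}$ of $x$ sits inside some bounded subset $\mathbb{B}\subset\mathbb{X}$, which is precisely guaranteed by $x\in\mathcal{L}^\infty(\mathbb{R},\mathbb{X})$ so that the "uniformly in $x\in\mathbb{B}$" clause of Definition \ref{def5} can be applied when calling Theorem \ref{lem212}.
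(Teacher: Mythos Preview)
Your proposal is correct and matches the paper's own proof essentially line for line: first use the growth bound in $(H_3)$ to get $f(\cdot,x(\cdot))\in\mathcal{L}^\infty(\mathbb{R},\mathbb{X})$, then invoke Theorem~\ref{lem212} to obtain $f(\cdot,x(\cdot))\in W^p_{AA}(\mathbb{R},\mathbb{X})$, and conclude membership in $\mathbb{W}$. Your added remark about the bounded range of $x$ ensuring applicability of Definition~\ref{def5} is a helpful clarification that the paper leaves implicit.
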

\begin{proof}According to $(H_3)$, we have for all $t\in \mathbb{R}$,
\begin{align*}
\|f(t,x(t))\| \leq M_f(1+\|x(t)\|)\leq M_f(1+\|x\|_\mathbb{W}).
\end{align*}
 The above inequality implies $f(\cdot,x(\cdot))\in \mathcal{L}^\infty(\mathbb{R},\mathbb{H})$.
  By Theorem \ref{lem212}, we see that $f(\cdot,x(\cdot))\in W^p_{AA}(\mathbb{R},\mathbb{X})$. Hence, we conclude that $f(\cdot,x(\cdot)) \in \mathbb{W}$. The proof is complete.
\end{proof}

\begin{theorem}\label{th41}
Let conditions $(S_1)$, $(S_2)$, $(H_1)$, $(H_3)$ and $(H_6)$ be fulfilled.
Then equation  \eqref{e1} has a mild unique solution in $\mathcal{L}^\infty(\mathbb{R},\mathbb{X})$.
\end{theorem}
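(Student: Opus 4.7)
The plan is to apply the Banach contraction principle on the complete space $(\mathcal{L}^\infty(\mathbb{R},\mathbb{X}),\|\cdot\|_\infty)$ to the nonlinear integral operator
\begin{align*}
(Tx)(t) := \int_{-\infty}^t \mathcal{U}(t,\theta) f(\theta,x(\theta))\,d\theta,\qquad t\in\mathbb{R},
\end{align*}
whose fixed points coincide with the mild solutions of \eqref{e1} in the sense of Definition \ref{def41} (after letting $s\to-\infty$ and using $(H_1)$ to kill the boundary term).

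First I would check that $T$ maps $\mathcal{L}^\infty(\mathbb{R},\mathbb{X})$ into itself. For any $x\in\mathcal{L}^\infty(\mathbb{R},\mathbb{X})$, the growth bound in $(H_3)$ gives $\|f(\theta,x(\theta))\|\leq M_f(1+\|x\|_\infty)$, while $(H_1)$ provides $\|\mathcal{U}(t,\theta)\|\leq K e^{-\lambda(t-\theta)}$ for $t\geq\theta$. Combining these,
\begin{align*}
\|(Tx)(t)\| \leq \int_{-\infty}^t K e^{-\lambda(t-\theta)} M_f(1+\|x\|_\infty)\, d\theta = \frac{K M_f}{\lambda}\bigl(1+\|x\|_\infty\bigr),
\end{align*}
uniformly in $t$, so $Tx\in\mathcal{L}^\infty(\mathbb{R},\mathbb{X})$. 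Measurability of $t\mapsto(Tx)(t)$ follows from strong continuity of the evolution family in property $(ii)$ of $\mathcal{U}(t,s)$ together with the dominated convergence theorem.

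Next I would establish the contraction estimate. For $x,y\in\mathcal{L}^\infty(\mathbb{R},\mathbb{X})$, the Lipschitz part of $(H_3)$ yields
\begin{align*}
\|(Tx)(t)-(Ty)(t)\| &\leq \int_{-\infty}^t K e^{-\lambda(t-\theta)} L_f\,\|x(\theta)-y(\theta)\|\,d\theta \\
&\leq \frac{K L_f}{\lambda}\,\|x-y\|_\infty = \Theta\,\|x-y\|_\infty,
\end{align*}
and by $(H_6)$ we have $\Theta<1$. Since $(\mathcal{L}^\infty(\mathbb{R},\mathbb{X}),\|\cdot\|_\infty)$ is a Banach space, the Banach fixed point theorem produces a unique $x^*\in\mathcal{L}^\infty(\mathbb{R},\mathbb{X})$ with $Tx^*=x^*$, which is the desired mild solution.

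I do not expect any serious obstacle here: conditions $(S_1)$--$(S_2)$ are only used to guarantee the existence of the evolution family $\{\mathcal{U}(t,s)\}_{t\geq s}$ enjoying the listed properties, $(H_1)$ supplies the exponential dichotomy that makes the improper integral convergent and erases the initial condition, and $(H_3)$--$(H_6)$ yield self-mapping and contractivity. The only mildly delicate point is checking that $(Tx)(t)$ is a genuine Bochner integral for each $t$ (rather than just formally finite in norm), but this is a routine consequence of the strong continuity of $(t,s)\mapsto\mathcal{U}(t,s)$ together with measurability of $\theta\mapsto f(\theta,x(\theta))$ and the exponential majorant above.
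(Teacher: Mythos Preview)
Your proof is correct and follows essentially the same approach as the paper: define the integral operator $\mathcal{T}$ on $\mathcal{L}^\infty(\mathbb{R},\mathbb{X})$, use $(H_1)$ and the growth bound in $(H_3)$ to show it is a self-map, then use $(H_1)$ and the Lipschitz bound in $(H_3)$ together with $(H_6)$ to obtain the contraction estimate $\|\mathcal{T}x-\mathcal{T}y\|_\infty\leq\Theta\|x-y\|_\infty$, and conclude via the Banach fixed point theorem. Your additional remarks on measurability and Bochner integrability are not in the paper but are reasonable hygiene.
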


\begin{proof}
Consider an operator $\mathcal{T}:\mathcal{L}^\infty(\mathbb{R},\mathbb{X})\rightarrow \mathcal{L}^\infty(\mathbb{R},\mathbb{X})$ defined by
\begin{align}
(\mathcal{T} x)(t)=\int_{-\infty}^tT(t-s)f(s,x(s))\mathrm{d}s,
\end{align}
where $x\in \mathcal{L}^\infty(\mathbb{R},\mathbb{X})$.

Firstly, we will prove that the operator $\mathcal{T}$ is well defined.
 In fact, for every $x \in \mathcal{L}^\infty(\mathbb{R},\mathbb{X})$, by $(H_1)$ and $(H_3)$, one has
 \begin{align}\label{e32}
\|(\mathcal{T} x)(t)\|\leq&\bigg\|\int_{-\infty}^t\mathcal{U}(t,s)f(s,x(s))\mathrm{d}s\bigg\|\nonumber\\
\leq&K\int_{-\infty}^te^{-\lambda(t-s)}\|f(s,x(s))\|\mathrm{d}s\nonumber\\
\leq&K\frac{1}{\lambda} M_f (1+\|x\|_\infty).
\end{align}
Hence, $\mathcal{T} x\in \mathcal{L}^\infty(\mathbb{R},\mathbb{X})$.

Then, we will show that $\mathcal{T}$ is a contraction mapping. For every $x,y\in \mathcal{L}^\infty(\mathbb{R},\mathbb{X})$, according to $(H_1)$ and $(H_3)$, we can infer that
\begin{align*}
 \|(\mathcal{T} x)(t)-(\mathcal{T} y)(t)\|
\le&\bigg\|\int_{-\infty}^t\mathcal{U}(t,s)\big(f(s,x(s))-f(s,y(s))\big)\mathrm{d}s\bigg\|\\
\le&K L_f\int_{-\infty}^te^{-\lambda(t-s)}
\|x(s)-y(s)\| \mathrm{d}s\\
\le& K \frac{1}{\lambda} L_f\|x-y\|_{\infty},
\end{align*}
which yields that
\begin{align*}
\|\mathcal{T} x-\mathcal{T} y\|_{\mathbb{W}}\le&\Theta\|x-y\|_{\mathbb{W}}.
\end{align*}
By $(H_6)$, $\mathcal{T}$ is a contraction. Hence, $\mathcal{T}$ has a unique fixed  point $x \in \mathcal{L}^\infty(\mathbb{R},\mathbb{X})$. The proof is complete.
\end{proof}

\begin{lemma}\label{lem43} Assume that $(H_1)$ and $(H_2)$ hold. If $x\in \mathbb{W}$,  then   function $\Phi :\mathbb{R}\rightarrow \mathbb{X}$ defined by
\begin{equation}\label{4.6}
\Phi(t)=\int_{-\infty}^t\mathcal{U}(t,\theta)x(t)d \theta,\,\, t\geq s,\, s\in \mathbb{R}
\end{equation}
belongs to $\mathbb{W}$.
\end{lemma}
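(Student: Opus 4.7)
The plan is to verify the two conditions in the definition of $\mathbb{W}$: essential boundedness and $p$-th Weyl almost automorphy. Boundedness of $\Phi$ is immediate from $(H_1)$ and $x \in \mathcal{L}^\infty(\mathbb{R},\mathbb{X})$:
\[
\|\Phi(t)\| \le K \int_{-\infty}^{t} e^{-\lambda(t-\theta)}\|x(\theta)\|\, d\theta \le \frac{K}{\lambda}\|x\|_\infty ,
\]
so $\Phi \in \mathcal{L}^\infty(\mathbb{R},\mathbb{X})$.

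For the Weyl almost automorphy, given an arbitrary sequence $\{\gamma'_n\}\subset\mathbb{R}$, I would first use a diagonal extraction to pull out a single subsequence $\{\gamma_n\}$ serving both the Weyl almost automorphy of $x$ (furnishing the limit $\tilde{x}$ from Definition \ref{zs}) and the bi-almost automorphy of $\mathcal{U}$ (furnishing $\tilde{\mathcal{U}}$ via $(H_2)$), with both forward and reverse convergences simultaneously in force. Then I would define the candidate limit
\[
\tilde{\Phi}(t) = \int_{-\infty}^{t} \tilde{\mathcal{U}}(t,\theta)\tilde{x}(\theta)\, d\theta ,
\]
which is well defined because Lemma \ref{lemab} gives $\|\tilde{\mathcal{U}}(t,s)\| \le K e^{-\lambda(t-s)}$ and Proposition \ref{lem23} guarantees $\tilde{x}$ is Weyl bounded. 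After the substitution $\theta \mapsto s+\gamma_n$ in $\Phi(t+\gamma_n)$, the natural add-and-subtract decomposition produces $\Phi(\cdot+\gamma_n)-\tilde{\Phi}=A_n+B_n$ with
\[
A_n(t)=\int_{-\infty}^{t}\mathcal{U}(t+\gamma_n,s+\gamma_n)\bigl[x(s+\gamma_n)-\tilde{x}(s)\bigr]\,ds ,
\]
\[
B_n(t)=\int_{-\infty}^{t}\bigl[\mathcal{U}(t+\gamma_n,s+\gamma_n)-\tilde{\mathcal{U}}(t,s)\bigr]\tilde{x}(s)\,ds .
\]

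For $A_n$, the change of variable $u=t-s$ and Minkowski's integral inequality applied to the Stepanov averages (as in the convolution argument that converts the exponential kernel into a multiplier) yield
\[
\left(\tfrac{1}{h}\!\int_{\theta}^{\theta+h}\!\!\|A_n(t)\|^p dt\right)^{1/p}
\le K\int_{0}^{\infty} e^{-\lambda u}\left(\tfrac{1}{h}\!\int_{\theta-u}^{\theta+h-u}\!\!\|x(s+\gamma_n)-\tilde{x}(s)\|^p ds\right)^{1/p} du,
\]
so that taking the supremum in $\theta$ and then $h\to\infty$ gives $\|A_n\|_{W^p}\le \tfrac{K}{\lambda}\|x(\cdot+\gamma_n)-\tilde{x}\|_{W^p}\to 0$. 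The reverse convergence $\|\tilde{\Phi}(\cdot-\gamma_n)-\Phi(\cdot)\|_{W^p}\to 0$ is obtained by the symmetric argument.

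The main obstacle is the control of $B_n$: the bi-almost automorphic convergence of the evolution family is only pointwise in $(t,s)$, whereas the target $\|B_n\|_{W^p}\to 0$ demands a long-time-average statement. My strategy is to exploit the exponential dichotomy of $(H_1)$ to truncate the integral at time $t-T$, so that the tail is uniformly bounded by $\tfrac{2K\|\tilde{x}\|_\infty}{\lambda}e^{-\lambda T}$ (made arbitrarily small by the choice of $T$), while on the compact window $[t-T,t]$ the dominated convergence theorem, with dominating function $2K e^{-\lambda(t-s)}\|\tilde{x}(s)\|$, forces $B_n(t)\to 0$ pointwise; the uniform $\mathcal{L}^\infty$-bound on $B_n$ together with the inequality $\|\cdot\|_{W^p}\le\|\cdot\|_\infty$ then upgrades pointwise control to the Weyl seminorm on the truncated part. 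Combining the estimates for $A_n$ and $B_n$ yields $\|\Phi(\cdot+\gamma_n)-\tilde{\Phi}\|_{W^p}\to 0$, and the analogous argument gives the reverse convergence, so that $\Phi\in W^p_{AA}(\mathbb{R},\mathbb{X})\cap\mathcal{L}^\infty(\mathbb{R},\mathbb{X})=\mathbb{W}$.
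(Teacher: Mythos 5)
Your proposal follows essentially the same route as the paper: the same candidate limit $\tilde\Phi(t)=\int_{-\infty}^t\tilde{\mathcal U}(t,s)\tilde x(s)\,ds$, the same add-and-subtract split into two terms $A_n$ and $B_n$ (the paper's $D_1$ and $D_2$), and the same reliance on Lemma \ref{lemab} for the decay of $\tilde{\mathcal U}$. For the first term, your Minkowski-integral-inequality convolution estimate is a clean (and arguably tidier) substitute for the paper's H\"older--Fubini--dominated-convergence computation; both yield $\|A_n\|_{W^p}\le\frac{K}{\lambda}\|x(\cdot+\gamma_n)-\tilde x\|_{W^p}\to0$.

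The weak link is the last step of your treatment of $B_n$. After truncating at $t-T$ you correctly obtain a tail that is small uniformly in $t$ and $n$, and pointwise convergence of the window integral to $0$; but the claim that the uniform $\mathcal L^\infty$-bound together with $\|\cdot\|_{W^p}\le\|\cdot\|_\infty$ ``upgrades pointwise control to the Weyl seminorm'' does not follow: a uniformly bounded sequence converging pointwise to $0$ need not converge in $\|\cdot\|_\infty$, nor in $\|\cdot\|_{W^p}$ (take $g_n=\mathbf 1_{[n,\infty)}$, for which $\|g_n\|_{W^p}=1$ for every $n$). Since $(H_2)$ gives only pointwise convergence of $\mathcal U(t+\gamma_n,s+\gamma_n)$ in $(t,s)$, passing from pointwise smallness of $\int_{t-T}^{t}\|\mathcal U(t+\gamma_n,s+\gamma_n)-\tilde{\mathcal U}(t,s)\|\,ds$ to smallness of its long-run averages $\lim_{h}\sup_\theta\frac1h\int_\theta^{\theta+h}(\cdot)\,dt$ genuinely requires some uniformity (for instance bi-almost automorphy uniform on strips $\{(t,s):0\le t-s\le T\}$, or an equicontinuity/compactness argument). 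You should know that the paper's own proof does exactly the same thing --- it disposes of $D_2$ with a bare appeal to the dominated convergence theorem after \eqref{eq3} and \eqref{aa} --- so your argument is no less rigorous than the published one, and unlike the paper you at least name the obstacle explicitly; but as written neither text actually closes this gap. (A minor shared imprecision: both you and the paper use $\|\tilde x\|_\infty$, although the Weyl limit $\tilde x$ is a priori only $W^p$-bounded; one should fix a bounded representative or argue via Proposition \ref{lem23}.)
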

\begin{proof}

To start with, we show that $\Phi\in \mathcal{L}^\infty(\mathbb{R},\mathbb{X})$, in view of $(H_1)$,   we deduce that
\begin{align*}
 \|\Phi(t)\|
\leq&\int_{-\infty}^{t}\|\mathcal{U}(t,s)\| \|x(s)\| ds \\
\leq&\frac{K}{\lambda}\|x\|_{\mathbb{W}},
 \end{align*}
which implies that $\Phi\in \mathcal{L}^\infty(\mathbb{R},\mathbb{H})$.

Next, we will show that $\Phi x\in W^p_{AA}(\mathbb{R},\mathbb{X})$. By virtue of $x\in \mathbb{W}$ and condition $(H_2)$,  for every sequence $\{\gamma'_n\}\subset \mathbb{R}$, we can select a subsequence $\{\gamma_{n}\}\subset \{\gamma'_n\}$ such that
 \begin{align}
 \lim\limits_{n\rightarrow\infty}\|x(\cdot+\gamma_{n})-\tilde{x}(\cdot)\|_{W^p}=0, \label{eq1}\\
 \lim\limits_{n\rightarrow\infty}\|\tilde{x}(\cdot-\gamma_{n})-x(\cdot)\|_{W^p}=0, \label{eq2}
 \end{align}
 and that for each $t\in \mathbb{R}$,
 \begin{align}
  \lim\limits_{n\rightarrow\infty}\|\mathcal{U}(t+\gamma_{n},s+\gamma_{n})x-\tilde{\mathcal{U}}(t,s)\|=0, \label{eq3}\\
   \lim\limits_{n\rightarrow\infty}\|\tilde{\mathcal{U}}(t-\gamma_{n},s-\gamma_{n})x-\mathcal{U}(t,s)\|=0. \label{eq4}
 \end{align}

 Define a function $\tilde{\Phi}: \mathbb{R}\rightarrow \mathbb{X}$ by
\[
\tilde{\Phi}(t)=\int_{-\infty}^t\tilde{\mathcal{U}}(t,\theta)\tilde{x}(t)d \theta,\,\, t\geq s,\, s\in \mathbb{R}.
\]

Then, by the H\"{o}lder inequality, it can be infer that
\begin{align*}
 &\|\Phi(t+\gamma_{n})-\tilde{\Phi}(t)\|^p\\
 =&\bigg\|\int_{-\infty}^{t}\mathcal{U}(t+\gamma_{n},s+\gamma_{n})x(s+\gamma_{n})ds -\int_{-\infty}^{t}\tilde{\mathcal{U}}(t,s)\tilde{x}(s)ds\bigg\|^p\\
 \leq &2^{p-1}\bigg\|\int_{-\infty}^{t}\mathcal{U}(t+\gamma_{n},s+\gamma_{n})[x(s+\gamma_{n})-\tilde{x}(s)]ds\bigg\|^p\\
 &+2^{p-1}\bigg\|\int_{-\infty}^{t}[\mathcal{U}(t+\gamma_{n},s+\gamma_{n})-\tilde{\mathcal{U}}(t,s)]\tilde{x}(s)ds\bigg\|^p\\
 \leq &2^{p-1}K^p\bigg(\int_{-\infty}^{t}e^{-\lambda(t-s)}\|x(s+\gamma_{n})-\tilde{x}(s)\|ds\bigg)^p\\
 &+2^{p-1}\bigg(\int_{-\infty}^{t}\|\mathcal{U}(t+\gamma_{n},s+\gamma_{n})-\tilde{\mathcal{U}}(t,s)\|ds\bigg)^{\frac{p}{q}}\\
 &\times \int_{-\infty}^{t}\|\mathcal{U}(t+\gamma_{n},s+\gamma_{n})-\tilde{\mathcal{U}}(t,s)\|\|\tilde{x}(s))\|^pds \\
  \leq &2^{p-1}K^p\bigg(\frac{1}{\lambda}\bigg)^{\frac{p}{q}}\int_{-\infty}^{t}e^{-\lambda(t-s)}\|x(s+\gamma_{n})-\tilde{x}(s)\|^pds\\
 &+2^{p-1}\bigg(2K\frac{1}{\lambda}\bigg)^{\frac{p}{q}}\int_{-\infty}^{t}\|\mathcal{U}(t+\gamma_{n},s+\gamma_{n})-\tilde{\mathcal{U}}(t,s)\|\|\tilde{x}(s)\|^pds \\
 \leq &2^{p-1}K^p\bigg(\frac{1}{\lambda}\bigg)^{\frac{p}{q}}\int_{0}^{\infty}e^{-\lambda s}\|x(t-s+\gamma_{n})-\tilde{x}(t-s)\|^pds\\
 &+2^{p-1} \bigg(2K\frac{1}{\lambda}\bigg)^{\frac{p}{q}}\|\tilde{x}\|^p_\mathbb{W}\int_{-\infty}^{t}\|\mathcal{U}(t+\gamma_{n},s+\gamma_{n})-\tilde{\mathcal{U}}(t,s)\|ds \\
 :=&D_1(t)+D_2(t),
 \end{align*}
 where $\frac{1}{p}+\frac{1}{q}=1$.
 In the reasoning process of the above formula, we have used Lemma \ref{lemab} and the following fact:
\begin{align}\label{aa}
&\int_{-\infty}^{t}\|\mathcal{U}(t+\gamma_{n},s+\gamma_{n})-\tilde{\mathcal{U}}(t,s)\|ds\nonumber\\
\leq & \int_{-\infty}^{t}\|\mathcal{U}(t+\gamma_{n},s+\gamma_{n})\|ds + \int_{-\infty}^{t}\|\tilde{\mathcal{U}}(t,s)\|ds\nonumber\\
\leq & 2K\int_{-\infty}^{t}e^{-\lambda(t-s)} ds \nonumber\\
=& 2K\frac{1}{\lambda}.
\end{align}

By  Fubini's theorem, the Lebesgue's dominated convergence theorem and \eqref{eq1}, we infer that
\begin{align*}
 &\lim\limits_{n\rightarrow\infty}\lim\limits_{h\rightarrow \infty} \sup\limits_{\theta\in\mathbb{R}}\frac{1}{h}\int_{\theta}^{\theta+h}\|D_1(t)\|^p_{\mathbb{X}}dt\\
 \leq& 2^{p-1}K^p\bigg(\frac{1}{\lambda}\bigg)^{\frac{p}{q}}\int_{0}^{\infty}e^{-\lambda s}\lim\limits_{n\rightarrow\infty}\lim\limits_{h\rightarrow \infty} \sup\limits_{\theta\in\mathbb{R}}\frac{1}{h}\int_{\theta}^{\theta+h}\|x(t-s+\gamma_{n})-\tilde{x}(t-s)\|^p_{\mathbb{X}}dt ds\\
 =& 2^{p-1}K^p\bigg(\frac{1}{\lambda}\bigg)^{\frac{p}{q}}\int_{0}^{\infty}e^{-\lambda s}\lim\limits_{n\rightarrow\infty}\|x(\cdot+\gamma_{n})-\tilde{x}(\cdot)\|_{W^p}^p ds\\
 =&0.
 \end{align*}

 In view of \eqref{eq3} and \eqref{aa}, by the Lebesgue's dominated convergence theorem, we can get
 \begin{align*}
 \lim\limits_{n\rightarrow\infty}\lim\limits_{h\rightarrow \infty} \sup\limits_{\theta\in\mathbb{R}}\frac{1}{h}\int_{\theta}^{\theta+h}\| D_2(t)\|dt
 =0.
 \end{align*}

 Similarly, from \eqref{eq2} and \eqref{eq4}, one can get
 $$\lim\limits_{n\rightarrow\infty}\|\tilde{\Phi}(t-\gamma_{n})-\Phi(t)\|_{W^p}=0,$$
 which means that $\Phi\in \mathbb{W}$. The proof is complete.
\end{proof}

\begin{theorem}\label{th42} Suppose that $(S_1)$, $(S_2)$, $(H_1)$, $(H_2)$, $(H_4)$ and $(H_6)$ hold. Then equation \eqref{e1} has a unique $p$-th Weyl  almost automorphic mild solution in $\mathbb{W}$.
\end{theorem}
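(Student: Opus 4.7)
The plan is to apply the Banach contraction principle to the operator
\begin{equation*}
(\mathcal{T}x)(t) := \int_{-\infty}^{t} \mathcal{U}(t,s)\, f(s, x(s))\, \mathrm{d}s
\end{equation*}
viewed as a self-map of the Banach space $(\mathbb{W}, \|\cdot\|_{\mathbb{W}})$, whose completeness is guaranteed by Theorem \ref{lem41}. Any fixed point of $\mathcal{T}$ is automatically a mild solution to \eqref{e1} in the sense of Definition \ref{def41}, since $(H_1)$ forces the $\mathcal{U}(t,s)x(s)$ term to vanish as $s \to -\infty$.

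The first step is to verify that $\mathcal{T}(\mathbb{W}) \subset \mathbb{W}$. Given $x \in \mathbb{W}$, Lemma \ref{lem42} (which relies on $(H_4)$, and whose applicability uses Theorem \ref{lem212}) yields $f(\cdot, x(\cdot)) \in \mathbb{W}$. Feeding this into Lemma \ref{lem43} with hypotheses $(H_1)$ and $(H_2)$ then gives $\mathcal{T}x \in \mathbb{W}$. Boundedness in the supremum norm is simultaneously controlled by the estimate \eqref{e32} from Theorem \ref{th41}, so there is no loss in passing to the smaller space $\mathbb{W}$.

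The second step is the contraction estimate. For any $x, y \in \mathbb{W}$, exactly the calculation carried out in the proof of Theorem \ref{th41} gives
\begin{equation*}
\|(\mathcal{T}x)(t) - (\mathcal{T}y)(t)\| \leq K L_f \int_{-\infty}^{t} e^{-\lambda(t-s)} \|x(s) - y(s)\|\, \mathrm{d}s \leq \frac{K L_f}{\lambda} \|x - y\|_{\mathbb{W}},
\end{equation*}
so $\|\mathcal{T}x - \mathcal{T}y\|_{\mathbb{W}} \leq \Theta \|x - y\|_{\mathbb{W}}$ with $\Theta < 1$ by $(H_6)$. A direct appeal to the Banach fixed point theorem then produces a unique $x^* \in \mathbb{W}$ with $\mathcal{T}x^* = x^*$, which is the desired unique $p$-th Weyl almost automorphic mild solution.

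The real obstacle has already been absorbed into Lemma \ref{lem43}: the subtle point is preserving $W^p_{AA}$ under the convolution with the bi-almost automorphic evolution family. With that technical lemma in hand, the main work of this theorem reduces to assembling the pieces, verifying that $(H_4)$ indeed supplies the Lipschitz condition required by the composition theorem (Theorem \ref{lem212}) to feed Lemma \ref{lem43}, and confirming that the contraction constant $\Theta = KL_f/\lambda$ coming from $(H_1)$ and $(H_3)$ matches the one required by $(H_6)$.
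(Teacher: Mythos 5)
Your proposal is correct and follows essentially the same route as the paper: the same integral operator on $(\mathbb{W},\|\cdot\|_{\mathbb{W}})$, with $f(\cdot,x(\cdot))\in\mathbb{W}$ supplied by Lemma \ref{lem42}, invariance of $\mathbb{W}$ supplied by Lemma \ref{lem43}, and the contraction constant $\Theta=KL_f/\lambda<1$ from $(H_1)$, $(H_4)$ and $(H_6)$ closing the argument via the Banach fixed point theorem. You correctly identify that the substantive technical content lives in Lemma \ref{lem43}, exactly as in the paper's proof.
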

\begin{proof}Define an operator $\Psi:\mathbb{W}\rightarrow \mathbb{W}$ by
\begin{equation*}
(\Psi x)(t)=\int_{-\infty}^{t}\mathcal{U}(t,s)f(s,x(s))ds,\,\, x\in \mathbb{W}, \,\,t\in \mathbb{R}.
\end{equation*}
Since $(H_2)$ holds, in view of Lemma \ref{lem42}, we have $f(\cdot,x(\cdot))\in \mathbb{W}$ for $x\in \mathbb{W}$.

 Obviously, $\Psi$ is well-defined and $\Psi$  maps  $\mathbb{W}$ into $\mathbb{W}$ according to Lemma \ref{lem43}. We just have to show that $\Psi: \mathbb{W}\rightarrow \mathbb{W}$ is a contraction mapping. In fact, for any  $x,y\in \mathbb{W}$, by $(H_1)$ and $(H_4)$, we can deduce that
\begin{align*}
\|(\Psi x)(t)-(\Psi y)(t)\|=&\bigg\|\int_{-\infty}^{t}\mathcal{U}(t,s)(f(s,x(s))-f(s,y(s)))ds\bigg\|\\
\leq&\int_{-\infty}^{t}Ke^{-\lambda(t-s)}L_f\|x(s)-y(s)\|ds\\
\leq&\frac{KL_f}{\lambda}\|x-y\|_{\mathbb{W}}, \,\,t\in\mathbb{R},
\end{align*}
 which yields
$$\|\Psi x-\Psi y\|_{\mathbb{W}}\leq \Theta\|x-y\|_{\mathbb{W}}.$$
 Hence, in view of $(H_6)$, $\Psi$ is a contraction mapping from $\mathbb{Y}$ to $\mathbb{W}$. Therefore,   equation \eqref{e1} has a unique $p$-th Weyl almost automorphic mild solution. The proof is complete.
\end{proof}

\begin{theorem}\label{th43}
Let $\mu,\nu\in \mathcal{M}_P$ and  conditions $(B_1)$, $(B_2)$, $(S_1)$, $(S_2)$, $(H_1)$, $(H_2)$, $(H_5)$ and $(H_6)$ hold.
Then equation  \eqref{e1} has   a unique $p$-th Weyl $(\mu,\nu)$-pseudo almost automorphic mild solution.
\end{theorem}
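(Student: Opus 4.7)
The plan is to mimic the fixed-point construction of Theorem \ref{th42} in the larger working space
\[
\mathbb{W}_{PAA} := W^p_{PAA}(\mathbb{R},\mathbb{X},\mu,\nu) \cap \mathcal{L}^\infty(\mathbb{R},\mathbb{X}),
\]
with the sup-norm, using the same operator $(\Psi x)(t) = \int_{-\infty}^{t}\mathcal{U}(t,s)f(s,x(s))\,ds$ as before. Once I verify that $\Psi$ sends $\mathbb{W}_{PAA}$ into itself, the $\mathcal{L}^\infty$-boundedness estimate \eqref{e32} and the Lipschitz calculation from the proof of Theorem \ref{th42} immediately give that $\Psi$ is a contraction on $\mathbb{W}_{PAA}$ under $(H_6)$, and Banach's fixed-point theorem produces the unique Weyl $(\mu,\nu)$-pseudo almost automorphic mild solution of \eqref{e1}.

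The first half of the invariance argument is a composition step. Taking $x = x_1 + x_2 \in \mathbb{W}_{PAA}$ with $x_1 \in W^p_{AA}$ and $x_2 \in \mathcal{O}^p(\mathbb{R},\mathbb{X},\mu,\nu)$, and using the decomposition $f = f_1 + f_2$ from $(H_5)$, I split
\[
f(t,x(t)) = f_1(t,x_1(t)) + \bigl[f_1(t,x(t)) - f_1(t,x_1(t))\bigr] + f_2(t,x(t)).
\]
Theorem \ref{lem212} applied to $f_1$ together with $x_1$ puts the first term in $W^p_{AA}(\mathbb{R},\mathbb{X})$; the middle term is controlled pointwise in the Weyl average by $L_f \|x_2\|_{W^p}$ through the Lipschitz bound, hence inherits $\mathcal{O}^p$-membership from $x_2$; and $f_2(\cdot,x(\cdot)) \in \mathcal{O}^p$ by the parameter-dependent version of Definition \ref{def31} applied on the bounded range $\{x(t) : t \in \mathbb{R}\}$. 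This gives $f(\cdot,x(\cdot)) \in W^p_{PAA}(\mathbb{R},\mathbb{X},\mu,\nu)$, and boundedness follows from the growth bound in $(H_5)$, so $f(\cdot,x(\cdot)) \in \mathbb{W}_{PAA}$.

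The second half is the convolution step, and its $\mathcal{O}^p$-component is the main obstacle. For the $W^p_{AA}$ part, Lemma \ref{lem43} applied to the $W^p_{AA} \cap \mathcal{L}^\infty$ piece already delivers a function in $\mathbb{W}$. The genuinely new content is to show that if $g \in \mathcal{O}^p(\mathbb{R},\mathbb{X},\mu,\nu)\cap\mathcal{L}^\infty$, then $\int_{-\infty}^{t}\mathcal{U}(t,s)g(s)\,ds$ again lies in $\mathcal{O}^p(\mathbb{R},\mathbb{X},\mu,\nu)$; the difficulty is that the $\mathcal{O}^p$ defining functional interlaces the double-measure average over $\theta$ with a Weyl seminorm inside, and both limits must be made to commute with the infinite convolution tail. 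My plan is to substitute $s=t-r$, use $(H_1)$ together with Minkowski's integral inequality to obtain
\[
\Bigl(\tfrac{1}{h}\int_{\theta}^{\theta+h}\|(\Psi g)(t)\|^p\,dt\Bigr)^{1/p} \leq K\int_0^\infty e^{-\lambda r}\Bigl(\tfrac{1}{h}\int_{\theta}^{\theta+h}\|g(t-r)\|^p\,dt\Bigr)^{1/p} dr,
\]
push $h \to \infty$ under the $r$-integral by dominated convergence with dominator $Ke^{-\lambda r}\|g\|_\infty$, then integrate against $d\mu(\theta)/\nu([-l,l])$. Fubini's theorem combined with the translation invariance of $\mathcal{O}^p$ (guaranteed by Theorem \ref{n2} and Lemma \ref{lem210} under $(B_1)$--$(B_2)$) and a final application of dominated convergence drives the outer $l \to \infty$ limit to zero. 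With both invariance steps in hand, the contraction/uniqueness argument closes exactly as in Theorem \ref{th42}.
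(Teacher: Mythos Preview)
Your plan has two genuine gaps, both of which the paper deliberately sidesteps by taking a different route.

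First, you invoke Banach's fixed-point theorem on $\mathbb{W}_{PAA}=W^p_{PAA}(\mathbb{R},\mathbb{X},\mu,\nu)\cap\mathcal{L}^\infty$, but you never verify that this space is complete under $\|\cdot\|_\infty$. The Remark following Example~\ref{ex1a2} states explicitly that the authors could \emph{not} prove $W^p_{PAA}$ complete under $\|\cdot\|_{W^p}$, precisely because $W^p_{AA}\cap\mathcal{O}^p\neq\{0\}$ makes the decomposition non-unique; the same obstruction blocks closedness of $\mathbb{W}_{PAA}$ in $\mathcal{L}^\infty$. Without completeness, the contraction iterates stay in $\mathbb{W}_{PAA}$ but their $\mathcal{L}^\infty$-limit need not. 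Second, in your composition step you decompose $x=x_1+x_2$ with $x_1\in W^p_{AA}$, $x_2\in\mathcal{O}^p$, and then apply Theorem~\ref{lem212} to $f_1(\cdot,x_1(\cdot))$. But Theorem~\ref{lem212} requires the inner function to lie in $W^p_{AA}\cap\mathcal{L}^\infty$, and nothing in the definition of $W^p_{PAA}\cap\mathcal{L}^\infty$ forces $x_1$ individually to be essentially bounded; since the decomposition is non-unique and uncontrolled, you cannot simply choose $x_1\in\mathcal{L}^\infty$.

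The paper avoids both problems by \emph{not} running a fixed point in $\mathbb{W}_{PAA}$ at all. Instead it (i) applies Theorem~\ref{th41} in $\mathcal{L}^\infty$ to obtain the unique mild solution $x$ of \eqref{e1}; (ii) applies Theorem~\ref{th42} with $f$ replaced by $f_1$ to obtain a unique $x_1\in\mathbb{W}=W^p_{AA}\cap\mathcal{L}^\infty$ solving $x_1(t)=\int_{-\infty}^t\mathcal{U}(t,s)f_1(s,x_1(s))\,ds$; and (iii) sets $x_2:=x-x_1$ and verifies $x_2\in\mathcal{O}^p$ by a direct H\"older/Fubini estimate on the difference of the two integral representations, using the Lipschitz bound on $f$, the ergodic hypothesis on $f_2$, and translation invariance of $\mathcal{O}^p$. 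Here $x_1$ is bounded \emph{by construction}, and both fixed-point arguments take place in spaces already known to be Banach. Your convolution estimate in the last paragraph is close in spirit to step~(iii), but the overall architecture must change: produce $x$ and $x_1$ separately in complete spaces, then prove the difference lies in $\mathcal{O}^p$.
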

\begin{proof}
According to Theorem \ref{th41}, we have that \eqref{e1} has   a unique solution $x \in \mathcal{L}^\infty(\mathbb{R},\mathbb{X})$  satisfies the following equation:
\begin{align}
 x(t)=\int_{-\infty}^t\mathcal{U}(t,s)f(s,x(s))\mathrm{d}s.
\end{align}
Moreover, according to Theorem  \ref{th42}, we can obtain that \eqref{e1} has   a unique $p$-th Weyl almost automorphic solution $x_1\in  \mathbb{W}$ satisfies the following equation:
\begin{align}
 x_1(t)=\int_{-\infty}^t\mathcal{U}(t,s)f_1(s,x_1(s))\mathrm{d}s.
\end{align}

Set $x-x_1=x_2$, then $x=x_1+x_2$. Obviously, in order to finish the proof, it suffices to show that $x_2\in \mathcal{O}^p(\mathbb{R},\mathbb{X})$.
Since $x\in \mathcal{L}^\infty(\mathbb{R},\mathbb{X})$ and $x_1\in \mathbb{W}$, $\|x_2\|_\infty<\infty$. In addition, by $(H_5)$, we have
$$\|f_2(t,x_1(t))\|\leq \|f(t,x_1(t))\|+\|f_1(t,x_1(t))\|\leq2 K(1+\|x_1\|)<\infty.$$

Hence, in view of H\"{o}lder's inequality and Fubini's theorem, it can be inferred that
\begin{align*}
  &\lim\limits_{l\rightarrow +\infty}\frac{1}{\nu([-l,l])}\int_{-l}^{l}\bigg(\lim\limits_{h\rightarrow +\infty}\frac{1}{h}\int_{\theta}^{\theta+h} \|x_2(t)\|^pdt \bigg)^{\frac{1}{p}}d\mu(\theta)\\
  \leq&\lim\limits_{l\rightarrow +\infty}\frac{1}{\nu([-l,l])}\int_{-l}^{l}\bigg(2^{p-1}\lim\limits_{h\rightarrow +\infty}\frac{1}{h}\int_{\theta}^{\theta+h} \bigg\|\int_{-\infty}^t\mathcal{U}(t,s)[f(s,x(s))\\
  &-f(s,x_1(s))]\mathrm{d}s\bigg\|^pdt \bigg)^{\frac{1}{p}}d\mu(\theta)+\lim\limits_{l\rightarrow +\infty}\frac{1}{\nu([-l,l])}\int_{-l}^{l}\bigg(2^{p-1}\lim\limits_{h\rightarrow +\infty}\frac{1}{h}\\
  &\times\int_{\theta}^{\theta+h} \bigg\|\int_{-\infty}^t\mathcal{U}(t,s)[f(s,x_1(s))-f_1(s,x_1(s))]\mathrm{d}s\bigg\|^pdt \bigg)^{\frac{1}{p}}d\mu(\theta)\\
  \leq&2^{\frac{p-1}{p}}ML_f\bigg(\frac{1}{\delta}\bigg)^{\frac{1}{q}}\lim\limits_{l\rightarrow +\infty}\frac{1}{\nu([-l,l])}\int_{-l}^{l}\bigg(\lim\limits_{h\rightarrow +\infty}\frac{1}{h}\int_{\theta}^{\theta+h}\int_{0}^\infty e^{-\delta s}
\|x_2(t-s)\|^p\mathrm{d}sdt \bigg)^{\frac{1}{p}}d\mu(\theta)\\
  &+ 2^{\frac{p-1}{p}}K\bigg(\frac{1}{\delta}\bigg)^{\frac{1}{q}}\lim\limits_{l\rightarrow +\infty}\frac{1}{\nu([-l,l])}\int_{-l}^{l}\bigg(\lim\limits_{h\rightarrow +\infty}\frac{1}{h}\\
  &\times\int_{\theta}^{\theta+h}\int_{0}^\infty e^{-\delta s}\|f_2(t-s,x_1(t-s))\|^p\mathrm{d}sdt \bigg)^{\frac{1}{p}}d\mu(\theta)\\
  \leq&2^{\frac{p-1}{p}}ML_f\bigg(\frac{1}{\delta}\bigg)^{\frac{1}{q}}\lim\limits_{l\rightarrow +\infty}\frac{1}{\nu([-l,l])}\int_{-l}^{l}\bigg(\int_{0}^\infty e^{-\delta s}\lim\limits_{h\rightarrow +\infty}\frac{1}{h}\int_{\theta}^{\theta+h}
\|x_2(t-s)\|^pdt\mathrm{d}s \bigg)^{\frac{1}{p}}d\mu(\theta)\\
  &+ 2^{\frac{p-1}{p}}K\bigg(\frac{1}{\delta}\bigg)^{\frac{1}{q}}\lim\limits_{l\rightarrow +\infty}\frac{1}{\nu([-l,l])}\int_{-l}^{l}\bigg(\int_{0}^\infty e^{-\delta s}\lim\limits_{h\rightarrow +\infty}\frac{1}{h}\\
  &\times\int_{\theta}^{\theta+h}\|f_2(t-s,x_1(t-s))\|^pdt\mathrm{d}s \bigg)^{\frac{1}{p}}d\mu(\theta)\\
  \leq&2^{\frac{p-1}{p}}ML_f\bigg(\frac{1}{\delta}\bigg)^{\frac{1}{q}}\lim\limits_{l\rightarrow +\infty}\frac{1}{\nu([-l,l])}\bigg(\int_{-l}^{l} d\mu(\theta)\bigg)^{1-\frac{1}{p}}\\
  &\times \bigg(\int_{-l}^{l}\int_{0}^\infty e^{-\delta s}\lim\limits_{h\rightarrow +\infty}\frac{1}{h}\int_{\theta}^{\theta+h}
\|x_2(t-s)\|^pdt\mathrm{d}sd\mu(\theta)\bigg)^{\frac{1}{p}}\\
  &+ 2^{\frac{p-1}{p}}K\bigg(\frac{1}{\delta}\bigg)^{\frac{1}{q}}\lim\limits_{l\rightarrow +\infty}\frac{1}{\nu([-l,l])}\bigg(\int_{-l}^{l} d\mu(\theta)\bigg)^{1-\frac{1}{p}}\\
  &\times \bigg(\int_{-l}^{l}\int_{0}^\infty e^{-\delta s}\lim\limits_{h\rightarrow +\infty}\frac{1}{h}\int_{\theta}^{\theta+h}\|f_2(t-s,x_1(t-s))\|^pdt\mathrm{d}sd\mu(\theta)\bigg)^{\frac{1}{p}}\\
   \leq&2^{\frac{p-1}{p}}ML_f\bigg(\frac{1}{\delta}\bigg)^{\frac{1}{q}}\|x_2\|^{1-\frac{1}{p}}_\infty\lim\limits_{l\rightarrow +\infty}\frac{(\mu([-l,l]) )^{1-\frac{1}{p}} }{\nu([-l,l])}\\
   &\times\bigg(\int_{-l}^{l}\int_{0}^\infty e^{-\delta s}\lim\limits_{h\rightarrow +\infty}\frac{1}{h}\int_{\theta}^{\theta+h}
\|x_2(t-s)\|dt\mathrm{d}sd\mu(\theta)\bigg)^{\frac{1}{p}}\\
  &+ 2^{\frac{p-1}{p}}K\bigg(\frac{1}{\delta}\bigg)^{\frac{1}{q}}\|f_2\|^{1-\frac{1}{p}}_\infty \lim\limits_{l\rightarrow +\infty}\frac{(\mu([-l,l]))^{1-\frac{1}{p}}}{\nu([-l,l])}\\
  &\times \bigg(\int_{-l}^{l}\int_{0}^\infty e^{-\delta s}\lim\limits_{h\rightarrow +\infty}\frac{1}{h}\int_{\theta}^{\theta+h}\|f_2(t-s,x_1(t-s))\|dt\mathrm{d}sd\mu(\theta)\bigg)^{\frac{1}{p}}\\
  \leq&2^{\frac{p-1}{p}}ML_f\bigg(\frac{1}{\delta}\bigg)^{\frac{1}{q}} \|x_2\|^{1-\frac{1}{p}}_\infty\lim\limits_{l\rightarrow +\infty}\frac{(\mu([-l,l]) )^{1-\frac{1}{p}}}{\nu([-l,l])}\\
   &\times\bigg(\int_{-l}^{l}\int_{0}^\infty e^{-\delta s}\bigg(\lim\limits_{h\rightarrow +\infty}\frac{1}{h}\int_{\theta}^{\theta+h}
\|x_2(t-s)\|^pdt\bigg)^{\frac{1}{p}}\mathrm{d}sd\mu(\theta)\bigg)^{\frac{1}{p}}\\
  &+ 2^{\frac{p-1}{p}}K\bigg(\frac{1}{\delta}\bigg)^{\frac{1}{q}}\|f_2\|^{1-\frac{1}{p}}_\infty \lim\limits_{l\rightarrow +\infty}\frac{(\mu([-l,l]))^{1-\frac{1}{p}}}{\nu([-l,l])}\\
  &\times \bigg(\int_{-l}^{l}\int_{0}^\infty e^{-\delta s}\bigg(\lim\limits_{h\rightarrow +\infty}\frac{1}{h}\int_{\theta}^{\theta+h}\|f_2(t-s,x_1(t-s))\|^pdt\bigg)^{\frac{1}{p}}\mathrm{d}s d\mu(\theta)\bigg)^{\frac{1}{p}}\\
 \leq&2^{\frac{p-1}{p}}ML_f\bigg(\frac{1}{\delta}\bigg)^{\frac{1}{q}} \|x_2\|^{1-\frac{1}{p}}_\infty\lim\limits_{l\rightarrow +\infty}\frac{(\mu([-l,l]) )^{1-\frac{1}{p}}}{\nu([-l,l])}\\
   &\times\bigg(\int_{0}^\infty e^{-\delta s}\int_{-l}^{l}\bigg(\lim\limits_{h\rightarrow +\infty}\frac{1}{h}\int_{\theta}^{\theta+h}
\|x_2(t-s)\|^pdt\bigg)^{\frac{1}{p}}d\mu(\theta)\mathrm{d}s\bigg)^{\frac{1}{p}}\\
  &+ 2^{\frac{p-1}{p}}K\bigg(\frac{1}{\delta}\bigg)^{\frac{1}{q}}\|f_2\|^{1-\frac{1}{p}}_\infty \lim\limits_{l\rightarrow +\infty}\frac{(\mu([-l,l]))^{1-\frac{1}{p}}}{\nu([-l,l])}\\
  &\times \bigg(\int_{0}^\infty e^{-\delta s}\int_{-l}^{l}\bigg(\lim\limits_{h\rightarrow +\infty}\frac{1}{h}\int_{\theta}^{\theta+h}\|f_2(t-s,x_1(t-s))\|^pdt\bigg)^{\frac{1}{p}} d\mu(\theta)\mathrm{d}s\bigg)^{\frac{1}{p}}\\
  \leq&2^{\frac{p-1}{p}}ML_f\bigg(\frac{1}{\delta}\bigg)^{\frac{1}{q}} \|x_2\|^{1-\frac{1}{p}}_\infty\lim\limits_{l\rightarrow +\infty}\bigg(\frac{\mu([-l,l])}{\nu([-l,l])}\bigg)^{1-\frac{1}{p}}\\
   &\times\bigg(\int_{0}^\infty e^{-\delta s}\lim\limits_{l\rightarrow +\infty}\frac{1}{\nu([-l,l])}\int_{-l}^{l}\bigg(\lim\limits_{h\rightarrow +\infty}\frac{1}{h}\int_{\theta}^{\theta+h}
\|x_2(t-s)\|^pdt\bigg)^{\frac{1}{p}}d\mu(\theta)\mathrm{d}s\bigg)^{\frac{1}{p}}\\
  &+ 2^{\frac{p-1}{p}}K\bigg(\frac{1}{\delta}\bigg)^{\frac{1}{q}}\|f_2\|^{1-\frac{1}{p}}_\infty \lim\limits_{l\rightarrow +\infty}\bigg(\frac{\mu([-l,l])}{\nu([-l,l])}\bigg)^{1-\frac{1}{p}}\\
  &\times \bigg(\int_{0}^\infty e^{-\delta s}\lim\limits_{l\rightarrow +\infty}\frac{1}{\nu([-l,l])}\int_{-l}^{l}\bigg(\lim\limits_{h\rightarrow +\infty}\frac{1}{h}\int_{\theta}^{\theta+h}\|f_2(t-s,x_1(t-s))\|^pdt\bigg)^{\frac{1}{p}} d\mu(\theta)\mathrm{d}s\bigg)^{\frac{1}{p}}\\
  =&0.
\end{align*}
Therefore, $x_2\in \mathcal{O}^p(\mathbb{R},\mathbb{X})$. The proof is complete.
\end{proof}

\section{Global exponential stability}
\setcounter{equation}{0}
\indent

In this section, we will demonstrate that the Weyl almost automorphic mild solutions and Weyl $(\mu,\nu)$-pseudo almost automorphic mild solutions of equation \eqref{e1} exhibit global exponential stability.

\begin{definition}
Let $x^*(t)$ be the  solution
of \eqref{e1}  with the initial value $x^*(t_0)$ and $x(t)$ be an arbitrary solution with the initial value $x(t_0)$. If there are positive numbers $N$ and $\kappa$ such
that
\[
\|x(t)-x^*(t)\|\leq N\|x(t_0)-x^*(t_0)\|e^{-\kappa(t-t_0)},\,\, \text{for all}\,\, t>t_0,
\]
then $x^*(t)$ is called to be globally exponentially stable.
\end{definition}

\begin{theorem}\label{th51}
 Assume that  conditions $(S_1)$, $(S_2)$, $(H_1)$,  $(H_2)$, $(H_4)$ and $(H_6)$ hold. Then, the unique $p$-th Weyl almost automorphic mild solution
of \eqref{e1} is globally exponentially stable.
\end{theorem}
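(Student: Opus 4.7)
The plan is to derive the estimate by direct subtraction of the mild-solution integral equations and then apply a Gronwall-type inequality. Let $x^*$ be the unique $p$-th Weyl almost automorphic mild solution obtained in Theorem \ref{th42} and let $x$ be any other mild solution of \eqref{e1} with initial condition $x(t_0)$. Using the variation-of-constants form of Definition \ref{def41} starting at $t_0$, subtract the two representations to obtain
\begin{align*}
x(t)-x^*(t)=\mathcal{U}(t,t_0)\bigl(x(t_0)-x^*(t_0)\bigr)+\int_{t_0}^{t}\mathcal{U}(t,s)\bigl(f(s,x(s))-f(s,x^*(s))\bigr)\,ds.
\end{align*}

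Next, I would invoke the exponential bound $(H_1)$ together with the global Lipschitz property of $f$ from $(H_3)$ (contained in $(H_4)$) to get the scalar inequality
\begin{align*}
\|x(t)-x^*(t)\|\leq K e^{-\lambda(t-t_0)}\|x(t_0)-x^*(t_0)\|+K L_f\int_{t_0}^{t}e^{-\lambda(t-s)}\|x(s)-x^*(s)\|\,ds.
\end{align*}
Multiplying both sides by $e^{\lambda(t-t_0)}$ and setting $u(t):=e^{\lambda(t-t_0)}\|x(t)-x^*(t)\|$ puts the inequality in Gronwall-ready form
\begin{align*}
u(t)\leq K\|x(t_0)-x^*(t_0)\|+K L_f\int_{t_0}^{t}u(s)\,ds,
\end{align*}
so the classical Gronwall lemma yields $u(t)\leq K\|x(t_0)-x^*(t_0)\|e^{K L_f(t-t_0)}$, i.e.
\begin{align*}
\|x(t)-x^*(t)\|\leq K\|x(t_0)-x^*(t_0)\|e^{-(\lambda-K L_f)(t-t_0)}.
\end{align*}

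Finally, I would use $(H_6)$, which states $\Theta=KL_f/\lambda<1$ and hence $\kappa:=\lambda-K L_f>0$, to conclude that with $N:=K$ the inequality in the definition of global exponential stability is satisfied for every $t>t_0$. There is no substantive obstacle here: the only delicate point is to keep straight that the exponential dichotomy in $(H_1)$ is what controls the homogeneous part while $(H_6)$ is precisely the threshold that converts the Gronwall exponent $KL_f$ into a strictly negative overall rate $-\kappa$; both hypotheses are already available. The argument makes no use of $(H_2)$ or of Weyl almost automorphy beyond the existence of $x^*$ guaranteed by Theorem \ref{th42}, so the same reasoning will transfer verbatim to the Weyl $(\mu,\nu)$-pseudo almost automorphic setting of Theorem \ref{th43}.
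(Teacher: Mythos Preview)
Your proof is correct and is in fact cleaner than the paper's. Both arguments start identically, subtracting the two mild-solution representations and invoking $(H_1)$ and the Lipschitz condition to reach the scalar inequality
\[
\|x(t)-x^*(t)\|\leq K e^{-\lambda(t-t_0)}\|x(t_0)-x^*(t_0)\|+K L_f\int_{t_0}^{t}e^{-\lambda(t-s)}\|x(s)-x^*(s)\|\,ds.
\]
From here you multiply through by $e^{\lambda(t-t_0)}$ and apply Gronwall directly, obtaining the explicit rate $\kappa=\lambda-KL_f>0$ and constant $N=K$. The paper instead proceeds by a first-violation-time contradiction argument: it postulates an inequality of the form $\|y(t)\|<N(\|y(t_0)\|+\epsilon)e^{-\eta(t-t_0)}$, assumes it fails for the first time at some $t_1$, and plugs back into the integral inequality to derive a strict inequality at $t_1$, a contradiction. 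This costs the paper some bookkeeping (choosing an auxiliary $\eta\in(0,\lambda-KL_f)$, an amplification constant $N=\lambda/L_f>K$, and an $\epsilon$-perturbation to make the initial step strict), and the resulting decay rate is any $\eta<\lambda-KL_f$ rather than the sharp $\lambda-KL_f$ your Gronwall route delivers immediately. Your observation that $(H_2)$ plays no role beyond the existence of $x^*$, and that the argument ports verbatim to Theorem~\ref{thm3}, is exactly right; the paper indeed states Theorem~\ref{thm3} without a separate proof for the same reason.
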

\begin{proof}According to Theorem \ref{th42},  \eqref{e1} has a unique  $p$-th Weyl almost automorphic solution $x^*$.
Let $y=x-x^*$, then by Definition \ref{def41}, we have
\begin{align}\label{st}
y(t)&=\mathcal{U}(t,t_0)y(t_0)+\int_{t_0}^t\mathcal{U}(t,s)[f(s,x(s))-f(s,x^*(s))]\mathrm{d}s,\quad t \ge t_0,\, t_0\in\mathbb{R}.
\end{align}

Consider a linear function defined as
\begin{align*}
S(\theta)=\lambda-\theta-\Theta L_f.
\end{align*}

By $(H_6)$, $S(0)>0$. Obviously,   $S(\theta)=0$ has a unique positive root $\theta^*=\lambda-\Theta L_f$. Hence, we have $S(\theta)>$ for $\theta\in (0,\theta^*)$.
Take a positive number $\eta\in  (0,\min\{\lambda,\theta^*\})$. Then,  $S(\eta)>0$ and hence,
\begin{align*}
 \frac{\Theta L_f}{\lambda-\eta}<1.
\end{align*}
Set $N=\frac{\lambda}{L_f}$. Since $(H_4)$ and $K\geq 1$, we have $N>1$. Thus,
\begin{align*}
  \frac{1}{N}-\frac{L_f}{\lambda-\eta}<0.
\end{align*}

Now, we will prove that for arbitrary $\epsilon>0$,
\begin{align}\label{st1}
  \|y(t)\|< N(\|y(t_0)\|+\epsilon)e^{-\eta(t-t_0)},\,\, \text{for all}\,\, t\geq t_0.
\end{align}

Obviously, \eqref{st1} holds for $t=t_0$. If it does not hold for all $t>t_0$, then there must be a point $t_1>t_0$ such that
\begin{align}\label{st2}
  \|y(t)\|< & N(\|y(t_0)\|+\epsilon)e^{-\eta(t-t_0)},\,\, t_0\leq t<t_1,\nonumber\\
  \|y(t_1)\| = &N(\|y(t_0)\|+\epsilon)e^{-\eta(t_1-t_0)}.
\end{align}
From \eqref{st}, it follows that
\begin{align*}
\|y(t_1)\|=&\bigg\|\mathcal{U}(t_1,t_0)y(t_0)+\int_{t_0}^{t_1}\mathcal{U}(t_1,s)[f(s,x(s))-f(s,x^*(s))]\mathrm{d}s\bigg\|\nonumber\\
\leq& \|\mathcal{U}(t_1,t_0)y(t_0)\|^p+\bigg\|\int_{t_0}^{t_1}\mathcal{U}(t_1,s)[f(s,x(s))-f(s,x^*(s))]\mathrm{d}s\bigg\| \nonumber\\
\leq& K  \|y(t_0)\| e^{-\lambda(t_1-t_0)}+ K L_f\int_{t_0}^{t_1}e^{-\lambda(t_1-s)}\|x(s)-x^*(s)\|\mathrm{d}s\nonumber\\
\leq& K   (\|y(t_0)\|+\epsilon) e^{-\lambda(t_1-t_0)}+ K L_f N(\|y(t_0)\|+\epsilon) \int_{t_0}^{t_1}e^{-\lambda(t_1-s)}e^{-\eta(s-t_0)}\mathrm{d}s  \nonumber\\
\leq& K   (\|y(t_0)\|+\epsilon) e^{-\lambda(t_1-t_0)}+K L_f N(\|y(t_0)\|+\epsilon)e^{-\eta(t_1-t_0)} \int_{t_0}^{t_1}e^{-(\lambda-\eta)(t_1-s)}\mathrm{d}s  \nonumber\\
\leq& K   (\|y(t_0)\|+\epsilon) e^{-\lambda(t_1-t_0)}+  K L_f N(\|y(t_0)\|+\epsilon)e^{-\eta(t_1-t_0)} \frac{1}{\lambda-\eta}\big[1- e^{-(\lambda-\eta)(t_1-t_0)}\big]\\
 \leq& N(\|y(t_0)\|+\epsilon)e^{-\eta(t_1-t_0)}\bigg(\frac{K e^{-(\lambda-\eta)(t_1-t_0)}}{N} + \frac{K L_f}{\lambda-\eta}\big[1- e^{-(\lambda-\eta)(t_1-t_0)}\big]   \bigg)   \nonumber\\
   \leq& N(\|y(t_0)\|+\epsilon)e^{-\eta(t_1-t_0)}\bigg[K e^{-(\lambda-\eta)(t_1-t_0)}\bigg(\frac{1}{N}-\frac{L_f}{\lambda-\eta}\bigg) + \frac{K L_f}{\lambda-\eta} \bigg]   \nonumber\\
   <& N(\|y(t_0)\|^p+\epsilon)e^{-\eta(t_1-t_0)},
\end{align*}
which contradicts \eqref{st2}. Hence,  \eqref{st1} holds. The proof is complete.
\end{proof}
 Similarly, one can prove that
\begin{theorem}\label{thm3}
Let $\mu, \nu\in \mathcal{M}_P$. Assume that  conditions $(B_1)$, $(B_2)$, $(S_1)$, $(S_2)$, $(H_1)$,  $(H_2)$, $(H_5)$ and $(H_6)$ hold. Then, the unique $p$-th Weyl $(\mu,\nu)$-pseudo almost automorphic mild solution
of \eqref{e1} is globally exponentially stable.
\end{theorem}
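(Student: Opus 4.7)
The plan is to mirror the argument of Theorem \ref{th51} line by line, since the two hypotheses $(H_4)$ and $(H_5)$ share the same global Lipschitz estimate on $f$ with the same constant $L_f$. First, Theorem \ref{th43} supplies the unique $p$-th Weyl $(\mu,\nu)$-pseudo almost automorphic mild solution $x^*$ of \eqref{e1}. Given an arbitrary mild solution $x$ with initial value $x(t_0)$, set $y=x-x^*$; by Definition \ref{def41}, the difference $y$ satisfies exactly the same variation-of-constants identity \eqref{st} used in the proof of Theorem \ref{th51}, namely
\begin{align*}
y(t)=\mathcal{U}(t,t_0)y(t_0)+\int_{t_0}^{t}\mathcal{U}(t,s)\bigl[f(s,x(s))-f(s,x^*(s))\bigr]\mathrm{d}s,\quad t\geq t_0.
\end{align*}

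Next, I would introduce the same auxiliary scalar function $S(\theta)=\lambda-\theta-\Theta L_f$. Condition $(H_6)$ gives $\Theta<1$, hence $S(0)=\lambda-\Theta L_f>0$, so $S$ possesses a unique positive zero $\theta^*=\lambda-\Theta L_f$, and one can fix any $\eta\in(0,\min\{\lambda,\theta^*\})$ so that $\frac{\Theta L_f}{\lambda-\eta}<1$. Setting $N=\lambda/L_f$, the bound $K\geq 1$ together with $(H_6)$ forces $N>1$, and therefore $\frac{1}{N}-\frac{L_f}{\lambda-\eta}<0$. These are exactly the structural inequalities that made the contradiction argument in Theorem \ref{th51} work, and they depend only on the constants $K$, $\lambda$, $L_f$, and the smallness condition $(H_6)$, all of which are unchanged here.

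The core step is then the standard first-exit argument: fix $\epsilon>0$ and suppose the desired exponential bound $\|y(t)\|<N(\|y(t_0)\|+\epsilon)e^{-\eta(t-t_0)}$ fails at some first point $t_1>t_0$. Apply $(H_1)$ to estimate $\|\mathcal{U}(t_1,t_0)y(t_0)\|\leq K(\|y(t_0)\|+\epsilon)e^{-\lambda(t_1-t_0)}$, use the Lipschitz bound on $f$ contained in $(H_5)$ to estimate the integrand by $KL_f e^{-\lambda(t_1-s)}\|y(s)\|$, and plug in the assumed bound on $\|y(s)\|$ for $s<t_1$. Carrying out the elementary $\int_{t_0}^{t_1}e^{-(\lambda-\eta)(t_1-s)}\mathrm{d}s$ integration and regrouping produces the same chain of inequalities as in Theorem \ref{th51}, ending with a strict inequality $\|y(t_1)\|<N(\|y(t_0)\|+\epsilon)e^{-\eta(t_1-t_0)}$ that contradicts the definition of $t_1$. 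Letting $\epsilon\to 0^+$ gives exponential stability with rate $\eta$ and constant $N$.

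The only potential obstacle is cosmetic: one must verify that the Lipschitz constant $L_f$ appearing in $(H_5)$ (for the full nonlinearity $f$, not just its almost automorphic component $f_1$) is the one feeding into $\Theta$ in $(H_6)$; this is explicit in the statement of $(H_5)$, which imposes the same bound $\|f(t,x)-f(t,y)\|\leq L_f\|x-y\|$ as $(H_3)$ and $(H_4)$. Consequently no new analytic estimate is required, and the Weyl pseudo almost automorphic character of $x^*$ plays no role in the stability argument itself—it is needed only to guarantee existence and uniqueness via Theorem \ref{th43}.
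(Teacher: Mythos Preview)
Your proposal is correct and takes essentially the same approach as the paper: the paper's own proof of Theorem \ref{thm3} consists solely of the phrase ``Similarly, one can prove that,'' deferring entirely to the argument of Theorem \ref{th51}. Your write-up makes explicit exactly the adaptation the paper intends---replacing the appeal to Theorem \ref{th42} by Theorem \ref{th43} for existence, and noting that the Lipschitz bound on $f$ in $(H_5)$ coincides with that in $(H_4)$ so that the contradiction argument carries over verbatim.
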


\section{An example}
\setcounter{equation}{0}
\indent

In this section, we will invoke a concrete example to showcase that the hypotheses outlined in the main theorems are readily met.

Consider the following  partial differential equation with Dirichlet boundary conditions
\begin{align}\label{e31}\left\{\begin{array}{ll}
\displaystyle  \frac{\partial u(t, x)}{\partial t}
   =  \frac{\partial^{2}}{\partial x^{2}} u(t, x)-2 u(t, x)+\sin \left(\frac{1}{2+\cos t+\cos \sqrt{3} t}\right) u(t, x)  \\
 \quad  \quad  \quad \quad \,\,\, \,\,+\displaystyle\frac{1}{10}\left[\cos \left(\frac{1}{2+\sin t+\sin \sqrt{7} t}\right)+3e^{-|t|}+\frac{2}{1+t^2}\right] \sin u(t, x) \\
 \quad \quad  \quad  \,\,\,\,\, \quad+\frac{1}{5}[2+\sin u(t,x)]\arctan t, \,\,\quad t \in \mathbb{R},\,\,  x \in[0, \pi],  \\
  u(t, 0)= u(t, \pi)=0, \quad \,\, t \in \mathbb{R}.\end{array}\right.
\end{align}

Take
$\mathbb{X}=\mathcal{L}^{2}[0, \pi]$ with norm
$\|\cdot\|$ and inner product
$(\cdot, \cdot)$.
Define a linear operator
$A:D(A) \subset \mathbb{X} \rightarrow \mathbb{X}$
as
\begin{equation*}
  A u=\frac{\partial^{2} u}{\partial x^{2}}-2 u,
\end{equation*}
where
\begin{align*}
  D(A) =\big\{ u(\cdot) \in \mathbb{X}: u^{\prime \prime} \in \mathbb{X}, u^{\prime} \in \mathbb{X} \text { is absolutely continuous on } [0, \pi],\,\, u(0)=u(\pi)=0\big\}.
\end{align*}
According to \cite{ref26}, we know that $A$ is self-adjoint, with compact resolvent and is the infinitesimal generator of an analytic semigroup
$\{T(t)\}_{t \geq 0}$ on $\mathbb{X}$ satisfying
\begin{equation*}
  \|T(t)\| \leq e^{-3 t} \quad \text { for }\quad t>0.
\end{equation*}
In addition,
\begin{equation*}
  T(t) u=\sum\limits_{n=1}^{+\infty} e^{\left(-n^{2}+2\right) t}\left(u, v_{n}\right) v_{n},\,\, t \geq 0,\,\, u \in \mathbb{X},
\end{equation*}
where
$v_{n}(u)=\sqrt{\frac{2}{\pi}} \sin (n u)$.
Consider a family of linear operators
$A(t)$ defined by
\begin{equation*}
  A(t) u(x)=\left(A+\sin \left(\frac{1}{2+\cos t+\cos \sqrt{3} t}\right)\right) u(x), \,\,  x \in[0, \pi],\,\, u \in D(A(t)),
\end{equation*}
where $D(A)=D(A(t))$.
Then, the following Cauchy problem
\begin{align*}\left\{\begin{array}{ll}
  u^{\prime}(t)=A(t) u(t), \,\, t>s,\\
  u(s)=u \in \mathbb{X}\end{array}\right.
\end{align*}
has an associated evolution family
$\{\mathcal{U}(t, s)\}_{t \geq s}$  on
$\mathbb{X}$,
which can be explicitly given by
\begin{equation*}
  \mathcal{U}(t, s) u=\left(T(t-s) e^{\int_{s}^{t} \sin \left(\frac{1}{2+\cos \tau+\cos \sqrt{3} \tau}\right)d \tau} \right) u.
\end{equation*}
For every sequence $\{\gamma_{n}\}\subset \mathbb{R}$, we have
\begin{align*}
 &\|\mathcal{U}(t+\gamma_{n},s+\gamma_{n})-\mathcal{U}(t,s)\|\\
 \leq& e^{-(t-s)}\bigg|\sin \left(\frac{1}{2+\cos (\xi+\gamma_{n})+\cos \sqrt{3} (\xi+\gamma_{n})}\right)-\sin \left(\frac{1}{2+\cos \xi+\cos \sqrt{3} \xi}\right)\bigg|,
\end{align*}
where $\xi=\xi(s,t) \in [s,t]$.
Since $\sin \Big(\frac{1}{2+\cos t+\cos \sqrt{3} t}\Big)$ is almost automorphic, $\mathcal{U}(t,s)$ is bi-almost automorphic;
moreover, we have
\begin{equation*}
  \|\mathcal{U}(t, s)\| \leq e^{-2(t-s)} \quad \text { for all}\quad t \geq s.
\end{equation*}
Thus, $(H_1)$ and $(H_2)$ are verified. Also,
according to \cite{ex2}, we see that $A(t)$ satisfies conditions $(S_1)$ and $(S_2)$.

Let
\begin{equation*}
f(t, u(x))=f_1(t,u(x))+f_2(t,u(x)),
\end{equation*}
where
\begin{align*}
  f_1(t,u(x))=&\frac{1}{10}\left[\cos \left(\frac{1}{2+\sin t+\sin \sqrt{7} t}\right)+3e^{-|t|}+\frac{2}{1+t^2}\right] \sin u(x),\\
  f_2(t,u(x))=&\frac{1}{5}[2+\sin u(x)]\arctan t,
\end{align*}
then
\eqref{e31}
can be transformed  into the abstract equation \eqref{e1}.

Take the Radon-Nikodym derivatives of $\mu$ and $\nu$ are $\cos^2 t+2$ and $e^{|t|}+3+\cos t$, respectively. Then, we have
\begin{align*}
 \limsup\limits_{l\rightarrow\infty}\frac{\mu([-l,l])}{\nu([-l,l])}=\limsup\limits_{l\rightarrow\infty}\frac{\int_{-l}^{l}(\cos^2t+2)\mathrm{d}t}{\int_{-l}^{l}(e^{|t|}+3+\cos t)\mathrm{d}t}<\infty
\end{align*}
and
\begin{align*}
  \mu_\omega(A)=\int_{\omega+A}(\cos^2t+2)\mathrm{d}t\leq 3 \mu (A)
\end{align*}
for all $\omega\in \mathbb{R}$ and $A\in \Sigma$. Hence,
  conditions $(B_1)$ and $(B_2)$ are fulfilled.

 Since  $\|2e^{-|t|}\|_{W^p}=0$ and $\|\frac{2}{1+t^2}\|_{W^p}=0$, $\cos \Big(\frac{1}{2+\sin t+\sin \sqrt{7} t}\Big)$ is almost automorphic. Hence,    $f_1  \in W^p_{AA}(\mathbb{R}\times \mathbb{X},\mathbb{X})$.
Note that
\begin{align*}
& \lim\limits_{l\rightarrow +\infty}\frac{1}{\nu([-l,l])}\int_{-l}^{l} \bigg(\lim\limits_{h\rightarrow +\infty}\frac{1}{h}\int_{\theta}^{\theta+h}\bigg|\frac{1}{5}[2+\sin u(x)]\arctan s\bigg|^pds \bigg)^{\frac{1}{p}}d\mu(\theta)\\
 \leq& \frac{3\pi}{10}\lim\limits_{l\rightarrow\infty}\frac{\int_{-l}^{l}(\cos^2t+2)\mathrm{d}t}{\int_{-l}^{l}(e^{|t|}+3+\cos t)\mathrm{d}t}=0.
\end{align*}
Thus, $f_2\in \mathcal{O}^p(\mathbb{R}\times \mathbb{X},\mathbb{X},\mu,\nu)$. Therefore, condition $(H_5)$ and $(H_6)$ are verified  with $M_f=\frac{3}{5},K=1,$ $\lambda=2$ and $L_f=\frac{6+\pi}{10}$.

Consequently, by Theorem \ref{th51}, system \eqref{e31} has a unique $p$-th Weyl $(\mu,\nu)$-pseudo almost automorphic mild solution that is globally exponentially stable.

\section*{Conflict of interest statement}

The author declares that he has no conflicts of interest.

\section*{Data availability statement}

Data sharing is not applicable to this article as no datasets were generated or analyzed
during this study.

\end{document}